\newtheorem{theorem}{Theorem}
\newtheorem{proposition}[theorem]{Proposition}
\newtheorem{lemma}[theorem]{Lemma}
\newtheorem{definition}[theorem]{Definition}
\newtheorem*{theorem*}{Theorem}
\def\Xint#1{\mathchoice
{\XXint\displaystyle\textstyle{#1}}%
{\XXint\textstyle\scriptstyle{#1}}%
{\XXint\scriptstyle\scriptscriptstyle{#1}}%
{\XXint\scriptscriptstyle\scriptscriptstyle{#1}}%
\!\int}
\def\XXint#1#2#3{{\setbox0=\hbox{$#1{#2#3}{\int}$ }
\vcenter{\hbox{$#2#3$ }}\kern-.6\wd0}}
\def\dashint{\Xint-}
\newcommand{\Id}{\operatorname{Id}} 
\newcommand{\supp}{\operatorname{supp}}
\newcommand{\dist}{\operatorname{dist}}
\newcommand{\RVE}{{\operatorname{RVE}}}
\newcommand{\Var}{{\operatorname{Var}~}}
\newcommand{\Cov}{\operatorname{Cov}}
\newcommand{\osc}{{\operatorname{osc}}}
\newcommand{\shom}{{\mathsf{hom}}}
\newcommand{\per}{{\mathsf{per}}}
\definecolor{Yellow}{rgb}{0.95,0.9,0.0} 
\definecolor{Red}{rgb}{0.8,0.1,0.1}
\definecolor{Green}{rgb}{0.1,0.65,0.2}
\definecolor{Blue}{rgb}{0.1,0.1,0.8}
\definecolor{Purple}{rgb}{0.7,0.1,0.7}
\definecolor{Grey}{rgb}{0.6,0.6,0.6}
\begin{document}

\title[Normal approximation for sums with multilevel dependence]{Quantitative normal approximation for sums of random variables with multilevel local dependence structure}

\author{Julian Fischer}
\address{Institute of Science and Technology Austria (IST Austria),
Am Campus 1, 3400 Klosterneuburg, Austria, E-Mail:
julian.fischer@ist.ac.at}

\begin{abstract}
We establish a quantitative normal approximation result for sums of random variables with multilevel local dependencies. As a corollary, we obtain a quantitative normal approximation result for linear functionals of random fields which may be approximated well by random fields with finite dependency range. Such random fields occur for example in the homogenization of linear elliptic equations with random coefficient fields. In particular, our result allows for the derivation of a quantitative normal approximation result for the approximation of effective coefficients in stochastic homogenization in the setting of coefficient fields with finite range of dependence. The proof of our normal approximation theorem is based on a suitable adaption of Stein's method and requires a different estimation strategy for terms originating from long-range dependencies as opposed to terms stemming from short-range dependencies.
\end{abstract}

\keywords{normal approximation, Stein's method, local dependence}

\thanks{This project was initiated while the author enjoyed the hospitality of the Hausdorff Research Institute for Mathematics, Bonn, as a participant of the Trimester Program ``Multiscale Problems: Algorithms, Numerical Analysis and Computation''. The author would like to thank Mitia Duerinckx and Antoine Gloria for interesting discussions on the manuscript.}

\maketitle

\section{Introduction}

Stein's method of normal approximation \cite{SteinOriginal,Stein} -- as well as Chatterjee's variant \cite{Chatterjee,Chatterjee2} -- are among the most widely used methods for the derivation of quantitative estimates on the deviation of a probability distribution from a Gaussian.
Stein's method is remarkably flexible, being applicable to the multivariate setting \cite{Goetze,GoldsteinRinott,RinottRotar,ChatterjeeMeckes,ReinertRoellin,
ChenGoldsteinShao} and to the setting of sums of ``locally dependent'' random variables
\cite{Stein,BaldiRinott,BaldiRinottStein,Rinott,ChenShao,RinottRotar3,
ChenGoldsteinShao}. A survey of quantitative normal approximation by Stein's method may be found e.\,g.\ in \cite{ChenGoldsteinShao,RossSurvey}.

In recent years, Chatterjee's variant \cite{Chatterjee,Chatterjee2} of Stein's method in the form of second-order Poincar\'e inequalities has found widespread use: For Gaussian random fields second-order Poincar\'e inequalities have been established in \cite{Chatterjee2,NourdinPeccatiReinert}, while for discrete probability distributions with product structure (like Poisson point processes) this has been accomplished in \cite{Chatterjee,LachiezeReyPeccati}. Random geometries like random sequential adsorption processes or Voronoi and Delaunay tesselations for random (e.\,g.\ Poisson) point distributions have been considered in \cite{PenroseYukich,SchreiberPenroseYukich,LastPeccatiSchulte,
DuerinckxGloriaPoincare}.

In the present work, we derive a result on normal approximation for random variables that arise from random fields with finite range of dependence, featuring multilevel dependencies (with strong local dependencies and decaying dependencies on larger dependency ranges). The lack of a (hidden) product structure -- as present in the abovementioned applications of Chatterjee's variant of Stein's method -- prevents us from using Chatterjee's variant of Stein's method. Instead, our approach is closer to the original version of Stein's method respectively the results on sums of random variables with local dependence structure \cite{Rinott,ChenShao,RinottRotar}.

The quantitative normal approximation results obtained in the present contribution have an interesting consequence for the homogenization of elliptic equations with random coefficient field: While so far the quantitative description of fluctuations had been limited to the setting of probability distributions of coefficient fields subject to a second-order Poincar\'e inequality \cite{Nolen,GloriaNolen,GuMourrat,MourratNolen,DuerinckxGloriaOtto}, our result enables us to develop a first quantitative description of certain fluctuations in stochastic homogenization -- namely, the fluctuations of the \emph{effective conductivity} of random periodic materials -- under the assumption of finite range of dependence \cite{FischerVarianceReduction}. It has also an interesting consequence for the numerical analysis of an algorithm by Le~Bris, Legoll, and Minvielle \cite{LeBrisLegollMinvielle} capable of increasing the accuracy of approximations for the effective conductivity, see \cite{FischerVarianceReduction}.
In fact, the results of the present work will enable us to develop a complete quantitative description of fluctuations in linear elliptic PDEs with random coefficient field in the case of finite range of dependence, see the forthcoming paper \cite{DuerinckxFischerGloriaOtto}.

\subsection{Random fields in stochastic homogenization}

Let us give a few remarks on random fields in stochastic homogenization, from which the present work draws its main motivation.
The solution $u\in H^1_0(\mathbb{R}^d)$ to a linear elliptic equation with a random coefficient field $a:\mathbb{R}^d \times \Omega \rightarrow \mathbb{R}^{d\times d}$
\begin{align}
\label{EllipticPDE}
-\nabla \cdot (a\nabla u)=f
\end{align}
(with a deterministic right-hand side $f\in L^2(\mathbb{R}^d)$) may -- provided that the random coefficient field $a$ is subject to e.\,g.\ the assumptions of uniform ellipticity and boundedness, stationarity, and finite range of dependence $\varepsilon\leq \frac{1}{2}$ -- be approximated by the solution to an \emph{effective equation} of the form
\begin{align*}
-\nabla \cdot (a_\shom \nabla u_\shom) = f,
\end{align*}
where $a_\shom \in \mathbb{R}^{d\times d}$ is a constant (elliptic) effective coefficient which depends on the law of $a$ (but is invariant under spatial rescaling).
Namely, one has the following quantitative estimate \cite{ArmstrongKuusiMourrat,GloriaNeukammOttoTwoScale,GloriaOttoNew}: The difference of the solutions $u$ and $u_\shom$ is bounded for a suitable $p(d)\geq 2$ by
\begin{align*}
||u-u_\shom||_{L^p(\mathbb{R}^d)} \leq 
\begin{cases}
\mathcal{C}(a,f) ||f||_{L^2} \varepsilon |\log \varepsilon|&\text{for }d=2,
\\
\mathcal{C}(a,f) ||f||_{L^2} \varepsilon &\text{for }d\geq 3,
\end{cases}
\end{align*}
where $\mathcal{C}(a,f)$ denotes a random constant with almost Gaussian stochastic moments
\begin{align*}
\mathbb{E}\bigg[\exp\bigg(\frac{\mathcal{C}(a,f)^{2-\delta}}{C(d,\lambda,\delta)}\bigg)\bigg]\leq 2.
\end{align*}
Note that the fluctuations of functionals of the solution $u$ like $\int u \eta \,dx$ for a smooth compactly supported $\eta\in C^\infty_{cpt}(\mathbb{R}^d)$ are expected to be of the order of the central limit theorem scaling $\varepsilon^{d/2}$. This suggests that for $d\geq 3$ it should be possible to achieve a higher-order deterministic approximation for $u$. As shown in \cite{BellaFehrmanFischerOtto,BenoitGloria,Gu}, in weaker norms one may in fact obtain such a higher-order approximation for $u$ in terms of solutions to deterministic equations, based on the concept of higher-order homogenization correctors (which also plays an important role in higher-order regularity theory for elliptic equations with periodic or random coefficient field, see \cite{AvellanedaLin,FischerOtto}).

However, to achieve a description of the fluctuations of the solution $u$ a different approach is required. A description of the correlation structure of fluctuations of the homogenization corrector $\phi_i$ (see below for a definition) had been obtained in \cite{MourratOtto}; subsequently, in \cite{GuMourrat} a description of the correlation structure of fluctuations in the solutions $u$ to the equation \eqref{EllipticPDE} was given. The first quantitative normal approximation result in the context of stochastic homogenization was obtained for the effective conductivity on the torus in \cite{Nolen} and subsequently \cite{GloriaNolen}, following earlier qualitative results in \cite{BiskupSalviWolff,Rossignol}; for linear functionals of the solution $u$ of the form $\int u \eta \,dx$ it was obtained in \cite{MourratNolen}.

A fundamental object in homogenization is the \emph{homogenization corrector} $\phi_i$, defined as the unique (up to addition of constants) solution with sublinear growth to the PDE
\begin{align*}
-\nabla \cdot (a(e_i+\nabla \phi_i))=0.
\end{align*}
The effective coefficient $a_\shom$ is determined by the homogenization corrector as
\begin{align}
\label{ahom}
a_\shom e_i:=\mathbb{E}[a(e_i+\nabla \phi_i)].
\end{align}

To faciliate the description of fluctuations in stochastic homogenization, in \cite{DuerinckxGloriaOtto} the concept of the \emph{homogenization commutator}\begin{align*}
\Xi_{ij} := (a-a_\shom)(e_i+\nabla \phi_i) \cdot e_j
\end{align*}
has been introduced (bearing its name due to the similarity of the expression with a commutator); see also \cite{ArmstrongKuusiMourrat} for a related energy quantity. The homogenization commutator $\Xi$ has strong locality properties: Under a suitable rescaling it converges to essentially white noise. Furthermore, it allows for the description of fluctuations of solutions to equations of the form $-\nabla \cdot (a\nabla u)=\nabla \cdot g$: Introducing the solutions $u_\shom$ to the homogenized equation $-\nabla \cdot (a_\shom\nabla u_\shom)=\nabla \cdot g$ and the solution $v_\shom$ to the equation $-\nabla \cdot (a_\shom^*\nabla v_\shom)=\nabla \cdot \eta$, one has
\begin{align*}
\int \eta \cdot (\nabla u-\mathbb{E}[\nabla u]) \,dx \approx \int \Xi\nabla u_\shom \cdot \nabla v_\shom \,dx
\end{align*}
up to an error of order $\varepsilon^{d/2+1}$ (which amounts to a relative error of order $\varepsilon$).
As shown in \cite{DuerinckxGloriaOtto} under the assumption of a second-order Poincar\'e inequality for the probability distribution of the coefficient field $a$, functionals of the homogenization commutator like $\int \Xi \eta \,dx$ display fluctuations on scale $\varepsilon^{d/2}$ while the distance of their probability distribution to a Gaussian is basically of order $\varepsilon^{d}$ (e.\,g.\ in the $1$-Wasserstein distance), amounting to a relative error of order $\varepsilon^{d/2}$. The results of our present work will allow for an analogous result in the setting of coefficient fields with finite range of dependence, see the upcoming work \cite{DuerinckxFischerGloriaOtto}.

To obtain an approximation for the effective coefficient $a_\shom$ which is given analytically by \eqref{ahom}, one typically considers a \emph{periodization} of the random coefficient field, that is an $L\varepsilon$-periodic random coefficient field $a_{\per,L}$ whose law on each cube of diameter $\frac{L\varepsilon}{2}$ coincides with the law of the original coefficient field $a$ on the same cube\footnote{Note that the existence of such a periodization is a priori unclear; even if it exists, it must be constructed on a case-by-case basis depending on the probability distribution of $a$.}.
For such a periodization, the homogenization corrector $\phi_{\per,L,i}$ (defined as the $L\varepsilon$-periodic solution to the PDE $-\nabla \cdot (a_{\per,L}(e_i+\nabla \phi_{\per,L,i}))=0$) is an $L\varepsilon$-periodic function and one may for a single realization compute an approximation for the effective coefficient $a_\shom$ according to
\begin{align}
\label{aRVE}
a^\RVE e_i := \dashint_{[0,L\varepsilon]^d} a_{\per,L} (e_i+\nabla \phi_{\per,L,i}) \,dx.
\end{align}
The error of this approximation is dominated by the fluctuations of $a^\RVE$, which are of the order
\begin{align*}
\mathbb{E}\big[\big|a^\RVE-\mathbb{E}[a^\RVE]\big|^2\big]^{1/2} \leq C L^{-d/2}
\end{align*}
(see \cite{GloriaOtto,GloriaOtto2,GloriaOttoNew}),
while the systematic error is of higher order
\begin{align*}
\big|\mathbb{E}[a^\RVE]-a_\shom\big| \leq C L^{-d} |\log L|^d.
\end{align*}
As shown in \cite{FischerVarianceReduction}, the quantitative result on normal approximation established in the present work has an interesting consequence: It facilitates a rigorous mathematical analysis of the \emph{selection approach for representative volumes} introduces by Le~Bris, Legoll, and Minvielle \cite{LeBrisLegollMinvielle}. The selection approach of Le~Bris, Legoll, and Minvielle is a remarkably successful numerical algorithm for increasing the accuracy of the approximations for effective coefficients: It basically proceeds by selecting not a random sample of the coefficient field $a_\per$ for the computation of the approximation $a^\RVE$ for the effective coefficient \eqref{aRVE}, but a sample of the coefficient field that is ``particularly representative'' in the sense that it captures certain statistical properties of the random coefficient field -- like the spatial average $\dashint_{[0,L\varepsilon)^d} a \,dx$ -- exceptionally well. It has been observed numerically by Le~Bris, Legoll, and Minvielle that their selection approach achieves its gain in accuracy by reducing the fluctuations of the approximations \cite{LeBrisLegollMinvielle}. In \cite{FischerVarianceReduction}, it is shown that the approximate multivariate normality of the joint probability distribution of $a^\RVE$ and $\dashint_{[0,L\varepsilon)^d} a \,dx$ -- which is established using Theorem~\ref{TheoremNormalApproximationMultilevelLocalDependence} below -- allows for a rigorous analysis of the selection approach. In fact, it is this mathematical application that has dictated our choice of the distance between probability distributions in Definition~\ref{DefinitionDistance}.

{\bf Notation.}
For a vector $v\in \mathbb{R}^m$ we denote by $|v|$ its Euclidean norm; the vectors of the standard basis are denoted by $e_i$, $1\leq i\leq m$. We denote the identity matrix in $\mathbb{R}^{N\times N}$ by $\Id$ or $\Id_N$. For a matrix $A\in \mathbb{R}^{m\times m}$ we shall denote by $|A|$ its natural norm $|A|:=\max_{v,w\in \mathbb{R}^m,|v|=|w|=1} |v\cdot A w|$. Similarly, on the space of tensors $B\in \mathbb{R}^{m\times m\times m}$ we shall use the norm given by $|B|:=\max_{u,v,w\in \mathbb{R}^m,|u|=|v|=|w|=1} \sum_{i,j,k=1}^m B_{ijk}u_i v_j w_k$. For $x\in \mathbb{R}^d$ we denote by $|x|_\infty=\max_i |x_i|$ its supremum norm. By $|x-y|_\per:=\inf_{k\in \mathbb{Z}^d} |x-y-Lk|$ respectively (for sets) $\dist^\per(U,V):=\inf_{k\in \mathbb{Z}^d} \dist(U,k+V)$, we denote the periodicity-adjusted distance in the context of the torus $[0,L]^d$. By $|x-y|_{\per,\infty}$ and $\dist^\per_\infty$, we denote the corresponding distances associated with the maximum norm.

Given a positive definite symmetric matrix $\Lambda\in \mathbb{R}^{N\times N}$, we denote the Gaussian with covariance matrix $\Lambda$ by
\begin{align*}
\mathcal{N}_{\Lambda}(x):=\frac{1}{(2\pi)^{N/2}\sqrt{\det \Lambda}}
\exp\bigg(-\frac{1}{2}\Lambda^{-1} x \cdot x\bigg).
\end{align*}
For $\gamma>0$, we equip the space of random variables $X$ with stretched exponential moment $\mathbb{E}[\exp(|X|^\gamma/a)]<\infty$ for some $a=a(X)>0$ with the norm $||X||_{\exp^\gamma}:=\sup_{p\geq 1} p^{-1/\gamma} \mathbb{E}[|X|^p]^{1/p}$. For a discussion of this choice of norm, see Appendix~\ref{AppendixStretchedExponential}.

For a map $f:\mathbb{R}^N\rightarrow V$ into a normed vector space $V$, we denote for any $r>0$ by $\osc_r f(x_0):=\sup_{x,y\in \{|x-x_0|\leq r\}} |f(x)-f(y)|_V$ its oscillation in the ball of radius $r$ around $x_0$.

For two random variables $X$ and $Y$ (possibly defined on different probability spaces), we denote equality in law by $X\stackrel{d}{=}Y$. For a vector-valued random variable $X$, we denote by $\Var X$ the covariance matrix of its entries. Similarly, for two vectors $X$ and $Y$ we denote by $\Cov[X,Y]$ the matrix of covariances of the entries of $X$ and the entries of $Y$. Given a condition like $f\leq b$ for some random variable $f$ and some $b\in \mathbb{R}$, we denote by $\chi_{f\leq b}$ the characteristic function of the set $\{\omega\in \Omega:f(\omega)\leq b\}$.

On the space of bounded fields $v:\mathbb{R}^d \rightarrow \mathbb{R}^N$ we shall use the $L^p_{loc}(\mathbb{R}^d)$ topology for any $1<p<\infty$. By slight abuse of notation, by $\mathcal{W}_1(X,Y)$ respectively $\mathcal{W}_1(X,\mathcal{N}_\Lambda)$ we denote the $1$-Wasserstein distance between the law of the two random variables $X$ and $Y$ respectively between the law of the random variable $X$ and the Gaussian $\mathcal{N}_\Lambda$.

For two subsets $U$ and $V$ of $\mathbb{R}^k$, we denote as usual by $U+V$ the Minkowski sum $U+V:=\{y+z:y\in U,z\in V\}$. Similarly, for a subset $U\subset \mathbb{R}^k$, a vector $x\in \mathbb{R}^k$, and a scalar $\lambda>0$, we denote by $x+U$ the translation of $U$ by $x$ and by $\lambda U$ the set $\lambda U:=\{\lambda y:y\in U\}$. By $B_r$ we denote the ball of radius $r$ around $0$.

\section{Main Results}

Throughout our present work, the following assumption of \emph{finite range of dependence} is the central assumption on the random fields.
\begin{itemize}
\item[(A)] Let $(\Omega,\mathcal{F},\mathbb{P})$ be a probability space and let $a:\mathbb{R}^d \times \Omega\rightarrow \mathbb{R}^n$ (with $d,n\in\mathbb{N}$) be a random field. We say that $a$ has range of dependence $1$ if for any two measurable sets $U,V\subset\mathbb{R}^d$ with $\dist(U,V)>1$ the restrictions $a|_U$ and $a|_V$ are stochastically independent.
\item[(A')] Let $(\Omega,\mathcal{F},\mathbb{P})$ be a probability space and let $a:\mathbb{R}^d \times \Omega\rightarrow \mathbb{R}^n$ (with $d,n\in\mathbb{N}$) be an almost surely $L$-periodic random field for some $L\geq 1$. We say that $a$ has range of dependence $1$ if for any two measurable sets $U,V\subset\mathbb{R}^d$ with $\dist_\per(U,V)>1$ the restrictions $a|_U$ and $a|_V$ are stochastically independent.
\end{itemize}

We will establish two main results: A quantitative multivariate normal approximation result for linear functionals of random fields that admit a good approximation in terms of finite-range random fields on the one hand (see Theorem~\ref{TheoremRandomField}), as well as a quantitative multivariate normal approximation result for sums of random variables with multilevel local dependence on the other hand (see Theorem~\ref{TheoremNormalApproximationMultilevelLocalDependence}).

The distance of these probability distributions to a multivariate Gaussian will be quantified through the following notion of distance between probability measures. Note that this distance is a standard choice in the theory of multivariate normal approximation, see e.\,g.\ \cite{ChenGoldsteinShao} and the references therein. Note also that our choice of distance $\mathcal{D}$ dominates the $1$-Wasserstein distance.

\begin{definition}
\label{DefinitionDistance}
For a symmetric positive definite matrix $\Lambda\in \mathbb{R}^{N\times N}$ and $\bar L<\infty$, we consider the classes $\Phi_{\Lambda}^{\bar L}$ of functions $\phi:\mathbb{R}^N\rightarrow \mathbb{R}$ with the following properties:
\begin{itemize}
\item $\phi$ is smooth and its first derivative is bounded in the sense $|\nabla \phi(x)| \leq \bar L$ for all $x\in \mathbb{R}^N$.
\item For any $r>0$ and any $x_0\in \mathbb{R}^N$, we have
\begin{align}
\label{DefinitionClassPhi}
\int_{\mathbb{R}^N} \osc_r \phi(x) ~\mathcal{N}_{\Lambda}(x-x_0) \,dx \leq r,
\end{align}
where $\osc_r \phi (x)$ is the oscillation of $\phi$ defined as
\begin{align*}
\osc_r\phi(x):=\sup_{|z|\leq r}\phi(x+z)-\inf_{|z|\leq r} \phi(x+z)
\end{align*}
and where 
\begin{align*}
\mathcal{N}_{\Lambda}(x):=\frac{1}{(2\pi)^{N/2}\sqrt{\det \Lambda}}
\exp\bigg(-\frac{1}{2}\Lambda^{-1} x \cdot x\bigg).
\end{align*}
\end{itemize}
The class $\Phi_\Lambda$ is defined as
\begin{align*}
\Phi_\Lambda:=\bigcup_{\bar L>0} \Phi_\Lambda^{\bar L}.
\end{align*}

Furthermore, we introduce the distance $\mathcal{D}$ and the regularized distance $\mathcal{D}^{\bar L}$ between the law of an $\mathbb{R}^N$-valued random variable $X$ and the $N$-variate Gaussian $\mathcal{N}_\Lambda$ as
\begin{align}
\label{DefinitionD}
\mathcal{D}(X,\mathcal{N}_\Lambda) := \sup_{\phi\in \Phi_\Lambda} \bigg(\mathbb{E}[\phi(X)]-\int_{\mathbb{R}^N} \phi(x) \mathcal{N}_\Lambda(x)\,dx \bigg)
\end{align}
and
\begin{align}
\label{DefinitionDL}
\mathcal{D}^{\bar L}(X,\mathcal{N}_\Lambda) := \sup_{\phi\in \Phi_\Lambda^{\bar L}} \bigg(\mathbb{E}[\phi(X)]-\int_{\mathbb{R}^N} \phi(x) \mathcal{N}_\Lambda(x)\,dx \bigg).
\end{align}
\end{definition}
By the definition of our distance $\mathcal{D}$, the regularization $|\nabla\phi|\leq \bar L$ in the definition of the test functions for the distance $\mathcal{D}^{\bar L}$ may be removed by letting $\bar L\rightarrow\infty$. We shall prove most of our statements first in the regularized setting $\bar L<\infty$ -- note that for random variables $X$ with finite first moment the distance $\mathcal{D}^{\bar L}$ is guaranteed to be finite -- and then extend them to $\mathcal{D}$ by passing to the limit $\bar L\rightarrow\infty$.

Note that for $\bar L>1$ the distance $\mathcal{D}^{\bar L}$ is a stronger distance than the $1$-Wasserstein distance (while for $\bar L=1$ it coincides with the $1$-Wasserstein distance).

Let us also remark that \eqref{DefinitionClassPhi} entails (by letting $r\rightarrow 0$) the bound
\begin{align}
\label{ClassPhiDifferentialBound}
\int_{\mathbb{R}^N} |\nabla \phi|(x) \mathcal{N}_{\Lambda}(x-x_0) \,dx \leq 1
\end{align}
for any $x_0\in \mathbb{R}^N$.

For linear functionals of random fields $v$ which may be approximated well by random fields $v_r$ with finite range of dependence $r$ we establish the following normal approximation result.
\begin{theorem}
\label{TheoremRandomField}
Let $a:\mathbb{R}^d\times \Omega\rightarrow \mathbb{R}^n$, $d,n\in \mathbb{N}$, be a random field with finite range of dependence $1$ in the sense of (A). Let $v:\mathbb{R}^d\times \Omega\rightarrow \mathbb{R}^N$, $N\in \mathbb{N}$, be a random field with
\begin{align*}
\dashint_{\{|x-x_0|\leq 1\}} |v| \,dx \leq \mathcal{C}(a,x_0)
\end{align*}
for some random constant $\mathcal{C}(a,x_0)$ with stretched exponential stochastic moments $||\mathcal{C}(a,x_0)||_{\exp^\gamma}\leq 1$ for some $\gamma>0$ and any $x_0\in \mathbb{R}^d$.

Let $L\geq 1$ and $K\in \mathbb{N}$.
Suppose that there exists a family of random fields $v_r:\mathbb{R}^d\times \Omega\rightarrow \mathbb{R}^N$, $1\leq r\leq L$, such that $v_r$ is an $r$-local function of $a$ and an approximation for $v$ in the following sense:
\begin{itemize}
\item For any measurable $U\subset \mathbb{R}^d$ the restriction $v_r|_{U}$ is a measurable function of $a|_{U+B_r}$.
\item There exist random constants $\mathcal{C}(a,\psi)$ with stretched exponential stochastic moments $||\mathcal{C}(a,\psi)||_{\exp^\gamma}\leq 1$ such that for any $\psi \in W^{K,\infty}(\mathbb{R}^d)$ the estimate
\begin{align}
\label{AssumptionApproximationByFiniteRange}
\bigg|\int_{\mathbb{R}^d} (v-v_r) \psi \,dx\bigg|
\leq \mathcal{C}(a,\psi) r^{-d} \int_{\mathbb{R}^d} \sup_{y\in B_r(x)} \sum_{k=0}^K r^{k} |\nabla^k \psi|(y) \,dx
\end{align}
holds.
\end{itemize}
Then for any $\xi\in W^{K,\infty}(\mathbb{R}^d)$ subject to the estimate
$|\nabla^k \xi|\leq L^{-k} (1+|x|/L)^{-d-1-k}$ for all $0\leq k\leq K$
the random variable
\begin{align*}
X:=L^{-d} \int_{\mathbb{R}^d} \xi v \,dx
\end{align*}
admits the normal approximation
\begin{align*}
&\mathcal{D}(X-\mathbb{E}[X],\mathcal{N}_{\Var X})
\leq 
C(d,\gamma) (\log L)^{C(d,\gamma)} \big(L^{-d} |\Var X^{1/2}| |\Var X^{-1/2}|^3\big) L^{-d}.
\end{align*}
\end{theorem}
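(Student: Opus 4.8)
\emph{Strategy.} The plan is to expose the multilevel local dependence structure hidden inside $X$ by a dyadic decomposition built from the finite-range approximations $v_r$, and then to feed it into Theorem~\ref{TheoremNormalApproximationMultilevelLocalDependence}. As announced after Definition~\ref{DefinitionDistance}, I would first bound the regularized distance $\mathcal{D}^{\bar L}(X-\mathbb{E}[X],\mathcal{N}_{\Var X})$ uniformly in $\bar L<\infty$ and only at the end pass to the limit $\bar L\rightarrow\infty$ to conclude for $\mathcal{D}$. For the decomposition, pick $J\in\mathbb{N}$ minimal with $2^J\geq L$, set $w^{(0)}:=v_1$ and $w^{(j)}:=v_{2^j}-v_{2^{j-1}}$ for $1\leq j\leq J$, so that $v=\sum_{j=0}^{J}w^{(j)}+(v-v_{2^J})$. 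Each $w^{(j)}$ is a $C2^j$-local function of $a$, and telescoping \eqref{AssumptionApproximationByFiniteRange} (together with the assumed local $L^1$-bound on $v$, and on $v_1$, for $j=0$) gives for every $\psi\in W^{K,\infty}(\mathbb{R}^d)$
\begin{align*}
\bigg|\int_{\mathbb{R}^d}\psi\,w^{(j)}\,dx\bigg|\leq C\,\mathcal{C}(a,\psi)\,2^{-jd}\int_{\mathbb{R}^d}\sup_{y\in B_{2^j}(x)}\sum_{k=0}^{K}2^{jk}|\nabla^k\psi|(y)\,dx,
\end{align*}
and the analogous bound with $2^J$ in place of $2^j$ for the remainder $v-v_{2^J}$.

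\emph{The multilevel structure.} For each scale $j$ fix a partition of unity $\{\eta^{(j)}_m\}_{m\in\mathbb{Z}^d}$ subordinate to the cubes $x^{(j)}_m+[-2^j,2^j]^d$, where $x^{(j)}_m:=2^j m$, with $|\nabla^k\eta^{(j)}_m|\leq C2^{-jk}$, and set
\begin{align*}
X^{(j)}_m:=L^{-d}\int_{\mathbb{R}^d}\xi\,\eta^{(j)}_m\,w^{(j)}\,dx-\mathbb{E}\bigg[L^{-d}\int_{\mathbb{R}^d}\xi\,\eta^{(j)}_m\,w^{(j)}\,dx\bigg],
\end{align*}
so that $X-\mathbb{E}[X]=Y+R$ with $Y:=\sum_{j=0}^{J}\sum_{m\in\mathbb{Z}^d}X^{(j)}_m$ and the nonlocal remainder $R:=L^{-d}\int\xi(v-v_{2^J})\,dx-\mathbb{E}[\,\cdot\,]$. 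By construction $X^{(j)}_m$ is a measurable function of $a$ restricted to $B_{C2^j}(x^{(j)}_m)$, so $X^{(j)}_m$ and $X^{(j')}_{m'}$ are stochastically independent whenever $\dist(B_{C2^j}(x^{(j)}_m),B_{C2^{j'}}(x^{(j')}_{m'}))>1$ --- precisely a multilevel local dependence graph with dependency range $\sim 2^j$ at level $j$. Applying the estimate above with $\psi=\xi\,\eta^{(j)}_m$, whose rescaled derivatives satisfy $2^{jk}|\nabla^k(\xi\eta^{(j)}_m)|\lesssim\sum_{l\leq k}2^{jl}|\nabla^l\xi|\lesssim(1+|x|/L)^{-d-1}$ on $\supp\eta^{(j)}_m$ (using $2^j\leq 2L$ and the assumed decay of $\xi$), yields the uniform-in-$j$ moment bound
\begin{align*}
\big\|X^{(j)}_m\big\|_{\exp^\gamma}\leq C\,L^{-d}\,(1+|x^{(j)}_m|/L)^{-d-1}.
\end{align*}
Summing squares over $m$ and using that the lattice $2^j\mathbb{Z}^d$ carries $\sim(L/2^j)^d$ points within $B_L$ (with a convergent tail) produces the decisive geometric decay across levels,
\begin{align*}
\sum_{m\in\mathbb{Z}^d}\big\|X^{(j)}_m\big\|_{\exp^\gamma}^2\leq C\,L^{-d}\,2^{-jd}\qquad(0\leq j\leq J),
\end{align*}
which is exactly the summability hypothesis required by Theorem~\ref{TheoremNormalApproximationMultilevelLocalDependence}. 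That theorem, applied to $Y=\sum_{j,m}X^{(j)}_m$, then produces the bound $C(d,\gamma)(\log L)^{C(d,\gamma)}\big(L^{-d}|\Var Y^{1/2}||\Var Y^{-1/2}|^3\big)L^{-d}$ for $\mathcal{D}^{\bar L}(Y,\mathcal{N}_{\Var Y})$.

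\emph{Remainder, covariance, and passage to the limit.} Evaluating the displayed integral estimate with $\psi=\xi$ at scale $2^J\sim L$ (the decay of $\xi$ makes the weighted integral $\lesssim L^d$) shows $\|R\|_{\exp^\gamma}\leq CL^{-d}$; consequently $|\Var X-\Var Y|\leq 2|\Cov[Y,R]|+|\Var R|\lesssim|\Var Y|^{1/2}\|R\|_{\exp^\gamma}+\|R\|_{\exp^\gamma}^2\lesssim L^{-3d/2}$. I would then absorb $R$ by a perturbation argument --- estimating $\mathbb{E}|\phi(Y+R)-\phi(Y)|\leq\mathbb{E}[\osc_{|R|}\phi(Y)]$ for $\phi\in\Phi_{\Var X}^{\bar L}$ by combining the already-established closeness of the law of $Y$ to $\mathcal{N}_{\Var Y}$ with the oscillation bound \eqref{DefinitionClassPhi}, plus the crude bound $|\nabla\phi|\leq\bar L$ against the stretched exponential tail of $R$ on the complementary rare event --- and replace $\mathcal{N}_{\Var Y}$ by $\mathcal{N}_{\Var X}$ using the Lipschitz dependence of $\Lambda\mapsto\mathcal{N}_\Lambda$: writing $\mathcal{N}_{\Var X}-\mathcal{N}_{\Var Y}$ as a divergence and integrating by parts, the differential bound \eqref{ClassPhiDifferentialBound} (which holds also in the unregularized class) gives $|\int\phi(\mathcal{N}_{\Var X}-\mathcal{N}_{\Var Y})|\lesssim|\Var X-\Var Y|\,|\Var X^{-1/2}|^2\,|\Var X|^{1/2}$, which is of the claimed order. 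Letting $\bar L\rightarrow\infty$ finally upgrades the bound from $\mathcal{D}^{\bar L}$ to $\mathcal{D}$.

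\emph{Main difficulty.} The conceptual step --- the dyadic splitting that makes the multilevel structure visible --- is immediate; the real work lies in the bookkeeping: constructing the cutoffs so that $\xi\,\eta^{(j)}_m\in W^{K,\infty}$ with the sharp weights, verifying \emph{exactly} the hypotheses of Theorem~\ref{TheoremNormalApproximationMultilevelLocalDependence} (the locality graph, the moment bounds, and the level-wise summability $\sum_m\|X^{(j)}_m\|_{\exp^\gamma}^2\lesssim L^{-d}2^{-jd}$), and carrying out the $\bar L$-uniform perturbation and continuity estimates needed to absorb the nonlocal tail $R$, to pass from $\Var Y$ to $\Var X$ in the Gaussian, and to remove the regularization $\bar L$ --- this last point being the one I expect to require the most care.
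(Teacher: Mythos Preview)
Your core strategy --- dyadic telescoping of the finite-range approximations $v_r$, localization by a partition of unity at each scale, and reduction to Theorem~\ref{TheoremNormalApproximationMultilevelLocalDependence} --- is the paper's strategy. But there is a real gap in how you invoke that theorem.

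You write that the bound $\sum_{m\in\mathbb{Z}^d}\|X^{(j)}_m\|_{\exp^\gamma}^2\leq C L^{-d}2^{-jd}$ is ``exactly the summability hypothesis required by Theorem~\ref{TheoremNormalApproximationMultilevelLocalDependence}''. There is no such hypothesis. Definition~\ref{ConditionRandomVariable} requires (i) a \emph{finite} index set $2^m\mathbb{Z}^d\cap[0,L)^d$ at each level, and (ii) a \emph{uniform} bound $\|X_y^m\|_{\exp^\gamma}\leq BL^{-d}$, not a weighted or square-summable one. Your decomposition runs over all of $\mathbb{Z}^d$ with decaying but not uniform weights $(1+|x^{(j)}_m|/L)^{-d-1}$, so Theorem~\ref{TheoremNormalApproximationMultilevelLocalDependence} does not apply as stated. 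Simply truncating to $|x^{(j)}_m|\lesssim L$ does not work either: the discarded tail at level $j=0$ has total $\exp^\gamma$-norm of order one, not of order $L^{-d}$, so it cannot be absorbed into a single top-level term.

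The paper closes this gap by a preliminary reduction (its Step~1): a radial change of variables $\varphi:\mathbb{R}^d\to B_{cL}$ that is $1$-Lipschitz, preserves the finite-range property of $a$, and carries the weighted integral $\int\xi v\,dx$ to $\int\tilde\xi\tilde v\,d\tilde x$ with $\tilde v,\tilde v_r$ supported in a fixed ball and $\tilde\xi$ satisfying uniform derivative bounds. After this compactification the index sets are finite, the moment bounds are uniform, and Theorem~\ref{TheoremNormalApproximationMultilevelLocalDependence} applies directly. As a bonus, the remainder $v-v_{2^J}$ need not be treated separately by a perturbation argument: the paper simply declares it to be the single variable $X_0^{1+\log_2 L}$ at the top level of the multilevel structure, which satisfies the required bound $\|X_0^{1+\log_2 L}\|_{\exp^\gamma}\leq CL^{-d}$ by \eqref{AssumptionApproximationByFiniteRange}. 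This avoids the entire $R$-perturbation and $\Var Y\to\Var X$ discussion in your sketch.
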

Heuristically speaking, our theorem asserts the following: Suppose that $v$ is a random field with strongly localized dependencies in the sense that for any scale $r\geq 1$ it may be approximated by a random field $v_r$ with finite dependency range $r$ up to an error $v-v_r$ of order $r^{-d}$. Then any smoothly weighted average of the random field $v$ on a scale $L\geq 1$ will be approximately Gaussian, up to an error of the order $L^{-d} |\log L|^C$ in e.\,g.\ the $1$-Wasserstein distance, provided that the variance of this weighted average does not degenerate. Note that the fluctuations of such weighted averages $L^{-d} \int_{\mathbb{R}^d} \xi v \,dx$ are expected to be of the order of the CLT scaling $L^{-d/2}$; in other words, our theorem provides a relative error of order $L^{-d/2} |\log L|^C$.

In our second main result, we will make use of the following notion of ``multilevel local dependence decomposition''. An illustration of this decomposition is provided in Figure~\ref{FigureMultilevel}.

\begin{definition}[Sums of random variables with multilevel local dependence structure]
\label{ConditionRandomVariable}
Let $d\geq 1$ and $L\geq 2$. Consider a random field $a$ on $\mathbb{R}^d$ subject to the assumption of finite range of dependence (A) or an $L$-periodic random field subject to the assumption of finite range of dependence (A'). Let $X=X(a)$ be an $\mathbb{R}^N$-valued random variable depending on the random field.

We then say that $X$ is a sum of random variables with multilevel local dependence if there exist random variables $X_y^m=X_y^m(a)$, $0\leq m\leq 1+\log_2 L$ and $y\in 2^m \mathbb{Z}^d\cap [0,L)^d$, and constants $K\geq 2$, $\gamma\in (0,2]$, and $B\geq 1$ with the following properties:
\begin{itemize}
\item The random variable $X_y^m(a)$ is a measurable function of $a|_{y+K \log L \, [-2^m,2^m]^d}$.
\item We have
\begin{align*}
X=\sum_{m=0}^{1+\log_2 L} \sum_{y\in 2^m \mathbb{Z}^d\cap [0,L)^d} X_y^m.
\end{align*}
\item The random variables $X_y^m$ satisfy the bound
\begin{align}
\label{BoundMultilevelDependenceStructure}
||X_y^m||_{\exp^\gamma} \leq B L^{-d}.
\end{align}
\end{itemize}
\end{definition}

It is well-known that Stein's method of normal approximation allows to establish a quantitative result on normal approximation for sums of random variables with local dependence structure, see e.\,g.\ \cite{ChenGoldsteinShao,ChenShao,RinottRotar} and the references therein. However, in many applications global dependencies arise naturally: For example, the approximation of the effective coefficient in the homogenization of linear elliptic equations with random coefficient field -- that is, the random variable $a^\RVE$ as defined by \eqref{aRVE} -- features global dependencies.
It is shown in the companion article \cite{FischerVarianceReduction} that $a^\RVE$ may nevertheless be written as such a sum of random variables with a \emph{multilevel local dependence} structure.

\begin{figure}
\begin{tikzpicture}[scale=0.75]
\foreach \x in {0}
   \draw (16*\x+0.1,4*1.5+0.1) rectangle (16*\x+15.9,4*1.5+1.4);
\foreach \x in {0,1}
   \draw (8*\x+0.1,3*1.5+0.1) rectangle (8*\x+7.9,3*1.5+1.4);
\foreach \x in {0,1,2,3}
   \draw (4*\x+0.1,2*1.5+0.1) rectangle (4*\x+3.9,2*1.5+1.4);
\foreach \x in {0,1,2,3,4,5,6,7}
   \draw (2*\x+0.1,1*1.5+0.1) rectangle (2*\x+1.9,1*1.5+1.4);
\foreach \x in {0,1,2,3,4,5,6,7,8,9,10,11,12,13,14,15}
   \draw (\x+0.1,0*1.5+0.1) rectangle (\x+0.9,0*1.5+1.4);
\foreach \x in {0,1,2,3,4,5,6,7,8,9,10,11,12,13,14,15}
   \draw[draw=blue,fill=blue] (\x+0.1,-0.5+0.1) rectangle (\x+0.9,-0.5+0.2);
\foreach \x in {0,4,7,11,13,14}
   \draw[draw=red,fill=red] (\x+0.1,-0.5+0.1) rectangle (\x+0.9,-0.5+0.2);
\foreach \x in {0,1,2,3,4,5,6,7,8,9,10,11,12,13,14,15}
		\node[scale=0.7] at (\x+0.5,0*1.5+0.75){$X_{\x}^0$};
\foreach \x in {0,1,2,3,4,5,6,7}
		\node[scale=0.7] at (2*\x+1.0,1*1.5+0.75){$X_{\x}^1$};
\foreach \x in {0,1,2,3}
		\node[scale=0.7] at (4*\x+2.0,2*1.5+0.75){$X_{\x}^2$};
\foreach \x in {0,1}
		\node[scale=0.7] at (8*\x+4.0,3*1.5+0.75){$X_{\x}^3$};
\foreach \x in {0}
		\node[scale=0.7] at (16*\x+8.0,4*1.5+0.75){$X_{\x}^4$};
\end{tikzpicture}
\caption{An illustration of the ``multilevel local dependence structure'' as introduced in Definition~\ref{ConditionRandomVariable} (in a one-dimensional setting). At the bottom, a sample of the random field $a$ is depicted; the $X_y^k$ may depend not only on the values of the random field directly below their box, but on the random field in a region that is wider by a factor of $K \log L$.\label{FigureMultilevel}}
\end{figure}
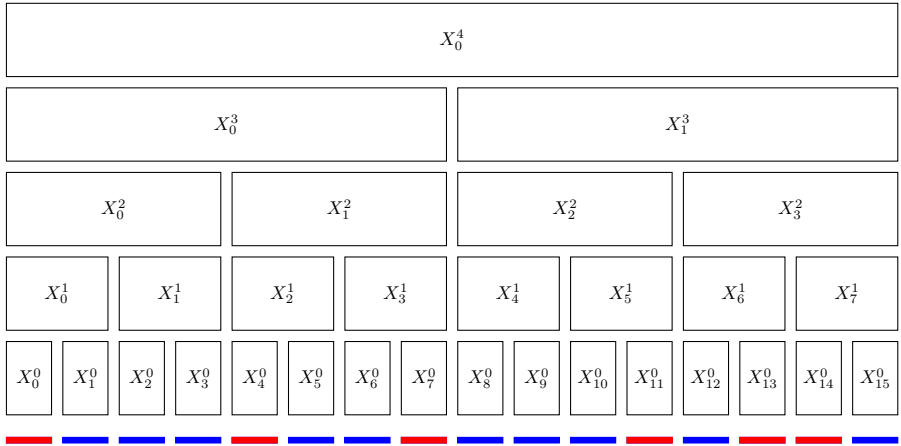

The (second) main result of the present work is the following quantitative central limit theorem for sums of vector-valued random variables with a multilevel local dependence structure, which is not covered by the typical normal approximation results for sums of random variables with a given dependency graph.

\begin{theorem}
\label{TheoremNormalApproximationMultilevelLocalDependence}
Let $d\geq 1$ and $L\geq 2$. Consider a random field $a$ on $\mathbb{R}^d$ subject to the assumption of finite range of dependence (A) or an $L$-periodic random field subject to the assumption of finite range of dependence (A'). Let $X=X(a)$ be an $\mathbb{R}^N$-valued random variable that may be written as a sum of random variables with multilevel local dependence in the sense of Definition~\ref{ConditionRandomVariable}. Then the law of the random variable $X$ is close to a multivariate Gaussian in the sense
\begin{align}
\label{NormalApproximationMultilevel}
&\mathcal{D}(X-\mathbb{E}[X],\mathcal{N}_\Lambda)
\leq
C(d,\gamma,N,K) B^3 (\log L)^{C(d,\gamma)} \big(L^{-d} |\Lambda^{1/2}| |\Lambda^{-1/2}|^3\big) L^{-d},
\end{align}
where $\Lambda:=\Var X$ and where the constant $C(d,\gamma,N,K)$ depends in a polynomial way on $d$, $N$ and $K$.

Furthermore, we have for any symmetric positive definite $\Lambda\in \mathbb{R}^{N\times N}$ with $\Lambda\geq \Var X$ and $|\Lambda-\Var X|\leq L^{-d}$
\begin{align}
\label{NormalApproximationMultilevelDegenerate}
\mathcal{D}(X-\mathbb{E}[X],\mathcal{N}_\Lambda)
\leq&
C(d,\gamma,N,K) B^3 (\log L)^{C(d,\gamma)} \big(L^{-d} |\Lambda^{1/2}| |\Lambda^{-1/2}|^3\big) L^{-d}
\\&
\nonumber
+C(d,N) (\log L)^{C(d,\gamma)} |\Lambda-\Var X|^{1/2},
\end{align}
providing a better bound in the case of degenerate covariance matrices $\Var X$.
\end{theorem}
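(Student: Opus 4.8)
The plan is to adapt multivariate Stein's method to the dyadic scale structure of Definition~\ref{ConditionRandomVariable}, working first with the regularised distance $\mathcal D^{\bar L}$ for $\bar L<\infty$ and sending $\bar L\to\infty$ at the end; we may and do assume $\mathbb E[X]=0$, and for \eqref{NormalApproximationMultilevel} also $\Lambda=\Var X$. Fix $\phi\in\Phi_\Lambda^{\bar L}$ and let $f=f_\phi$ solve the Stein equation $\operatorname{tr}(\Lambda\nabla^2 f)-x\cdot\nabla f=\phi-\int\phi\,d\mathcal N_\Lambda$ via the Ornstein--Uhlenbeck representation
\begin{align*}
f(x)=-\int_0^\infty\Bigl(\mathbb E\bigl[\phi(e^{-t}x+\sqrt{1-e^{-2t}}\,Z)\bigr]-\int\phi\,d\mathcal N_\Lambda\Bigr)\,dt,\qquad Z\sim\mathcal N_\Lambda .
\end{align*}
The first step is to record the regularity of $f$ tailored to the class $\Phi_\Lambda$: differentiating under the integral and integrating by parts in the Gaussian variable $Z$ yields the pointwise bound $|\nabla^2 f|\lesssim|\Lambda^{-1/2}|$ and -- crucially -- only an \emph{averaged} bound on the third derivative, namely that any functional of the form $\mathbb E[\,\xi\,(\nabla^3 f(Y):v\otimes w)\,]$ with $Y$ a random vector whose law is comparable (on the relevant small scales) to $\mathcal N_\Lambda$ is controlled by $\mathbb E[\osc_{r}\phi]$ at suitable scales $r$, evaluated at mildly Gaussian-smeared $Y$-distributed points, hence by $C\,|\Lambda^{1/2}|\,|\Lambda^{-1/2}|^3\,\mathbb E[|\xi|\,|v|\,|w|]$ after invoking \eqref{DefinitionClassPhi} and its consequence \eqref{ClassPhiDifferentialBound}; it is precisely to make this legitimate that the oscillation condition is built into $\Phi_\Lambda$. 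The required comparability of the laws of the remainders $Y$ below (each being $X$ with a small piece removed) to $\mathcal N_\Lambda$ is part of the argument; it is made rigorous by first truncating the $X_y^m$ at scale $(\log L)^{C}BL^{-d}$, which by \eqref{BoundMultilevelDependenceStructure} changes all quantities only negligibly and is the origin of the $(\log L)^{C(d,\gamma)}$-losses.

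Next I expand $\mathbb E[\phi(X)]-\int\phi\,d\mathcal N_\Lambda=\mathbb E[\operatorname{tr}(\Lambda\nabla^2 f(X))]-\mathbb E[X\cdot\nabla f(X)]$ along $X=\sum_{m,y}X_y^m$. For a pair $(m,y)$ let $\mathcal N(m,y)$ be the set of pairs $(m',y')$ whose box $y'+K\log L\,[-2^{m'},2^{m'}]^d$ lies within distance $1$ of $y+K\log L\,[-2^m,2^m]^d$; by (A)/(A') then $X_{y'}^{m'}$ and $X_y^m$ are independent whenever $(m',y')\notin\mathcal N(m,y)$. Writing $\mathcal N^2(m,y):=\bigcup_{(m',y')\in\mathcal N(m,y)}\mathcal N(m',y')$ for the second neighbourhood, set $X^{(m,y)}:=\sum_{(m',y')\in\mathcal N(m,y)}X_{y'}^{m'}$, $\widehat X^{(m,y)}:=\sum_{(m',y')\in\mathcal N^2(m,y)}X_{y'}^{m'}$, $Z^{(m,y)}:=X-X^{(m,y)}$ and $\widehat Z^{(m,y)}:=X-\widehat X^{(m,y)}$; then $Z^{(m,y)}$ is independent of $X_y^m$, and $\widehat Z^{(m,y)}$ is independent of the whole family $\{X_{y'}^{m'}:(m',y')\in\mathcal N(m,y)\}$ (again by (A)/(A'), the relevant unions of boxes being pairwise more than $1$ apart). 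Using $\mathbb E[X_y^m]=0$ and Taylor's expansion of $\nabla f$ about $Z^{(m,y)}$,
\begin{align*}
\mathbb E[X\cdot\nabla f(X)]=\sum_{m,y}\mathbb E\bigl[(X_y^m\otimes X^{(m,y)}):\nabla^2 f(Z^{(m,y)})\bigr]+R_1
\end{align*}
with $R_1$ a third-order remainder, while the independence identities give $\Lambda=\sum_{m,y}\mathbb E[X_y^m\otimes X^{(m,y)}]$ and $\mathbb E[(X_y^m\otimes X^{(m,y)}):\nabla^2 f(\widehat Z^{(m,y)})]=\mathbb E[X_y^m\otimes X^{(m,y)}]:\mathbb E[\nabla^2 f(\widehat Z^{(m,y)})]$. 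Subtracting, every surviving contribution to $\mathbb E[\operatorname{tr}(\Lambda\nabla^2 f(X))]-\mathbb E[X\cdot\nabla f(X)]$ is either $R_1$ or a difference $\nabla^2 f(\cdot)-\nabla^2 f(\cdot)$ whose two arguments differ by a sum bounded by $|\widehat X^{(m,y)}|$, and a further mean-value step recasts each of them as an expression of the type $\mathbb E[\,|X_y^m|\,|X^{(m,y)}|\,|\widehat X^{(m,y)}|\,|\nabla^3 f|(\cdot)\,]$ (and, from $R_1$, the same with $|X^{(m,y)}|$ appearing twice).

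Together with the averaged third-derivative bound this reduces the proof to estimating $\sum_{m,y}\mathbb E[\,|X_y^m|\,|X^{(m,y)}|\,|\widehat X^{(m,y)}|\,]$ (and the analogous sum with one factor squared), and this is the step I expect to be the main obstacle, and the source of the long-range/short-range dichotomy announced in the abstract. Split $X^{(m,y)}$ and $\widehat X^{(m,y)}$ into their contributions from each level $m'$. The \emph{short-range} contributions, coming from levels $m'$ comparable to or coarser than $m$, involve only $\lesssim(K\log L)^d$ summands per level, each of norm $\lesssim BL^{-d}$ by \eqref{BoundMultilevelDependenceStructure}, so these partial neighbourhood sums are themselves of order $(K\log L)^{C}BL^{-d}$ and a direct H\"older estimate, pulling out $\|X_y^m\|_{\exp^\gamma}$, suffices. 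The \emph{long-range} contributions are the delicate ones: when $(m,y)$ is at a coarse scale, a level $m'\ll m$ supplies up to $(K\log L)^d 2^{(m-m')d}$ summands, so the neighbourhood sum is \emph{not} small and a naive termwise count overshoots by a factor of order $L^d$. The remedy is to exploit that within each level the contributing blocks are only weakly dependent -- each correlated with $\lesssim(K\log L)^d$ others at its own level -- so that a central-limit-type estimate yields $\|X^{(m,y)}\|_{L^2}^2\lesssim(K\log L)^{2d}(2^{md}+\log L)B^2L^{-2d}$, a square root of the naive count, and likewise for $\widehat X^{(m,y)}$; inserting this, pulling out $\|X_y^m\|_{\exp^\gamma}\le BL^{-d}$ at the cost of a $(\log L)^C$ factor, and summing over the $(L/2^m)^d$ blocks at each level $m$ turns the bound into $\sum_m(L/2^m)^d\,B^3L^{-3d}2^{md}(\log L)^{C}\lesssim B^3L^{-2d}(\log L)^{C+1}$ up to powers of $K\log L$. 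Tracking the three factors of $B$ (three blocks) and the $\log L$-losses then yields \eqref{NormalApproximationMultilevel}.

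Finally, \eqref{NormalApproximationMultilevelDegenerate} follows by a perturbation argument. Enlarge $X$ to $X+G$, where $G=\sum_{y\in\mathbb Z^d\cap[0,L)^d}g_y$ with $g_y$ i.i.d.\ $\mathcal N_{(\Lambda-\Var X)/L^d}$ independent of $a$: since $|\Lambda-\Var X|\le L^{-d}$ one has $\|g_y\|_{\exp^\gamma}\lesssim L^{-d}\le BL^{-d}$, so $X+G$ is again of the form of Definition~\ref{ConditionRandomVariable} (the $g_y$ entering as level-$0$ blocks) and $\Var(X+G)=\Lambda$; thus \eqref{NormalApproximationMultilevel} applies to $X+G$ and produces the first term on the right-hand side of \eqref{NormalApproximationMultilevelDegenerate}. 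For the second term one splits, for $\phi\in\Phi_\Lambda$, $\mathbb E[\phi(X)]-\int\phi\,d\mathcal N_\Lambda=\bigl(\mathbb E[\phi(X)]-\mathbb E[\phi(X+G)]\bigr)+\bigl(\mathbb E[\phi(X+G)]-\int\phi\,d\mathcal N_\Lambda\bigr)$ and bounds $|\mathbb E[\phi(X)]-\mathbb E[\phi(X+G)]|\le\mathbb E[\osc_{|G|}\phi(X)]\le C(d,N)(\log L)^{C(d,\gamma)}|\Lambda-\Var X|^{1/2}$, using the oscillation condition \eqref{DefinitionClassPhi} together with the near-Gaussianity of $X$ already established; passing to the limit $\bar L\to\infty$ throughout then completes the proof.
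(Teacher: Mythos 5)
Your overall skeleton — Stein's method with a Taylor expansion keyed to a local-dependence structure, plus a CLT-type concentration bound for the sums $X^{(m,y)}$, $\widehat X^{(m,y)}$ over dyadic neighbourhoods — is the same as the paper's (the paper's grouping $Y_{il}$ via the assignment $i^{m(i)}(j)$, together with Lemma~\ref{ConcentrationStretchedExponential}, plays exactly the role of your concentration step). But there are two genuine gaps in the way you close the estimate.

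First, the claimed ``averaged third-derivative bound'' does not exist in the form you need. For $\phi\in\Phi_\Lambda$ (essentially a Lipschitz test function), the unmollified Stein solution $f$ is only $C^{1,1}$; in the heat-semigroup representation $\nabla^3 f(x)=\tfrac12\int_0^1\tfrac{\sqrt{1-s}}{s^{3/2}}\int\phi(\sqrt{1-s}x-\sqrt{s}z)\nabla^3\mathcal N_\Lambda(z)\,dz\,ds$ the $s$-integral diverges at $s=0$ after a single integration by parts, and indeed $\int|\nabla^3 f|\,d\mathcal N_\Lambda=\infty$ in general. One must work with the $\varepsilon$-mollified equation (Proposition~\ref{PropositionSolutionSteinEquation}), which is why $|\log\varepsilon|$ and $\delta/\varepsilon$ factors appear in the bounds \eqref{BoundMollifiedThirdDerivativeLimitDistribution}, and these factors are not decorative: feeding the Taylor (third-derivative) estimate into the contribution from coarse levels $m$ produces a factor of order $|\Lambda^{-1/2}|\bar Z_i/\varepsilon\sim 2^{md/2}/B^2$, and a short computation with the paper's choices $\varepsilon\sim B^3(\log L)^C|\Lambda^{-1/2}|^3 L^{-2d}$, $\bar Z_i\sim B(\log L)^C 2^{md/2}L^{-d}$ shows that summing this over $m$ up to $\log_2 L$ gives a total of order $BL^{-d/2}$ --- far larger than the target $B^3(\log L)^{C}|\Lambda^{1/2}||\Lambda^{-1/2}|^3 L^{-2d}\sim B^3(\log L)^C L^{-d}$. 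The fix in the paper is precisely the long-range/short-range dichotomy you flag as ``the main obstacle'': for levels $m(i)\le\ell$ the third-derivative Taylor estimate (Proposition~\ref{PropositionSolutionSteinEquation}d) is used, but for $m(i)>\ell$ one must instead estimate the difference $\nabla^2 f_\varepsilon(X)-\nabla^2 f_\varepsilon(X-\cdot)$ directly by the \emph{oscillation} of $\nabla^2 f_\varepsilon$ (Proposition~\ref{PropositionSolutionSteinEquation}c), which has no $\delta/\varepsilon$ enhancement and scales linearly in $\delta=\bar Z_i$. The concentration-for-neighbourhood-sums device you propose is also needed, but it addresses a different (count-of-summands) issue and does not repair this one.

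Second, the bootstrap. You remark that the laws of the remainders are ``comparable to $\mathcal N_\Lambda$ on the relevant scales'' and propose a truncation of the $X_y^m$ as the rigorisation. Truncation makes the variables bounded, but it does not give you a Gaussian upper bound on the \emph{density} of $X$, which is what would be needed to replace $\mathcal N_\Lambda$ by the law of $X$ inside the integrals $\int H^\varepsilon_\delta\,d\mathcal N_\Lambda$ and $\int H'_{\varepsilon,\delta}\,d\mathcal N_\Lambda$. The paper handles this with a self-improvement: after verifying $c\,H^\varepsilon_\delta,\;c'\,H'_{\varepsilon,\delta}\in\Phi_\Lambda^{\bar L}$ (estimates \eqref{BoundMollifiedSecondDerivativeOscInFunctionClass}, \eqref{BoundMollifiedThirdDerivativeInFunctionClass}), it writes $\mathbb E[H(X)]\le\int H\,d\mathcal N_\Lambda+C\,\mathcal D^{\bar L}(X,\mathcal N_\Lambda)$ directly from the definition of $\mathcal D^{\bar L}$, so that $\mathcal D^{\bar L}$ appears on both sides of the Stein inequality and must be \emph{absorbed}; this is exactly what the smallness condition \eqref{ConditionForNormalApproximation} of Theorem~\ref{NormalApproximation} is for, and it is not automatic. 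The same issue recurs in your argument for \eqref{NormalApproximationMultilevelDegenerate}: the bound $\mathbb E[\osc_{|G|}\phi(X)]\lesssim(\log L)^C|\Lambda-\Var X|^{1/2}$ is not a consequence of \eqref{DefinitionClassPhi} plus ``near-Gaussianity of $X$'' --- it requires constructing the auxiliary function $\hat h=\mathcal N_{\bar G^2\Id}\ast\osc_{(2\sqrt N+1)\bar G}\phi_\varepsilon$, showing a multiple of it lies in $\Phi_\Lambda^{\tilde L}$ via Lemma~\ref{PropertiesOfFunctionClass}c,d, and again invoking the bootstrap through $\mathcal D^{\tilde L}(X+G-\mathbb E[X],\mathcal N_\Lambda)$. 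Both gaps are structural rather than cosmetic; without the mollification, the low/high-level split, and the bootstrap, the argument does not reach the advertised $L^{-d}\cdot|\Lambda^{1/2}||\Lambda^{-1/2}|^3\cdot L^{-d}$ rate.
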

For sums of random variables with multilevel local dependence structure, we also prove the following simple (and far from optimal) result on moderate deviations. Its proof makes use of the previous Theorem~\ref{TheoremNormalApproximationMultilevelLocalDependence} and an auxiliary concentration estimate provided in Lemma~\ref{iidSumTailBound} which is a consequence of Bennett's inequality.
\begin{theorem}
\label{TheoremModerateDeviations}
Let $d\geq 1$ and $L\geq 2$. Consider a random field $a$ on $\mathbb{R}^d$ subject to the assumption of finite range of dependence (A) or an $L$-periodic random field subject to the assumption of finite range of dependence (A').  Let $X=X(a)$ be an $\mathbb{R}^N$-valued random variable that may be written as a sum of random variables with multilevel local dependence structure $X=\sum_{m=0}^{1+\log_2 L} \sum_{i\in 2^m \mathbb{Z}^d \cap [0,L)^d} X_i^m$ in the sense of Definition~\ref{ConditionRandomVariable}.

Then there exists $\beta=\beta(d,\gamma)>0$ and a positive definite symmetric matrix $\Lambda\in \mathbb{R}^{N\times N}$ with $|\Lambda-\Var X|\leq C(d,\gamma,N,K) B^2 L^{-2\beta} L^{-d}$ such that for any measurable $A\subset \mathbb{R}^N$ we have the estimate
\begin{align*}
&\mathbb{P}\big[X-\mathbb{E}[X]\in A\big]
\\&
\leq \int_{\{z\in \mathbb{R}^N:\dist(z,A)\leq L^{-\beta} L^{-d/2}\}} \mathcal{N}_{\Lambda}(z) \,dz + C(d,\gamma,N,K) \exp\Big(-\frac{c}{B^C} L^{2\beta}\Big).
\end{align*}
\end{theorem}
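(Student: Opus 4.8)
The plan is to combine three ingredients: the quantitative multivariate central limit theorem of Theorem~\ref{TheoremNormalApproximationMultilevelLocalDependence}, a crude concentration (moderate deviation) estimate for $X$ itself obtained from Bennett's inequality via Lemma~\ref{iidSumTailBound}, and a mollification argument converting the smooth test functions underlying the distance $\mathcal{D}$ into indicator functions of arbitrary measurable sets. We may assume $B^C\le L^{2\beta}$ (otherwise $\exp(-cB^{-C}L^{2\beta})\gtrsim 1$ and the assertion holds trivially once the prefactor is $\ge 1$). Fix a small $\beta=\beta(d,\gamma)>0$, to be determined at the very end, and abbreviate $\delta:=L^{-\beta}L^{-d/2}$ and $R:=L^{\beta}L^{-d/2}$ (the latter up to harmless $\log$-- and $B$--factors).

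\textbf{Concentration of $X$.} First I would establish $\mathbb{P}[|X-\mathbb{E}[X]|>R]\le C(d,\gamma,N,K)\exp(-cB^{-C}L^{2\beta})$. For each level $0\le m\le 1+\log_2 L$ the index set $2^m\mathbb{Z}^d\cap[0,L)^d$ splits into at most $C(K\log L)^d$ subfamilies whose points are pairwise more than $(K\log L)2^{m+1}$ apart, so that by finite range of dependence the $X_y^m$ belonging to one subfamily are independent. After truncating each $X_y^m$ at height $\sim BL^{-d}(\log L)^{1/\gamma}$ -- the truncated tails being negligible by \eqref{BoundMultilevelDependenceStructure} and a union bound over the at most $CL^d$ indices -- Lemma~\ref{iidSumTailBound} applies to each of these subfamilies; summing over the $\le C(K\log L)^{d+1}$ subfamilies and using the a~priori bound $|\Var X|\le C(d,\gamma,N,K)B^2(\log L)^{C}L^{-d}$ -- which follows from \eqref{BoundMultilevelDependenceStructure} and the observation that a fixed $X_y^m$ has nonvanishing covariance with at most $C(K\log L)^{d+1}(1+2^{md})$ of the other summands -- one obtains the claim.

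\textbf{Regularized CLT and passage to sets.} Next I would set $\Lambda:=\Var X+\eta\,\Id$ with $0\le\eta\le C(d,\gamma,N,K)B^2L^{-2\beta}L^{-d}$ (chosen as large as the last step permits, and $\eta=0$ if $\Var X$ is already sufficiently nondegenerate); then $\Lambda$ is symmetric positive definite, $|\Lambda-\Var X|\le C(d,\gamma,N,K)B^2L^{-2\beta}L^{-d}$, and the variance bound above furnishes $|\Lambda^{1/2}|\le CB(\log L)^CL^{-d/2}$ and $|\Lambda^{-1/2}|\le\max(|(\Var X)^{-1/2}|,\eta^{-1/2})$, so that \eqref{NormalApproximationMultilevelDegenerate} of Theorem~\ref{TheoremNormalApproximationMultilevelLocalDependence} gives $\mathcal{D}(X-\mathbb{E}[X],\mathcal{N}_\Lambda)\le\mathcal{E}$ for an explicit $\mathcal{E}=\mathcal{E}(d,\gamma,N,K,B,L)$. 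Given then an arbitrary measurable $A\subset\mathbb{R}^N$, I would take a nonnegative smooth mollifier $\rho$ supported in $B_{\delta/2}$ with $\int\rho=1$ and set $\phi:=\chi_{(A\cap B_R)+B_{\delta/2}}*\rho$, which is smooth and satisfies $\chi_{A\cap B_R}\le\phi\le\chi_{A+B_\delta}$ and $|\nabla\phi|\le C\delta^{-1}$. Checking the oscillation condition in Definition~\ref{DefinitionDistance} from $0\le\phi\le 1$ and the gradient bound shows $\frac{\delta}{C}\phi\in\Phi_\Lambda$, hence $\mathbb{E}[\phi(X-\mathbb{E}[X])]\le\int_{\mathbb{R}^N}\phi\,\mathcal{N}_\Lambda+\frac{C}{\delta}\mathcal{E}$, and combining this with the concentration bound yields
\begin{align*}
\mathbb{P}\big[X-\mathbb{E}[X]\in A\big]
&\le\mathbb{E}\big[\phi(X-\mathbb{E}[X])\big]+\mathbb{P}\big[|X-\mathbb{E}[X]|>R\big]
\\
&\le\int_{\{z:\dist(z,A)\le\delta\}}\mathcal{N}_\Lambda(z)\,dz+\frac{C}{\delta}\mathcal{E}+C(d,\gamma,N,K)\exp\!\Big(-\frac{c}{B^C}L^{2\beta}\Big).
\end{align*}

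\textbf{The main obstacle.} It remains to dispose of the term $\frac{C}{\delta}\mathcal{E}$, and this is where the real work lies. The naive mollification at scale $\delta$ loses a full inverse power of the enlargement scale on the normal-approximation error, and since $A$ is merely measurable this loss cannot be recouped from any regularity of $\partial A$; with the bounds of Theorem~\ref{TheoremNormalApproximationMultilevelLocalDependence} alone this produces only a polynomially small residual. To bring it down to the exponentially small error stated, one has to balance the three free scales -- the covariance regularization $\eta$, the truncation radius $R$ and the mollification/enlargement scale $\delta$ -- against one another, choose $\beta=\beta(d,\gamma)$ small relative to $d$, and, crucially, control the tail of the mollification discrepancy (equivalently, of the discrepancy in a coupling of $X-\mathbb{E}[X]$ with $\mathcal{N}_\Lambda$) by the concentration estimate of the first step rather than by Markov's inequality, shrinking $\eta$ if necessary (which only improves $|\Lambda-\Var X|$). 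The degenerate regime is precisely where the $\eta$-dependent term of \eqref{NormalApproximationMultilevelDegenerate} must be traded against the enlargement, which is why $\Lambda$ rather than $\Var X$ appears in the statement, with $|\Lambda-\Var X|\le C(d,\gamma,N,K)B^2L^{-2\beta}L^{-d}$ in the end.
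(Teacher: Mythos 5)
Your plan runs into a genuine obstruction that you yourself identify but do not resolve, and the gap is structural rather than a matter of tuning parameters. The mollification step converts the distance $\mathcal{D}$ (which tests against Lipschitz-type functions) into a bound for indicators at a cost of $\delta^{-1}$, so the error contributed by the normal approximation is $\delta^{-1}\mathcal{E}$. Since $\mathcal{E}$ from Theorem~\ref{TheoremNormalApproximationMultilevelLocalDependence} is only polynomially small in $L$ (roughly $L^{-d}|\Lambda^{1/2}||\Lambda^{-1/2}|^3 L^{-d}\sim L^{-d/2}(\log L)^C$ after normalizing by the fluctuation scale), while $\delta = L^{-\beta}L^{-d/2}$, the product $\delta^{-1}\mathcal{E}$ is at best $L^{-c}$ for some small $c>0$ --- polynomially, not exponentially small. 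No choice of $\beta$, $\eta$, $R$ can convert a polynomially small bound into an $\exp(-cL^{2\beta})$ bound. The suggestion to ``control the tail of the mollification discrepancy by the concentration estimate'' is the crux of the difficulty, not a resolution of it: the concentration estimate for $X$ only bounds $\mathbb{P}[|X-\mathbb{E}X|>R]$; it says nothing about the discrepancy between the law of $X-\mathbb{E}X$ and $\mathcal{N}_\Lambda$ in the bulk, and there is no a~priori coupling between $X-\mathbb{E}X$ and $\mathcal{N}_\Lambda$ in your setup to which a concentration bound could be applied.

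The paper avoids this loss by not applying the global CLT to $X$ at all. Instead it introduces an intermediate scale $\ell\sim L^{1/2}$, decomposes $X$ into independent blocks $\mathcal{G}_i$ (plus remainders $\mathcal{R}^m$ that are shown to have stretched-exponential tails of size $\ll L^{-d/2}$), applies Theorem~\ref{TheoremNormalApproximationMultilevelLocalDependence} to each block $\mathcal{G}_i$ separately to get a per-block $\mathcal{W}_1$ bound, and then --- crucially --- uses the coupling characterization of $\mathcal{W}_1$ to write $\mathcal{G}_i\stackrel{d}{=}Y_i+Z_i$ with $Y_i$ exactly Gaussian and $\mathbb{E}[|Z_i|]$ small. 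Because the blocks are independent, the $Z_i$ are independent, and Bennett's inequality (via Lemma~\ref{ConcentrationCloseToGaussian}) gives an \emph{exponentially} small tail for $\sum_i Z_i$ even though each $Z_i$ is only controlled in $L^1$. The sum $\sum_i Y_i$ is exactly Gaussian with covariance $\tilde\Lambda=\sum_i\Lambda_i$, which becomes the $\Lambda$ in the statement. So the exponential smallness comes from concentration of the block-coupling errors, something that is simply unavailable if you only apply the CLT once to the whole sum. This block-coupling step is the key idea your proposal is missing, and without it the $\delta^{-1}\mathcal{E}$ term cannot be made exponentially small.
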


\section{Normal Approximation with an Abstract Multilevel Dependency Structure}

\label{SectionNormalApproximationMain}

We now establish a result on quantitative normal approximation for a sum of random variables with a more abstract dependence structure allowing for multiple dependency ranges. More precisely, for a finite index set $I$ we consider a sum of random variables $X_i$
\begin{align*}
X:=\sum_{i\in I} X_i
\end{align*}
to each of which a ``dependency level'' $m(i)\in \mathbb{N}$ is assigned. In the application of our next result in the proof of Theorem~\ref{TheoremNormalApproximationMultilevelLocalDependence}, the dependency level $m(i)$ will correspond to a ``range of dependence'' of about $\sim 2^{m(i)}$ (up to factors of order $\log L$), i.\,e.\ each random variable $X_y^m$ from Definition~\ref{ConditionRandomVariable} will be assigned the dependency level $m$. However, the dependence structure we introduce now is more general than Definition~\ref{ConditionRandomVariable} (and in particular does not include any explicit reference to an underlying random field or even a spatial dimension).

The potential dependencies of the random variables $X_i$ shall be encoded by a matrix $\chi_{ij}\in \{0,1\}$ and a tensor $\chi_{ijk}\in \{0,1\}$ with the following property:
\begin{itemize}
\item For all $i\in I$, the random variable $X_i$ is independent from the collection of all random variables $X_j$ with $\chi_{ij}=0$, $j\in I$.
\item The matrix $\chi_{ij}$ is symmetric, i.\,e.\ $\chi_{ij}=\chi_{ji}$.
\item For all $i,j\in I$, the pair of random variables $(X_i,X_j)$ is independent from the collection of all random variables $X_k$ with $\chi_{ijk}=0$, $k\in I$.
\item The tensor $\chi_{ijk}$ is symmetric in its first two indices, i.\,e.\ $\chi_{ijk}=\chi_{jik}$.
\end{itemize}
Furthermore, we suppose that for any $j\in I$ and any $n\in \mathbb{N}$ with $n>m(j)$ there exists an assignment $i^n(j)$ which assigns the random variable $X_j$ of level $m(j)$ to another random variable $X_{i^n(j)}$ of the (higher) level $n$ with more possible dependencies:
\begin{itemize}
\item We have $m(i^n(j))=n$.
\item It must hold that $\chi_{i i^n(j)}\geq \chi_{ij}$, in other words $\chi_{ij}=1$ implies $\chi_{i i^n(j)}=1$.
\item For all $i$ and $k$ the inequality $\chi_{i i^n(j)k}\geq \chi_{ijk}$ must be true, in other words $\chi_{ijk}=1$ implies $\chi_{i i^n(j)k}=1$.
\end{itemize}
Note that this dependence structure is reminiscent of the local dependence structure in the results of \cite{Stein,BaldiRinott,BaldiRinottStein,Rinott,ChenShao,RinottRotar3,
ChenGoldsteinShao}, but in addition features multiple dependency ranges.

\begin{theorem}
\label{NormalApproximation}
Let $I$ be a finite index set, let $N\in \mathbb{N}$, and let $(\Omega,\mathcal{F},\mathbb{P})$ be a probability space. For any $i\in I$, let $X_i$ be an $\mathbb{R}^N$-valued random variable with vanishing expectation $\mathbb{E}[X_i]=0$ and finite third moment. To each of the $X_i$, let a number $m(i)\in \mathbb{N}$ be assigned.

Let $\chi_{ij}\in \{0,1\}$, $i,j\in I$, and $\chi_{ijk}\in \{0,1\}$, $i,j,k\in I$, be indicator functions for possible dependencies subject to the assumptions preceding the theorem. Let $m(i)$, $i\in I$, and $i^n(j)$, $i,j\in I$, $m(j)<n$, be as above.

Then the probability distribution of the sum
\begin{align}
\label{DefinitionX}
X:=\sum_{i\in I} X_i
\end{align}
may be approximated by an $N$-dimensional Gaussian with covariance matrix
\begin{align}
\label{DefinitionLambda}
\Lambda:=
\sum_{i\in I}\sum_{j\in I} \chi_{ij} \mathbb{E}[X_i\otimes X_j]
\end{align}
in the distance $\mathcal{D}(X,\mathcal{N}_\Lambda)$ in the following sense:

Introduce the abbreviations
\begin{subequations}
\label{NormalApproximationAbbreviations1}
\begin{align}
Z_{ij}&:=\sum_{k\in I:\chi_{ijk}=1} X_k,
\\
Z_{i}&:=\sum_{j\in I:\chi_{ij}=1} X_j,
\\
Y_{il}&:=\sum_{j\in I:m(j)<m(i),i^{m(i)}(j)=l,\chi_{ij}=1} \Big(X_i \otimes X_j-\mathbb{E}[X_i \otimes X_j]\Big),
\\
W_{ij}&:=X_i \otimes X_j-\mathbb{E}[X_i \otimes X_j],
\end{align}
\end{subequations}
and let $\bar X_i, \bar Z_{ij},\bar Z_i,\bar Y_{il}, \bar W_{ij}>0$ be arbitrary positive real numbers.

Let $\ell \in \mathbb{N}$ and $\varepsilon\in (0,\frac{1}{2}]$ be such that the condition
\begin{align}
\nonumber
&\sum_{i\in I,m(i)\leq\ell} \frac{N^{9/2} |\Lambda^{-1/2}|^3}{\varepsilon} \bigg(\sum_{j\in I:m(j)=m(i),\chi_{ij}=1} \big(\bar W_{ij} \bar Z_{ij}
+2 \bar Y_{ij}\, \bar Z_{ij}\big)
+\bar X_i \bar Z_{i}^2\bigg)
\\&~~~
\label{ConditionForNormalApproximation}
+\sum_{i\in I,m(i)>\ell} N^{4} |\Lambda^{-1}| |\log \varepsilon|
\bigg(\sum_{j\in I:m(j)=m(i),\chi_{ij}=1} \big(\bar W_{ij}+2\bar Y_{ij}\big)
+\bar X_i \bar Z_{i} 
\bigg)
\\&~~~
\nonumber
\leq \frac{1}{C}
\end{align}
is satisfied, where the universal constant $C=C(N)$ is given by the proof below.

Then the normal approximation result
\begin{align}
\label{NormalApproximationEstimate}
\mathcal{D} (X,\mathcal{N}_\Lambda)
\leq
C \sqrt{N} |\Lambda^{1/2}| \varepsilon
+\mathcal{R}_{lowlevel}
+\mathcal{R}_{alllevel}
+\mathcal{R}_{tail}
\end{align}
holds true, where
\begin{subequations}
\label{NormalApproximationAbbreviations2}
\begin{align}
\mathcal{R}_{lowlevel}&:=
\frac{CN^{9/2} |\Lambda^{-1/2}|^3}{\varepsilon}\sum_{i\in I,m(i)\leq\ell} \bigg(\sum_{j\in I:m(j)=m(i), \chi_{ij}=1} \big(\bar W_{ij} \bar Z_{ij}^2 
+2 \bar Y_{ij}\, \bar Z_{ij}^2 \big)
+\bar X_i \bar Z_{i}^3\bigg),
\\
\mathcal{R}_{alllevel}&:=
CN^{9/2} |\Lambda^{-1/2}|^2 
|\log \varepsilon|
\sum_{i\in I} \bigg(\sum_{j\in I:m(j)=m(i),\chi_{ij}=1} \big(\bar W_{ij} \bar Z_{ij}
+2 \bar Y_{ij}\, \bar Z_{ij}\big)
+\bar X_i \bar Z_{i}^2\bigg),
\\
\mathcal{R}_{tail}:=&C |\Lambda^{-1}| N^{3/2} \varepsilon^{-N}
\sum_{i\in I} \Bigg(
\sum_{j\in I:m(j)=m(i),\chi_{ij}=1} \mathbb{E}\Big[|W_{ij}| |Z_{ij}| (\chi_{|Z_{ij}|>\bar Z_{ij}}+\chi_{|W_{ij}|>\bar W_{ij}})\Big]
\\&~~~~~~~~~~~~~~~~~~~~~~~~~~
\nonumber
+\sum_{l\in I:m(l)=m(i), \chi_{il}=1}
\mathbb{E}\Big[|Y_{il}|\,
|Z_{il}|(\chi_{|Z_{il}|>\bar Z_{il}}+\chi_{|Y_{il}|>\bar Y_{il}}) \Big]
\\&~~~~~~~~~~~~~~~~~~~~~~~~~~
\nonumber
+\mathbb{E}\Big[|X_i| |Z_{i}|^2 (\chi_{|Z_i|>\bar Z_i}+\chi_{|X_i|>\bar X_i})\Big]
\Bigg).
\end{align}
\end{subequations}
\end{theorem}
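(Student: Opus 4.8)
The plan is to run Stein's method for multivariate Gaussian approximation, adapted to the multilevel dependence structure, and to split the error estimation into a short-range regime and a long-range regime exactly as dictated by the index~$\ell$. Since $\mathcal{D}(X,\mathcal{N}_\Lambda)$ is the supremum of $\mathbb{E}[\phi(X)]-\int_{\mathbb{R}^N}\phi\,\mathcal{N}_\Lambda\,dx$ over $\phi\in\Phi_\Lambda$, and it suffices to bound this bracket uniformly over the subclasses $\Phi_\Lambda^{\bar L}$ and then pass to $\bar L\to\infty$, I would fix $\phi\in\Phi_\Lambda^{\bar L}$ and first pass to a one-sided regularization $\phi_\varepsilon\geq\phi$ obtained by mollifying $\phi$ on scale $\sim\varepsilon|\Lambda^{1/2}|$. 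Then $\mathbb{E}[\phi(X)]\leq\mathbb{E}[\phi_\varepsilon(X)]$ holds trivially, while the oscillation property \eqref{DefinitionClassPhi} gives $\int_{\mathbb{R}^N}(\phi_\varepsilon-\phi)\,\mathcal{N}_\Lambda\,dx\lesssim\sqrt{N}|\Lambda^{1/2}|\varepsilon$, which is the first term of \eqref{NormalApproximationEstimate}; it remains to estimate $\mathbb{E}[\phi_\varepsilon(X)]-\int_{\mathbb{R}^N}\phi_\varepsilon\,\mathcal{N}_\Lambda\,dx$. For this I solve the Stein equation $\Lambda:\nabla^2 f-x\cdot\nabla f=\phi_\varepsilon-\int_{\mathbb{R}^N}\phi_\varepsilon\,\mathcal{N}_\Lambda\,dx$ through the Ornstein--Uhlenbeck representation $f(x)=-\int_0^\infty\big(\mathbb{E}[\phi_\varepsilon(e^{-t}x+\sqrt{1-e^{-2t}}\,Z)]-\int_{\mathbb{R}^N}\phi_\varepsilon\,\mathcal{N}_\Lambda\,dx\big)\,dt$ with $Z\sim\mathcal{N}_\Lambda$, so that everything reduces to controlling $\mathbb{E}[\Lambda:\nabla^2 f(X)-X\cdot\nabla f(X)]$.

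The next step is to record the derivative bounds on the Stein solution $f$ that drive the argument; these are obtained from \eqref{ClassPhiDifferentialBound}, the oscillation bound on $\phi$ (inherited by $\phi_\varepsilon$), and Gaussian integration by parts in the Ornstein--Uhlenbeck formula. I expect four estimates: $|\nabla f|\lesssim 1$; an \emph{averaged} second-derivative bound of order $|\Lambda^{-1/2}|^2|\log\varepsilon|$; a third-derivative bound of order $|\Lambda^{-1/2}|^3\varepsilon^{-1}$; and a crude \emph{pointwise} second-derivative bound, valid on all of $\mathbb{R}^N$ but of the much larger order $|\Lambda^{-1}|\varepsilon^{-N}$, which will only ever be applied against low-probability events. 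These four prefactors are exactly the ones multiplying, respectively, the Gaussian side, the quantities grouped into $\mathcal{R}_{alllevel}$ and the high-level part of \eqref{ConditionForNormalApproximation}, the quantities grouped into $\mathcal{R}_{lowlevel}$ and the low-level part of \eqref{ConditionForNormalApproximation}, and $\mathcal{R}_{tail}$.

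The heart of the proof is the expansion of $\mathbb{E}[X\cdot\nabla f(X)]=\sum_{i\in I}\mathbb{E}[X_i\cdot\nabla f(X)]$. For each $i$ one subtracts the neighborhood sum $Z_i$: since $X_i$ is independent of $X-Z_i$ and $\mathbb{E}[X_i]=0$, Taylor expansion gives $\mathbb{E}[X_i\cdot\nabla f(X)]=\mathbb{E}[(X_i\otimes Z_i):\nabla^2 f(X-Z_i)]+O\big(\mathbb{E}[|X_i||Z_i|^2]\,\|\nabla^3 f\|\big)$. Splitting $X_i\otimes X_j=W_{ij}+\mathbb{E}[X_i\otimes X_j]$ inside $X_i\otimes Z_i=\sum_{j:\chi_{ij}=1}X_i\otimes X_j$, the deterministic part contributes $\mathbb{E}[X_i\otimes Z_i]:\mathbb{E}[\nabla^2 f(X-Z_i)]$, which after undoing the shift $Z_i$ sums over $i$ to $\Lambda:\mathbb{E}[\nabla^2 f(X)]$ up to a third-order error, thereby cancelling the Gaussian side; the fluctuation parts $\sum_{j:\chi_{ij}=1}\mathbb{E}[W_{ij}:\nabla^2 f(X-Z_i)]$ must be shown negligible. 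For same-level indices $m(j)=m(i)$ these directly produce the $\bar W_{ij}$-terms, whereas for lower-level indices $m(j)<m(i)$ one regroups them — using the lifting maps $i^n(j)$ together with the monotonicities $\chi_{i\,i^n(j)}\geq\chi_{ij}$ and $\chi_{i\,i^n(j)k}\geq\chi_{ijk}$ — into the sums $Y_{il}$; the point of the monotonicity is precisely that $Y_{il}$ remains independent of the complement of the single neighborhood $Z_{il}$, so that one further Taylor step (in $Z_{il}$, resp.\ in $Z_{ij}$ for the $W$-terms) makes these contributions quadratic in the $X$'s and genuinely small. Throughout, all the random variables $X_i,Z_i,W_{ij},Y_{il},Z_{ij}$ are truncated at the prescribed levels $\bar X_i,\bar Z_i,\bar W_{ij},\bar Y_{il},\bar Z_{ij}$: on the truncated events the Taylor remainders are estimated by the first three derivative bounds on $f$ and assemble into $\mathcal{R}_{lowlevel}$, $\mathcal{R}_{alllevel}$, and the left-hand side of \eqref{ConditionForNormalApproximation}; the complementary events are absorbed into $\mathcal{R}_{tail}$ via the crude pointwise bound on $\nabla^2 f$. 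The dichotomy $m(i)\leq\ell$ versus $m(i)>\ell$ is exactly the decision whether to carry the expansion to third order in the neighborhood sum (inexpensive in powers of the truncation levels $\bar Z$ but costing the factor $\varepsilon^{-1}|\Lambda^{-1/2}|^3$) or to stop already at second order (costing only $|\log\varepsilon|\,|\Lambda^{-1/2}|^2$, at the price of one extra power of $\bar Z$) — the latter being the advantageous choice precisely when the high-level neighborhood sums $Z_i$ are large, as they are in the application to Theorem~\ref{TheoremNormalApproximationMultilevelLocalDependence}.

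The main obstacle I anticipate is the combinatorial bookkeeping of this iterated expansion: one must verify at every Taylor step that the cluster of factors $X_\bullet$ being differentiated against is independent of the argument shift that remains — which is exactly what the structural hypotheses on $\chi_{ij}$, $\chi_{ijk}$ and on the lifting maps $i^n(j)$ are designed to guarantee — and one must route every remainder into one of the three reservoirs $\mathcal{R}_{lowlevel}$, $\mathcal{R}_{alllevel}$, $\mathcal{R}_{tail}$ with the stated combinatorial weights, the smallness condition \eqref{ConditionForNormalApproximation} being precisely what is needed for this accounting to close. A secondary technical point is deriving the four derivative bounds on $f$ with sharp enough dependence on $\varepsilon$, $|\Lambda^{\pm 1/2}|$ and $N$; the universal constant $C=C(N)$ is then read off from the accounting. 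Finally, adding back the regularization error $C\sqrt{N}|\Lambda^{1/2}|\varepsilon$ and taking the supremum over $\phi$ followed by the limit $\bar L\to\infty$ yields \eqref{NormalApproximationEstimate}.
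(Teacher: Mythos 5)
Your plan captures the overall architecture of the paper's proof: reduce to $\mathcal{D}^{\bar L}$ and pass to the limit, mollify, solve a smoothed Stein equation via an Ornstein--Uhlenbeck/Barbour--G\"otze representation (the paper's formula \eqref{Definitionfepsilon} is exactly the $s=1-e^{-2t}$ reparametrization of yours), expand $\mathbb{E}[X\cdot\nabla f_\varepsilon(X)]$ using independence of $X_i$ from $X-Z_i$, regroup the lower-level $W_{ij}$ into $Y_{il}$ via the lifting maps and perform one more Taylor step, truncate for the tail, and use the $m(i)\le\ell$ versus $m(i)>\ell$ dichotomy for second- versus third-order expansion. This is all consistent with the actual argument.

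There is, however, one essential mechanism that your proposal glosses over and which is the crux of why the smallness condition \eqref{ConditionForNormalApproximation} exists with precisely those prefactors. After the Taylor step you are left with expectations such as $\mathbb{E}[\osc_{\bar Z_{ij}}\nabla^2 f_\varepsilon(X)]$ and $\mathbb{E}\big[\sup_{|z|\le\bar Z_i}|\nabla^3 f_\varepsilon|(X+z)\big]$. Your ``averaged'' derivative bounds are averages against the \emph{Gaussian} $\mathcal{N}_\Lambda$, but the expectation here is against the (unknown, non-Gaussian) law of $X$, and the only pointwise bound available is the catastrophic $\varepsilon^{-N}$ one. The paper's resolution, in Proposition~\ref{PropositionSolutionSteinEquation} c)--d), is to show that (majorants of) these oscillation quantities, suitably normalized, themselves belong to the class $\Phi_\Lambda^{\tilde L}$; this permits the self-referencing estimate $\mathbb{E}[H(X)]\le \tfrac{1}{c}\,\mathcal{D}^{\bar L}(X,\mathcal{N}_\Lambda)+\int H\,\mathcal{N}_\Lambda$, where $c$ is exactly the normalization constant. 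Feeding this back into the Stein identity produces a $\mathcal{D}^{\bar L}(X,\mathcal{N}_\Lambda)$-term on the right-hand side, multiplied by the left side of \eqref{ConditionForNormalApproximation}; the condition then makes the coefficient $\le 1/2$ so the term can be absorbed, and the constant $C(N)$ in the theorem is precisely the constant arising in this absorption. Your phrase ``what is needed for this accounting to close'' hints at this, but without the observation that the Stein solution's oscillations can be tested against $\mathcal{D}$ itself, the expectations against $X$ cannot be controlled, and the argument stalls.

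A second, more minor, point: the one-sided mollification $\phi_\varepsilon\ge\phi$ you propose does not preserve the smoothness needed to run the Stein equation argument -- a pointwise supremum over a ball is not smooth, and any subsequent averaging destroys the one-sidedness. The paper instead works with the symmetric Gaussian mollification $\phi_\varepsilon(x)=\int\phi(\sqrt{1-\varepsilon^2}\,x-\varepsilon z)\mathcal{N}_\Lambda(z)\,dz$ (which is not an envelope) and compensates with the two-sided smoothing estimate of Lemma~\ref{SmoothingEstimateLemma}, whose own proof again relies on testing oscillation quantities against $\Phi_\Lambda^{\bar L}$. This choice is not cosmetic: the prefactor $10^3 N^{3/2}$ in Lemma~\ref{SmoothingEstimateLemma} enters the constant accounting, and the Gaussian-average structure is what makes parts c) and d) of Proposition~\ref{PropositionSolutionSteinEquation} tractable.
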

Concerning our theorem, a few remarks are in order:
\begin{itemize}
\item In our theorem $\varepsilon\in (0,\frac{1}{2}]$ is a free parameter that by \eqref{NormalApproximationEstimate} one would typically choose in such a way that $|\Lambda^{1/2}| \varepsilon$ is of the order of the normal approximation error.
\item Note that our theorem is tailored towards an application to random variables $X_k$ with stretched exponential moments: For typical dependence structures like our multilevel local dependence structure from Definition~\ref{ConditionRandomVariable}, by concentration estimates like in Lemma~\ref{ConcentrationStretchedExponential} the random variables $Z_{ij}$, $Z_i$, and $Y_{il}$ also satisfy stretched exponential moment bounds. Choosing $\bar Z_{ij}\sim C||Z_{ij}||_{\exp^{\gamma_0}} (\log L)^{1/\gamma_0}$ and making similar choices for the other variables, the smallness of the terms $\mathcal{R}_{tail}$ is then a consequence Lemma~\ref{CalculusStretchedExponential}b.
\item The dependence of our estimates on the dimension $N$ is not optimal and we make no attempt to get close to optimality.
\end{itemize}

Before proving our theorem, let us state two auxiliary results that are required for the proof.

The majority of the following auxiliary results on the existence of solutions to the (smoothed) Stein's equation are standard, see e.\,g.\ \cite{ChenGoldsteinShao}; however, in order to keep the paper self-contained and in order to keep the dependence on the dimension $N$ explicit, we provide the detailed argument in the appendix.

\begin{proposition}
\label{PropositionSolutionSteinEquation}
Let $N\in \mathbb{N}$ and let $\Lambda\in \mathbb{R}^{N\times N}$ be a symmetric positive definite matrix. Let $\bar L>0$.

For any $\phi\in \Phi_{\Lambda}^{\bar L}$ and any $\varepsilon>0$, there exists a function $f_\varepsilon$ with the following properties:
\begin{itemize}
\item[a)] The functions $\phi$ and $f_\varepsilon$ are related through the ``mollified'' Stein equation
\begin{align}
\label{MollifiedSteinEquation}
&-(\nabla \cdot \Lambda \nabla f_\varepsilon)(x)+(x\cdot \nabla f_\varepsilon)(x)
=\phi_\varepsilon(x)
-\int_{\mathbb{R}^N} \phi_\varepsilon(z) \mathcal{N}_\Lambda(z) \,dz
\end{align}
with
\begin{align}
\label{DefinitionPhiVarepsilon}
\phi_\varepsilon(x)
:=\int_{\mathbb{R}^N} \phi(\sqrt{1-\varepsilon^2}x-\varepsilon z) \mathcal{N}_\Lambda(z) \,dz.
\end{align}
\item[b)] The third derivative of $f_\varepsilon$ is subject to the uniform bound
\begin{align}
\label{BoundThirdDerivativeLinfty}
|\nabla^3 f_\varepsilon(x)|
\leq 15 |\Lambda^{-1}| \varepsilon^{-N}
\end{align}
for all $x\in \mathbb{R}^N$.
\item[c)] For any $\delta>0$ and $K:= 2\sqrt{N}+1$, the function
\begin{align*}
H_\delta^\varepsilon(x):=2(\mathcal{N}_{\delta^2 \Id_N} \ast \osc_{K\delta} \nabla^2 f_\varepsilon)(x)
\end{align*}
is an upper bound for the oscillation of $\nabla^2 f_\varepsilon$
\begin{align}
\label{UpperBoundByH}
(\osc_{\delta} \nabla^2 f_\varepsilon) (x) = \sup_{|z_a|\leq \delta,|z_b|\leq \delta} |\nabla^2 f_\varepsilon(x+z_a)-\nabla^2 f_\varepsilon(x+z_b)| \leq H_\delta^\varepsilon(x)
\end{align}
and satisfies the estimates
\begin{align}
\label{BoundMollifiedSecondDerivativeOscLimitDistribution}
&\int_{\mathbb{R}^N} H_\delta^\varepsilon (x) \mathcal{N}_{\Lambda}(x) \,dx
\leq 10^2 N^{3/2} |\Lambda^{-1}| \, |\log \varepsilon| \, \delta
\end{align}
and
\begin{align}
\label{BoundMollifiedSecondDerivativeOscInFunctionClass}
\frac{1}{3\cdot 10^4 N^{5/2} |\Lambda^{-1}| |\log \varepsilon|} H_\delta^\varepsilon \in \Phi_\Lambda^{\tilde L}
\end{align}
for any $\tilde L\geq 2\cdot 4^N (|\Lambda^{1/2}|^N \delta^{-N} +1)$.
\item[d)] For any $\delta>0$, the function
\begin{align*}
H_{\varepsilon,\delta}'(x):=|\nabla^3 f_\varepsilon(x)|+2(\mathcal{N}_{\delta^2 \Id_N} \ast \osc_{K\delta} \nabla^3 f_\varepsilon)(x)
\end{align*}
with $K:=2\sqrt{N}+1$
is an upper bound for the supremum of $\nabla^3 f_\varepsilon$ in a $\delta$-neighborhood
\begin{align}
\label{UpperBoundByH2}
\sup_{|z|\leq \delta} |\nabla^3 f_\varepsilon| (x+z) \leq H_{\varepsilon,\delta}'(x)
\end{align}
and satisfies the estimates
\begin{align}
\label{BoundMollifiedThirdDerivativeLimitDistribution}
&\int_{\mathbb{R}^N} H_{\varepsilon,\delta}' (x) \mathcal{N}_{\Lambda}(x) \,dx
\leq 10^2 N^3 |\Lambda^{-1/2}|^2 \Big(|\log \varepsilon|+\frac{|\Lambda^{-1/2}|\delta}{\varepsilon}\Big)
\end{align}
and
\begin{align}
\label{BoundMollifiedThirdDerivativeInFunctionClass}
\frac{\varepsilon}{10^4 N^{3} |\Lambda^{-1/2}|^3} H_{\varepsilon,\delta}' \in \Phi_\Lambda^{\tilde L}
\end{align}
for any $x_0\in \mathbb{R}^N$ and any $\tilde L\geq 4^N (|\Lambda^{1/2}|^N \delta^{-N} +1)+2\varepsilon^{-N}$.
\end{itemize}
\end{proposition}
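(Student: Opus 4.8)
The plan is to take for $f_\varepsilon$ the regularized solution of Stein's equation for the Ornstein--Uhlenbeck generator $\mathcal{L}g:=\nabla\cdot(\Lambda\nabla g)-x\cdot\nabla g$ of the Gaussian $\mathcal{N}_\Lambda$. Let $P_tg(x):=\int g(e^{-t}x+\sqrt{1-e^{-2t}}\,z)\,\mathcal{N}_\Lambda(z)\,dz$ be the OU semigroup, so that $\tfrac{d}{dt}P_t=\mathcal{L}P_t$, $P_0=\Id$, and $P_tg\to\int g\,\mathcal{N}_\Lambda$. By symmetry of $\mathcal{N}_\Lambda$ the mollification in \eqref{DefinitionPhiVarepsilon} is exactly $\phi_\varepsilon=P_{s(\varepsilon)}\phi$ with $e^{-s(\varepsilon)}=\sqrt{1-\varepsilon^2}$, so I would set $f_\varepsilon(x):=\int_0^\infty\big(P_t\phi_\varepsilon(x)-\int\phi_\varepsilon\,\mathcal{N}_\Lambda\big)\,dt=\int_{s(\varepsilon)}^\infty\big(P_u\phi(x)-\int\phi\,\mathcal{N}_\Lambda\big)\,du$. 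The linear growth $|\phi|\leq|\phi(0)|+\bar L|x|$ (from $|\nabla\phi|\leq\bar L$) makes the integral absolutely convergent once one uses the spectral-gap decay of $P_t$ on the centred function $\phi_\varepsilon-\int\phi_\varepsilon\mathcal{N}_\Lambda$, and then $\mathcal{L}f_\varepsilon=\int_0^\infty\mathcal{L}P_t\phi_\varepsilon\,dt=\int\phi_\varepsilon\mathcal{N}_\Lambda-\phi_\varepsilon$, which is precisely \eqref{MollifiedSteinEquation}. This is the standard computation; the only care is the convergence just mentioned.

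\textbf{Part (b), the pointwise bound.} Writing $P_u\phi(x)=\int\phi(y)\,\mathcal{N}_{c_u\Lambda}(y-e^{-u}x)\,dy$ with $c_u:=1-e^{-2u}$, one integration by parts in $y$ followed by two $x$-derivatives on the kernel gives $\nabla^3P_u\phi(x)=e^{-3u}\int\nabla\phi(y)\otimes(\nabla^2\mathcal{N}_{c_u\Lambda})(y-e^{-u}x)\,dy$. The reason the bound carries no $\bar L$ is that the only datum of $\phi$ that survives is $|\nabla\phi|$ integrated against a Gaussian, which I would control by the a priori bound \eqref{ClassPhiDifferentialBound} rather than by $\|\nabla\phi\|_\infty$. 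To this end I would dominate $|\nabla^2\mathcal{N}_{c\Lambda}|$ pointwise by $C(N)|\Lambda^{-1}|c^{-1}\mathcal{N}_{2c\Lambda}$ (elementary Gaussian algebra: $t\,e^{-t/2c}\leq(4c/e)\,e^{-t/4c}$ with $t=\Lambda^{-1}w\cdot w$), then bound $\int|\nabla\phi|(y)\,\mathcal{N}_{2c\Lambda}(y-\cdot)\,dy$ by $(2c)^{-N/2}$ when $2c\leq1$ (since $\mathcal{N}_{2c\Lambda}\leq(2c)^{-N/2}\mathcal{N}_\Lambda$) and by $1$ when $2c\geq1$ (write $\mathcal{N}_{2c\Lambda}=\mathcal{N}_\Lambda\ast\mathcal{N}_{(2c-1)\Lambda}$ and superpose \eqref{ClassPhiDifferentialBound}). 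The substitution $r=1-e^{-2u}$ turns $\int_{s(\varepsilon)}^\infty e^{-3u}c_u^{-N/2-1}\,du$ into $\leq\tfrac1N\varepsilon^{-N}$, and the $\tfrac1N$ cancels the $2^{N/2}$-type loss from the kernel domination, leaving $|\nabla^3f_\varepsilon|\leq15|\Lambda^{-1}|\varepsilon^{-N}$ after tracking constants.

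\textbf{Parts (c) and (d).} The functions $H_\delta^\varepsilon$ and $H'_{\varepsilon,\delta}$ are built so that pointwise domination of $\osc_\delta\nabla^2f_\varepsilon$ (resp.\ $\sup_{|z|\leq\delta}|\nabla^3f_\varepsilon(\cdot+z)|$) is automatic: $\mathcal{N}_{\delta^2\Id_N}$ puts mass $\geq\tfrac12$ on $\{|y|\leq2\sqrt N\delta\}$, and $K=2\sqrt N+1$ is exactly what makes a $\delta$-ball around $x$ lie in a $K\delta$-ball around $x-y$ for every such $y$, whence $2\,\mathcal{N}_{\delta^2\Id_N}\ast\osc_{K\delta}(\cdot)\geq\osc_\delta(\cdot)$. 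For the integral bounds I would transfer the convolution onto $\mathcal{N}_\Lambda$ by Fubini and reduce to estimating $\int\osc_r\nabla^2f_\varepsilon(x)\,\mathcal{N}_\Lambda(x-x_0)\,dx$ and $\int\sup_{|z|\leq r}|\nabla^3f_\varepsilon(x+z)|\,\mathcal{N}_\Lambda(x-x_0)\,dx$; crucially I would \emph{not} pass to a higher derivative, but differentiate the kernel representation of $\nabla^3f_\varepsilon$ once more in the shift variable (so the oscillation/sup lands on $\nabla^3\mathcal{N}_{c\Lambda}$) and run the same domination--superposition scheme as in (b), now averaging over the outer $\mathcal{N}_\Lambda$ as well. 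The point is that this extra averaging convolves the kernel to $\mathcal{N}_{(1+c)\Lambda}$, which being wider than $\mathcal{N}_\Lambda$ integrates against $|\nabla\phi|$ to $\leq1$, and the residual $\int_{s(\varepsilon)}^\infty e^{-ku}c_u^{-1}\,du$ is only logarithmically singular, producing the factor $|\log\varepsilon|$; the extra $|\Lambda^{-1/2}|\delta/\varepsilon$ term in (d) is the genuinely $\varepsilon^{-1}$ contribution of the $\delta$-scale oscillation. Finally, to see that the normalized $H$'s lie in $\Phi_\Lambda^{\tilde L}$ I would check the two defining properties: the gradient bound follows from $\|\nabla\mathcal{N}_{\delta^2\Id_N}\|_{L^1}\lesssim\sqrt N/\delta$ together with part (b) (this is where the threshold $\tilde L\gtrsim4^N(|\Lambda^{1/2}|^N\delta^{-N}+1)$ enters), and the oscillation condition \eqref{DefinitionClassPhi} for the $H$'s is obtained by exactly the differentiate-the-kernel-then-superpose argument one level up, the normalization constants $3\cdot10^4N^{5/2}|\Lambda^{-1}||\log\varepsilon|$ and $\varepsilon/(10^4N^3|\Lambda^{-1/2}|^3)$ being engineered so that the resulting bound is $\leq r$.

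\textbf{Main obstacle.} The ideas are standard (OU Stein kernel with a hypercontractive mollification); the difficulty is the bookkeeping. One must keep every intermediate estimate in the form ``(weight depending only on $N,\Lambda,\varepsilon,\delta$) $\times$ ($\int|\nabla\phi|$ against a Gaussian)'' so that only \eqref{ClassPhiDifferentialBound}, never $\|\nabla\phi\|_\infty$, is invoked, and one must track the interplay of the four scales $\sqrt{c_u}$, $1$ (the scale of $\Lambda$), $\delta$, $\varepsilon$ through every Gaussian convolution so that the advertised powers of $N$, $|\Lambda^{-1/2}|$, $\varepsilon^{-1}$, $\delta$ and the precise universal constants $15$, $10^2$, $10^4$ all come out. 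The single most delicate point is the pointwise bound \eqref{BoundThirdDerivativeLinfty}: one has to see that the $2^{N/2}$-type losses incurred each time a Gaussian is re-widened are exactly offset by the $\tfrac1N$ gained from the radial $u$-integral, so that no dimensional prefactor survives in front of $|\Lambda^{-1}|\varepsilon^{-N}$.
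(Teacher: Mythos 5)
Your construction is the paper's construction in disguise: the paper's formula \eqref{Definitionfepsilon} is exactly $f_\varepsilon=\int_{s(\varepsilon)}^\infty\big(P_u\phi-\int\phi\,\mathcal{N}_\Lambda\big)\,du$ under the substitution $s=1-e^{-2u}$ (using the symmetry of $\mathcal{N}_\Lambda$), and the structure of the argument you outline — integrate by parts once so that only $\nabla\phi$ and hence only \eqref{ClassPhiDifferentialBound} is ever invoked, dominate derivatives of the Gaussian kernel by a wider Gaussian, close the integrals by superposing with the outer $\mathcal{N}_\Lambda$, and then package the $H$-functions into $\Phi_\Lambda^{\tilde L}$ via the convolution/oscillation machinery of Lemma~\ref{PropertiesOfFunctionClass} — is precisely the paper's. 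Two places where the paper is slightly more careful than your sketch and which you would have to adjust when tracking constants: the paper widens the Gaussian by a factor $1+\tau$ with $\tau\sim 1/N$ rather than by a fixed factor $2$, which is what keeps the dimensional prefactor polynomial in $N$ uniformly in $\varepsilon$ (with your $\tau=1$ a residual $2^{N/2}$ survives when $\varepsilon$ is close to $1$); and in (c)--(d) the paper transfers the oscillation onto $\phi$ and uses the defining bound \eqref{DefinitionClassPhi}, rather than differentiating the Gaussian kernel and appealing to a mean-value/oscillation bound on $\nabla^k\mathcal{N}_{c\Lambda}$ — your variant works but requires controlling the oscillation radius relative to $\sqrt{c_u}$, a constraint the paper's route avoids.
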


The following lemma enables us to replace the class of functions $\Phi_\Lambda^{\bar L}$ in the definition of the distance $\mathcal{D}^{\bar L}$ by a class of mollified functions $\phi_\varepsilon$ (with $\phi\in \Phi_\Lambda^{\bar L}$). This is a standard argument in the theory of normal approximation by Stein's method \cite{BhattacharyaRao,Goetze}; however, we provide the proof of our version of the lemma in the appendix, as it keeps an explicit (though not optimal) dependence on the dimension $N$.
\begin{lemma}
\label{SmoothingEstimateLemma}
Given $\phi\in \Phi_\Lambda$, define
\begin{align*}
\phi_\varepsilon(x):=\int_{\mathbb{R}^N} \phi(\sqrt{1-\varepsilon^2}x-\varepsilon z) \mathcal{N}_\Lambda(z) \,dz.
\end{align*}
Introduce the ``smoothed'' distance
\begin{align*}
\mathcal{D}_\varepsilon^{\bar L}(X,\mathcal{N}_\Lambda):=
\sup_{\phi\in \Phi_\Lambda^{\bar L}} \bigg(\mathbb{E}[\phi_\varepsilon(X)]-\int_{\mathbb{R}^N} \phi_\varepsilon(x) \mathcal{N}_\Lambda(x)\,dx \bigg).
\end{align*}
For any $0<\varepsilon\leq \frac{1}{2}$ and any ${\bar L}\geq 2 \cdot 4^N \varepsilon^{-N}$ we then have the estimate
\begin{align}
\label{SmoothingEstimate}
\mathcal{D}^{\bar L}(X,\mathcal{N}_\Lambda)
\leq
20 \sqrt{N} |\Lambda^{1/2}| \varepsilon
+10^3 N^{3/2} \mathcal{D}_\varepsilon^{\bar L}(X,\mathcal{N}_\Lambda)
\end{align}
for all random variables $X$.
\end{lemma}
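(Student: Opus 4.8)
\emph{Proof plan.} Both $\mathcal D^{\bar L}$ and $\mathcal D_\varepsilon^{\bar L}$ are suprema over the same class $\Phi_\Lambda^{\bar L}$, so I fix $\phi\in\Phi_\Lambda^{\bar L}$ and estimate $\mathbb E[\phi(X)]-\int\phi\,\mathcal N_\Lambda$. The algebraic backbone is the exact invariance $\int\phi_\varepsilon\,\mathcal N_\Lambda=\int\phi\,\mathcal N_\Lambda$: indeed $\phi_\varepsilon(x)=\mathbb E[\phi(\sqrt{1-\varepsilon^2}x-\varepsilon Z)]$ with $Z\sim\mathcal N_\Lambda$, and $\sqrt{1-\varepsilon^2}Z'-\varepsilon Z\stackrel{d}{=}Z'$ for independent copies $Z,Z'$ of $\mathcal N_\Lambda$ (the map $\phi\mapsto\phi_\varepsilon$ is a step of the Ornstein--Uhlenbeck semigroup whose stationary law is $\mathcal N_\Lambda$). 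Since $\phi_\varepsilon$ is by definition an admissible competitor in $\mathcal D_\varepsilon^{\bar L}$, we have $\mathbb E[\phi_\varepsilon(X)]-\int\phi_\varepsilon\,\mathcal N_\Lambda\le\mathcal D_\varepsilon^{\bar L}(X,\mathcal N_\Lambda)$, and the whole statement reduces to bounding $\mathbb E[\phi(X)]-\mathbb E[\phi_\varepsilon(X)]$.

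For this I would run a Bhattacharya--Rao--G\"otze-type smoothing estimate. Writing $\phi_\varepsilon=\phi^\#\!\circ(\sqrt{1-\varepsilon^2}\,\cdot\,)$ with $\phi^\#:=\phi*\mathcal N_{\varepsilon^2\Lambda}$, the two elementary increments $\phi\to\phi^\#\to\phi^\#\!\circ(\sqrt{1-\varepsilon^2}\,\cdot\,)$ are controlled pointwise by Gaussian averages of oscillations of $\phi$, e.g. $|\phi^\#(x)-\phi(x)|\le\int\osc_{|w|}\phi(x)\,\mathcal N_{\varepsilon^2\Lambda}(w)\,dw$ and $|\phi^\#(x)-\phi^\#(\sqrt{1-\varepsilon^2}x)|\le\osc_{\varepsilon^2|x|}\phi^\#(x)$. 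The error terms that end up integrated against $\mathcal N_\Lambda$ are treated using the defining property \eqref{DefinitionClassPhi} of $\Phi_\Lambda$ in its convolution-stable form $\int\osc_r\phi(x)\,\mathcal N_{t\Lambda}(x-x_0)\,dx\le r$ for every $t\ge1$ and $x_0$ (obtained by averaging \eqref{DefinitionClassPhi} against $\mathcal N_{(t-1)\Lambda}$), combined with a dyadic decomposition of $\mathbb R^N$ into the annuli $\{2^k\rho_0\le|x|<2^{k+1}\rho_0\}$ with $\rho_0\simeq\sqrt N\,|\Lambda^{1/2}|$, on the $k$-th of which $\mathcal N_\Lambda\le 2^{N/2}\exp(-c\,4^kN)\,\mathcal N_{2\Lambda}$ pointwise; this tames the $x$-dependent oscillation radius produced by the contraction factor and yields the summand $20\sqrt N\,|\Lambda^{1/2}|\,\varepsilon$.

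The delicate part — and the reason the \emph{regularized} distance $\mathcal D_\varepsilon^{\bar L}$, with bounded-gradient test functions, has to be used rather than $\mathcal D^{\bar L}$ — is the bound for the error terms integrated against the a priori unknown law of $X$. These cannot be absorbed into $\mathcal D^{\bar L}$: the natural oscillation-type majorants $\Psi$ of $\phi-\phi_\varepsilon$ are only $\bar L$-Lipschitz pointwise and satisfy merely $\int\osc_r\Psi\,\mathcal N_\Lambda(\cdot-x_0)\,dx\le r+c\,\varepsilon\sqrt N\,|\Lambda^{1/2}|$, so that $\Psi/c\in\Phi_\Lambda^{\bar L}$ only for $c$ of order $\bar L$. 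The repair is the classical device of bounding $\phi$ from above by $\phi^+_a:=\sup_{|z|\le a}\phi(\cdot+z)$ and observing that $\phi^+_a$, \emph{after} mollification, becomes — up to a factor $C N^{3/2}$ and harmless affine corrections — an admissible competitor in $\mathcal D_\varepsilon^{\bar L}$; it is precisely here that the hypothesis $\bar L\ge 2\cdot 4^N\varepsilon^{-N}$ enters, being the threshold that makes mollified sup-over-ball functions admissible (compare the membership statements in Proposition~\ref{PropositionSolutionSteinEquation}(c), where the same $4^N(\cdot)^{-N}$ thresholds occur). One then gets $\mathbb E[\phi(X)]\le\mathbb E[(\phi^+_a)_\varepsilon(X)]+(\text{small, $X$-independent})\le\int(\phi^+_a)_\varepsilon\,\mathcal N_\Lambda+C N^{3/2}\mathcal D_\varepsilon^{\bar L}(X,\mathcal N_\Lambda)+(\text{small})$, and by the invariance identity and the class property $\int(\phi^+_a)_\varepsilon\,\mathcal N_\Lambda=\int\phi^+_a\,\mathcal N_\Lambda\le\int\phi\,\mathcal N_\Lambda+\int\osc_a\phi\,\mathcal N_\Lambda\le\int\phi\,\mathcal N_\Lambda+a$; taking $a\simeq\varepsilon\,|\Lambda^{1/2}|$ — with the tail corrections from the mollification killed by Gaussian concentration — closes the estimate. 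The remaining work is bookkeeping of the $N$-, $\Lambda$- and $\varepsilon$-dependence in the dyadic and tail estimates so as to reach the stated constants $20$, $10^3$ and the exponent $3/2$.
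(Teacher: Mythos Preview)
Your reduction via the Ornstein--Uhlenbeck invariance $\int\phi_\varepsilon\,\mathcal N_\Lambda=\int\phi\,\mathcal N_\Lambda$ is correct, and you correctly identify the hard step as controlling the error against the unknown law of $X$. However, the ``repair'' you propose has a genuine gap.

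The claim $\mathbb E[\phi(X)]\le\mathbb E[(\phi^+_a)_\varepsilon(X)]+(\text{small, $X$-independent})$ fails as stated. The map $x\mapsto\sqrt{1-\varepsilon^2}\,x-\varepsilon Z$ shifts $x$ by roughly $\tfrac12\varepsilon^2|x|+\varepsilon|Z|$; for $|x|\gg a/\varepsilon^2$ this exceeds $a$, so $\phi^+_a(\sqrt{1-\varepsilon^2}\,x-\varepsilon Z)$ need not dominate $\phi(x)$. Since $\phi$ is merely $\bar L$-Lipschitz (not bounded), the residual error on such $x$ is of order $\bar L\varepsilon^2|x|$ and hence $X$-dependent; the Bhattacharya--Rao device you invoke works for bounded test functions (indicators), not for the class $\Phi_\Lambda^{\bar L}$. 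Moreover, even disregarding the contraction, $\phi^+_a$ itself is \emph{not} in $C\,\Phi_\Lambda^{\bar L}$ for $C$ independent of $\bar L$: one only has $\int\osc_r\phi^+_a\,\mathcal N_\Lambda(\cdot-x_0)\le r+a$, and Lemma~\ref{PropertiesOfFunctionClass}d cannot be applied to $\phi^+_a$ directly because the hypothesis $\int|h|\,\mathcal N_\Lambda(\cdot-x_0)\le\delta$ fails for $h=\phi^+_a$.

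The paper's proof circumvents both issues. It works not with $\phi_\varepsilon$ but with the pure convolution $\tilde\phi_\varepsilon(x):=\phi_\varepsilon(x/\sqrt{1-\varepsilon^2})=\int\phi(x-\varepsilon z)\mathcal N_\Lambda(z)\,dz$, so that $|\phi-\tilde\phi_\varepsilon|(x)\le\int\osc_{\varepsilon|z|}\phi(x)\,\mathcal N_\Lambda(z)\,dz$ has no $x$-dependent contraction term. The term $\mathbb E[\tilde\phi_\varepsilon(X)]-\int\tilde\phi_\varepsilon\mathcal N_\Lambda$ is handled by the algebraic identity $\tilde\phi_\varepsilon=\theta_{\varepsilon/2}$ with $\theta=\mathcal N_{(3/4-\varepsilon^2/4)\varepsilon^2\Lambda}\ast\hat\phi$ and $\tfrac12\theta\in\Phi_\Lambda^{\bar L}$, which feeds directly into $\mathcal D_{\varepsilon/2}^{\bar L}$. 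For the oscillation error against the law of $X$, the paper first majorizes $\osc_{\varepsilon|z|}\phi$ by its Gaussian mollification (Lemma~\ref{PropertiesOfFunctionClass}c), then writes this mollification as $(\tilde\psi^z)_\varepsilon$ for $\psi^z:=\mathcal N_{\varepsilon^2(2\Id_N-\Lambda)}\ast\osc_{(|z|+K)\varepsilon}\phi$; the point is that $\psi^z$ --- unlike $\phi^+_a$ --- is an \emph{oscillation}-type function, so Lemma~\ref{PropertiesOfFunctionClass}d applies and $\psi^z/(40N(|z|+K+1))\in\Phi_\Lambda^{\bar L}$. Integration in $z$ against $\mathcal N_\Lambda$ then produces the factor $N^{3/2}$. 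The missing ingredient in your plan is precisely this: the error must be expressed as $(\cdot)_\varepsilon$ of a function that is \emph{nonnegative and small in $\mathcal N_\Lambda$-average} (an oscillation), not as $(\cdot)_\varepsilon$ of a perturbation of $\phi$ itself.
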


We now turn to the proof of our result on quantitative normal approximation for a sum $X$ of random variables $X_i$ with ``multilevel local dependence structure''.
\begin{proof}[Proof of Theorem~\ref{NormalApproximation}]
First we observe that by the fact
\begin{align*}
\mathcal{D}(X,\mathcal{N}_\Lambda)
=\lim_{{\bar L}\rightarrow\infty} \mathcal{D}^{\bar L}(X,\mathcal{N}_\Lambda)
\end{align*}
it suffices to establish the bound for $\mathcal{D}^{\bar L}(X,\mathcal{N}_\Lambda)$ for all large enough but finite ${\bar L}<\infty$.

The proof proceeds using Stein's method of normal approximation. Proposition~\ref{PropositionSolutionSteinEquation} provides for any $0<\varepsilon<1$ and for any $\phi\in \Phi_{\Lambda}^{\bar L}$ a function $f_\varepsilon$ that solves the ``smoothed'' Stein's equation
\begin{align}
\label{MollifiedSteinEquationRepeat}
-\nabla \cdot (\Lambda \nabla f_\varepsilon(x)) + x\cdot \nabla f_\varepsilon(x)
= \phi_\varepsilon(x)-\int_{\mathbb{R}^N} \phi_\varepsilon(z) \mathcal{N}_\Lambda(z)\,dz.
\end{align}
The smoothing result of Lemma~\ref{SmoothingEstimateLemma} allows us to control the distance $\mathcal{D}^{\bar L}(X,\mathcal{N}_\Lambda)$ by
\begin{align*}
\mathcal{D}^{\bar L}(X,\mathcal{N}_\Lambda)
\leq
20 \sqrt{N} |\Lambda^{1/2}| \varepsilon
+10^3 N^{3/2} \sup_{\phi \in \Phi_\Lambda^{\bar L}}\bigg|\mathbb{E}[\phi_\varepsilon(X)] - \int_{\mathbb{R}^N} \phi_\varepsilon(z) \mathcal{N}_\Lambda(z) \,dz \bigg|.
\end{align*}
Thus, by the ``mollified'' Stein's equation \eqref{MollifiedSteinEquationRepeat} the estimate
\begin{align}
\label{EstimateOnDL}
\mathcal{D}^{\bar L}(X,\mathcal{N}_\Lambda)
\leq
20 \sqrt{N} |\Lambda^{1/2}| \varepsilon
+10^3 N^{3/2} \sup_{\phi \in \Phi_\Lambda^{\bar L}}\bigg|
\mathbb{E}\Big[(-\nabla \cdot (\Lambda \nabla f_\varepsilon)+x \cdot \nabla f_\varepsilon)(X)\Big]\bigg|
\end{align}
holds true.
Hence, in order to obtain an estimate for $\mathcal{D}^{\bar L}(X,\mathcal{N}_\Lambda)$ it suffices to bound
\begin{align*}
\mathbb{E}\big[(-\nabla \cdot (\Lambda \nabla f_\varepsilon)+x \cdot \nabla f_\varepsilon)(X)\big]
\end{align*}
uniformly in $\phi\in \Phi_\Lambda^{\bar L}$.

In order to derive such an estimate, we may rewrite using the definition of $X$ (see \eqref{DefinitionX})
\begin{align*}
\mathbb{E}\big[(x \cdot \nabla f_\varepsilon)(X)\big]
&=\mathbb{E}\big[X\cdot \nabla f_\varepsilon(X)\big]
=
\sum_{i\in I} \mathbb{E}\big[X_i \cdot \nabla f_\varepsilon(X)\big].
\end{align*}
As by definition of $\chi_{ij}$ the quantities $X_i$ and $X-\sum_{j\in I} \chi_{ij} X_j$ are stochastically independent, we obtain by adding and subtracting $\mathbb{E}[X_i \cdot \nabla f_\varepsilon (X-\sum_{j\in I} \chi_{ij} X_j)]$ and using the fact that $\mathbb{E}[X_i]=0$
\begin{align*}
&\mathbb{E}\big[(x \cdot \nabla f_\varepsilon)(X)\big]
\\&
=
\sum_{i\in I} \mathbb{E}[X_i] \cdot \mathbb{E}\bigg[\nabla f_\varepsilon \Big(X-\sum_{j\in I} \chi_{ij} X_j\Big)\bigg]
\\&~~~
+\sum_{i\in I} \mathbb{E}\bigg[X_i \cdot \Big(\nabla f_\varepsilon(X)-\nabla f_\varepsilon \Big(X-\sum_{j\in I} \chi_{ij} X_j\Big)\Big)\bigg]
\\&
=
\sum_{i\in I} \mathbb{E}\bigg[X_i \cdot \Big(\nabla f_\varepsilon(X)-\nabla f_\varepsilon \Big(X-\sum_{j\in I} \chi_{ij} X_j\Big)\Big)\bigg].
\end{align*}
Next, we infer by adding zero in order to obtain an expression reminiscent of Taylor expansion
\begin{align*}
&\mathbb{E}\big[(x \cdot \nabla f_\varepsilon)(X)\big]
\\
&=
\sum_{i\in I} \mathbb{E}\bigg[X_i \otimes \Big(\sum_{j\in I} \chi_{ij} X_j\Big) : \nabla^2 f_\varepsilon (X)\bigg]
\\&~~~
+\sum_{i\in I}
\mathbb{E}\bigg[X_i \cdot \bigg(\nabla f_\varepsilon(X)-\nabla f_\varepsilon\Big(X-\sum_{j\in I} \chi_{ij} X_j\Big)
-\Big(\sum_{j\in I} \chi_{ij} X_j\Big) \cdot \nabla^2 f_\varepsilon (X)\bigg)\bigg].
\end{align*}
Adding again zero, we deduce
\begin{align*}
&\mathbb{E}\big[(x \cdot \nabla f_\varepsilon)(X)\big]
\\
&=
\sum_{i\in I} \sum_{j\in I} \chi_{ij} \mathbb{E}[X_i\otimes X_j] : \mathbb{E}[(\nabla^2 f_\varepsilon) (X)]
\\&~~~
+\sum_{i\in I} \sum_{j\in I} \chi_{ij} \mathbb{E}\bigg[\Big(X_i \otimes X_j-\mathbb{E}[X_i \otimes X_j]\Big) : \nabla^2 f_\varepsilon (X)\bigg]
\\&~~~
+\sum_{i\in I}
\mathbb{E}\bigg[X_i \cdot \bigg(\nabla f_\varepsilon(X)-\nabla f_\varepsilon\Big(X-\sum_{j\in I} \chi_{ij} X_j\Big)
-\Big(\sum_{j\in I} \chi_{ij} X_j\Big) \cdot \nabla^2 f_\varepsilon (X)\bigg)\bigg].
\end{align*}
We now observe that the double sum $\sum_{i\in I} \sum_{j\in I} \chi_{ij} \mathbb{E}[X_i\otimes X_j]$ in the first term on the right-hand side is equal to $\Lambda$. Splitting the second term on the right-hand side and using the symmetry with respect to $i$ and $j$, we obtain
\begin{align*}
&\mathbb{E}\big[(x \cdot \nabla f_\varepsilon)(X)\big]-\mathbb{E}[(\nabla \cdot \Lambda \nabla f_\varepsilon)(X)]
\\
&=\sum_{i\in I} \sum_{j\in I:m(j)=m(i)} \chi_{ij} \mathbb{E}\bigg[\Big(X_i \otimes X_j-\mathbb{E}[X_i \otimes X_j]\Big) : \nabla^2 f_\varepsilon (X)\bigg]
\\&~~~
+2\sum_{i\in I} \sum_{j\in I:m(j)<m(i)} \chi_{ij} \mathbb{E}\bigg[\Big(X_i \otimes X_j-\mathbb{E}[X_i \otimes X_j]\Big) : \nabla^2 f_\varepsilon (X)\bigg]
\\&~~~
+\sum_{i\in I}
\mathbb{E}\bigg[X_i \cdot \bigg(\nabla f_\varepsilon(X)
-\nabla f_\varepsilon\Big(X-\sum_{j\in I} \chi_{ij} X_j\Big)
-\Big(\sum_{j\in I} \chi_{ij} X_j\Big) \cdot \nabla^2 f_\varepsilon (X)\bigg)\bigg].
\end{align*}
Using the fact that for $m(i)\geq m(j)$ by definition of $\chi_{ijk}$ and $i^m(j)$ the quantities $X_i \otimes X_j$ and $X-\sum_{k\in I}\chi_{ii^{m(i)}(j)k} X_k$ are stochastically independent (recall that $\chi_{ijk}=1$ implies $\chi_{ii^{m(i)}(j)k}=1$ for $m(j)< m(i)$) and making use of the fact that $\chi_{ij}=\chi_{ij}\chi_{ii^{m(i)}(j)}$, we infer
\begin{align*}
&\mathbb{E}\big[(x \cdot \nabla f_\varepsilon)(X)\big]-\mathbb{E}[(\nabla \cdot \Lambda \nabla f_\varepsilon)(X)]
\\
&=\sum_{i\in I} \sum_{j\in I:m(j)=m(i)} \chi_{ij} \mathbb{E}\bigg[\Big(X_i \otimes X_j-\mathbb{E}[X_i \otimes X_j]\Big) 
\\&~~~~~~~~~~~~~~~~~~~~~~~~~~~~~~~~~~~~~~~~~~
: \bigg(\nabla^2 f_\varepsilon (X)
-\nabla^2 f_\varepsilon \Big(X-\sum_{k\in I}\chi_{ijk} X_k\Big)\bigg)\bigg]
\\&~~~
+2\sum_{i\in I} \sum_{j\in I:m(j)<m(i)} \chi_{ij} \chi_{ii^{m(i)}(j)} \mathbb{E}\bigg[\Big(X_i \otimes X_j-\mathbb{E}[X_i \otimes X_j]\Big)
\\&~~~~~~~~~~~~~~~~~~~~~~~~~~~~~~~~~~~~~~~~~~
: \bigg(\nabla^2 f_\varepsilon (X)-\nabla^2 f_\varepsilon \Big(X-\sum_{k\in I}\chi_{ii^{m(i)}(j)k} X_k\Big)\bigg)\bigg]
\\&~~~
+\sum_{i\in I}
\mathbb{E}\bigg[X_i \cdot \bigg(\nabla f_\varepsilon(X)-\nabla f_\varepsilon\Big(X-\sum_{j\in I} \chi_{ij} X_j\Big)
-\Big(\sum_{j\in I} \chi_{ij} X_j\Big) \cdot \nabla^2 f_\varepsilon (X)\bigg)\bigg].
\end{align*}
We now split the sum in the second term on the right-hand side by introducing the additional variable $l:=i^{m(i)}(j)$. The reason for introducing this additional splitting is that the sum $\sum_{j\in I:m(j)<m(i),i^{m(i)}(j)=l} \chi_{ij} \big(X_i \otimes X_j-\mathbb{E}[X_i \otimes X_j]\big)$ is subject to a better estimate than obtained by a standard triangle inequality, as one may exploit the stochastic independence of many terms in the sum. We deduce
\begin{align*}
&\mathbb{E}\big[(x \cdot \nabla f_\varepsilon)(X)\big]-\mathbb{E}[(\nabla \cdot \Lambda \nabla f_\varepsilon)(X)]
\\
&=\sum_{i\in I} \sum_{j\in I:m(j)=m(i)} \chi_{ij} \mathbb{E}\bigg[\Big(X_i \otimes X_j-\mathbb{E}[X_i \otimes X_j]\Big) 
\\&~~~~~~~~~~~~~~~~~~~~~~~~~~~~~~~~~~~~~~~~~~
: \bigg(\nabla^2 f_\varepsilon (X)
-\nabla^2 f_\varepsilon \Big(X-\sum_{k\in I}\chi_{ijk} X_k\Big)\bigg)\bigg]
\\&~~~
+2\sum_{i\in I} \sum_{l\in I:m(l)=m(i)} \chi_{il}
\mathbb{E}\Bigg[\sum_{j\in I:m(j)<m(i),i^{m(i)}(j)=l} \chi_{ij} \Big(X_i \otimes X_j-\mathbb{E}[X_i \otimes X_j]\Big)
\\&~~~~~~~~~~~~~~~~~~~~~~~~~~~~~~~~~~~~~~~~~~~~~~~~~~~~~
: \bigg(\nabla^2 f_\varepsilon (X)-\nabla^2 f_\varepsilon \Big(X-\sum_{k\in I}\chi_{ilk} X_k\Big)\bigg)\Bigg]
\\&~~~
+\sum_{i\in I}
\mathbb{E}\bigg[X_i \cdot \bigg(\nabla f_\varepsilon(X)-\nabla f_\varepsilon\Big(X-\sum_{j\in I} \chi_{ij} X_j\Big)
-\Big(\sum_{j\in I} \chi_{ij} X_j\Big) \cdot \nabla^2 f_\varepsilon (X)\bigg)\bigg].
\end{align*}
We intend to use the Taylor formula respectively the definition of the oscillation $\osc$ to bound each of the three terms on the right-hand side. For example, the terms in the first sum may either be estimated as
\begin{align*}
&\bigg|\Big(X_i \otimes X_j-\mathbb{E}[X_i \otimes X_j]\Big)
: \bigg(\nabla^2 f_\varepsilon (X)
-\nabla^2 f_\varepsilon \Big(X-\sum_{k\in I}\chi_{ijk} X_k\Big)\bigg)\bigg|
\\&
\leq \big|X_i \otimes X_j-\mathbb{E}[X_i \otimes X_j]\big| \cdot \Big|\sum_{k\in I}\chi_{ijk} X_k\Big| \cdot \sup_{|z|\leq |\sum_{k\in I}\chi_{ijk} X_k|} |\nabla^3 f_\varepsilon(X+z)|
\end{align*}
or as
\begin{align*}
&\bigg|\Big(X_i \otimes X_j-\mathbb{E}[X_i \otimes X_j]\Big)
: \bigg(\nabla^2 f_\varepsilon (X)
-\nabla^2 f_\varepsilon \Big(X-\sum_{k\in I}\chi_{ijk} X_k\Big)\bigg)\bigg|
\\&
\leq \big|X_i \otimes X_j-\mathbb{E}[X_i \otimes X_j]\big| \cdot \big(\osc_{|\sum_{k\in I}\chi_{ijk} X_k|} \nabla^2 f_\varepsilon\big)(X).
\end{align*}
Using the abbreviations $Z_{ij}$, $Z_i$, $Y_{il}$, and $W_{ij}$ from \eqref{NormalApproximationAbbreviations1}
and distinguishing the cases $|Z_{ij}|>\bar Z_{ij}$ and  $|Z_{ij}|\leq \bar Z_{ij}$ (and $|Z_i|>\bar Z_i$ and $|Z_i|\leq \bar Z_i$, and so forth, for fixed but arbitrary constants $\bar X_i$, $\bar Z_{ij}$, $\bar Z_i$, $\bar Y_{il}$, $\bar W_{ij}$) as well as (in the latter cases) the cases $m(i)\le \ell$ and $m(i)>\ell$, we obtain by treating the other sums analogously
\begin{align}
\label{Bound123}
&\bigg|-\mathbb{E}[(\nabla \cdot \Lambda \nabla f_\varepsilon)(X)]+\mathbb{E}\big[(x \cdot \nabla f_\varepsilon)(X)\big]\bigg|
\\
&\leq
\nonumber
\sum_{i\in I,m(i)\leq\ell} \sum_{j\in I:m(j)=m(i)} \chi_{ij} \mathbb{E}\Big[\bar W_{ij} \bar Z_{ij} \sup_{|z|\leq \bar Z_{ij}} |\nabla^3 f_\varepsilon (X+z)|\Big]
\\&~~~
\nonumber
+\sum_{i\in I,m(i)>\ell} \sum_{j\in I:m(j)=m(i)} \chi_{ij} \mathbb{E}\Big[\bar W_{ij}\, (\osc_{\bar Z_{ij}} \nabla^2 f_\varepsilon) (X)\Big]
\\&~~~
\nonumber
+\sum_{i\in I} \sum_{j\in I:m(j)=m(i)} \chi_{ij} \mathbb{E}\Big[|W_{ij}| |Z_{ij}| (\chi_{|Z_{ij}|>\bar Z_{ij}}+\chi_{|W_{ij}|> \bar W_{ij}}) \sup_{z\in \mathbb{R}^N} |\nabla^3 f_\varepsilon(z)|\Big]
\\&~~~
\nonumber
+2\sum_{i\in I,m(i)\leq\ell} \, \sum_{l\in I:m(l)=m(i)} \chi_{il}
\mathbb{E}\Big[\bar Y_{il}\,
\bar Z_{il} \sup_{|z|\leq \bar Z_{il}} |\nabla^3 f_\varepsilon| (X+z)\Big]
\\&~~~
\nonumber
+2\sum_{i\in I,m(i)>\ell} \, \sum_{l\in I:m(l)=m(i)} \chi_{il}
\mathbb{E}\Big[\bar Y_{il}\,
(\osc_{\bar Z_{il}} \nabla^2 f_\varepsilon) (X)\Big]
\\&~~~
\nonumber
+2\sum_{i\in I} \sum_{l\in I:m(l)=m(i)} \chi_{il}
\mathbb{E}\Big[|Y_{il}|\,
|Z_{il}|(\chi_{|Z_{il}|>\bar Z_{il}}+\chi_{|Y_{il}|>\bar Y_{il}}) \sup_{z\in \mathbb{R}^N} |\nabla^3 f_\varepsilon(z)|\Big]
\\&~~~
\nonumber
+\sum_{i\in I,m(i)\leq\ell}
\mathbb{E}\Big[ \bar X_i
\bar Z_{i}^2 \sup_{|z|\leq \bar Z_{i}} |\nabla^3 f_\varepsilon| (X+z)\Big]
\\&~~~
\nonumber
+\sum_{i\in I,m(i)>\ell}
\mathbb{E}\Big[\bar X_i
\bar Z_{i} (\osc_{\bar Z_{i}} \nabla^2 f_\varepsilon) (X)\Big]
\\&~~~
\nonumber
+\sum_{i\in I}
\mathbb{E}\Big[|X_i|
|Z_{i}|^2 (\chi_{|Z_i|>\bar Z_i}+\chi_{|X_i|>\bar X_i}) \sup_{z\in \mathbb{R}^N} |\nabla^3 f_\varepsilon(z)|\Big].
\end{align}
Reordering terms and using the definitions of $H_\delta^\varepsilon$ and $H_{\varepsilon,\delta}'$ in Proposition~\ref{PropositionSolutionSteinEquation} as well as the bound \eqref{BoundThirdDerivativeLinfty}, we deduce
\begin{align*}
&\bigg|-\mathbb{E}[(\nabla \cdot \Lambda \nabla f_\varepsilon)(X)]+\mathbb{E}\big[(x \cdot \nabla f_\varepsilon)(X)\big]\bigg|
\\
&\leq
\sum_{i\in I,m(i)\leq\ell} \sum_{j\in I:m(j)=m(i)} \chi_{ij} \bar W_{ij} \bar Z_{ij} \mathbb{E}\big[H_{\varepsilon,\bar Z_{ij}}'(X)\big]
\\&~~~
+2\sum_{i\in I,m(i)\leq\ell} \, \sum_{l\in I:m(l)=m(i)} \chi_{il}
\bar Y_{il}\, \bar Z_{il} \mathbb{E}\big[H_{\varepsilon,\bar Z_{il}}' (X)\big]
\\&~~~
+\sum_{i\in I,m(i)\leq\ell}
\bar X_i
\bar Z_{i}^2 \mathbb{E} \big[H_{\varepsilon, \bar Z_{i}}'(X)\big]
\\&~~~
+\sum_{i\in I,m(i)>\ell} \sum_{j\in I:m(j)=m(i)} \chi_{ij} \bar W_{ij} \mathbb{E}\big[H^\varepsilon_{\bar Z_{ij}}(X)\big]
\\&~~~
+2\sum_{i\in I,m(i)>\ell} \sum_{l\in I:m(l)=m(i)} \chi_{il} \bar Y_{il}
\mathbb{E}\big[
H^\varepsilon_{\bar Z_{il}} (X)\big]
\\&~~~
+\sum_{i\in I,m(i)>\ell}\bar X_i \bar Z_{i} 
\mathbb{E}\big[H^\varepsilon_{\bar Z_{i}} (X)\big]
\\&~~~
+15 \Lambda^{-1} \varepsilon^{-N}\sum_{i\in I} \Bigg(
\sum_{j\in I:m(j)=m(i)} \chi_{ij} \mathbb{E}\Big[|W_{ij}| |Z_{ij}| (\chi_{|Z_{ij}|>\bar Z_{ij}}+\chi_{|W_{ij}|> \bar W_{ij}})
\\&~~~~~~~~~~~~~~~~~~~~~~~~~~~~~~~
+2\sum_{l\in I:m(l)=m(i)} \chi_{il}
\mathbb{E}\Big[|Y_{il}|\,
|Z_{il}|(\chi_{|Z_{il}|>\bar Z_{il}}+\chi_{|Y_{il}|>\bar Y_{il}}) \Big]
\\&~~~~~~~~~~~~~~~~~~~~~~~~~~~~~~~
+\mathbb{E}\Big[|X_i| |Z_{i}|^2 (\chi_{|Z_i|>\bar Z_i}+\chi_{|X_i|>\bar X_i})\Big]
\Bigg).
\end{align*}
For $L\geq 4^N (|\Lambda^{1/2}|^N \delta^{-N} +1)$, by Proposition~\ref{PropositionSolutionSteinEquation} we may control
\begin{align*}
&\mathbb{E}[H_\delta^\varepsilon(X)]
\\&
=
\mathbb{E}[H_\delta^\varepsilon(X)]
-\int_{\mathbb{R}^N} H_\delta^\varepsilon(z)\mathcal{N}_\Lambda(z) \,dz
+\int_{\mathbb{R}^N} H_\delta^\varepsilon(z)\mathcal{N}_\Lambda(z) \,dz
\\&
\stackrel{\eqref{BoundMollifiedSecondDerivativeOscInFunctionClass}}{\leq} 2\cdot 10^4 N^{5/2} |\Lambda^{-1}| |\log \varepsilon| \mathcal{D}^L(X,\mathcal{N}_\Lambda)
+\int_{\mathbb{R}^N} H_\delta^\varepsilon(z)\mathcal{N}_\Lambda(z) \,dz
\\&
\stackrel{\eqref{BoundMollifiedSecondDerivativeOscLimitDistribution}}{\leq} 2\cdot 10^4 N^{5/2} |\Lambda^{-1}| |\log \varepsilon| \mathcal{D}^L(X,\mathcal{N}_\Lambda)
+10^2 N^{3/2} |\Lambda^{-1}| \, |\log \varepsilon| \, \delta
\\&
\leq C N^{5/2} |\Lambda^{-1}| |\log \varepsilon| \mathcal{D}^L(X,\mathcal{N}_\Lambda)
+C N^{3/2} |\Lambda^{-1}| \, |\log \varepsilon| \, \delta.
\end{align*}
Similarly, for $L\geq  4^N (|\Lambda^{1/2}|^N \delta^{-N} +1) + \varepsilon^{-N}$ we have by Proposition~\ref{PropositionSolutionSteinEquation}
\begin{align*}
&\mathbb{E}[H_{\varepsilon,\delta}'(X)]
\\&
=
\mathbb{E}[H_{\varepsilon,\delta}'(X)]
-\int_{\mathbb{R}^N} H_{\varepsilon,\delta}'(z)\mathcal{N}_\Lambda(z) \,dz
+\int_{\mathbb{R}^N} H_{\varepsilon,\delta}'(z)\mathcal{N}_\Lambda(z) \,dz
\\&
\leq
\frac{10^4 N^3 |\Lambda^{-1/2}|^3}{\varepsilon} \mathcal{D}^L(X,\mathcal{N}_\Lambda)
+10^2 N^3 |\Lambda^{-1/2}|^2 \Big(|\log \varepsilon|+\frac{|\Lambda^{-1/2}|\delta}{\varepsilon}\Big)
\\&
\leq
\frac{C N^3 |\Lambda^{-1/2}|^3}{\varepsilon} \mathcal{D}^L(X,\mathcal{N}_\Lambda)
+C N^3 |\Lambda^{-1/2}|^2 \Big(|\log \varepsilon|+\frac{|\Lambda^{-1/2}|\delta}{\varepsilon}\Big).
\end{align*}
Using these two estimates in the preceding bound and collecting terms, we obtain
\begin{align*}
&\bigg|-\mathbb{E}[(\nabla \cdot \Lambda \nabla f_\varepsilon)(X)]+\mathbb{E}\big[(x \cdot \nabla f_\varepsilon)(X)\big]\bigg|
\\
&\leq
\mathcal{D}^L(X,\mathcal{N}_\Lambda)
\frac{CN^3 |\Lambda^{-1/2}|^3}{\varepsilon}
\sum_{i\in I,m(i)\leq\ell} \bigg(\sum_{j\in I:m(j)=m(i)} \chi_{ij} \bar W_{ij} \bar Z_{ij}
\\&~~~~~~~~~~~~~~~~~~~~~~~~~~~~~~~~~~~~~~~~~~~~~~~~~~~~~~~
+2 \sum_{l\in I:m(l)=m(i)} \chi_{il} \bar Y_{il}\, \bar Z_{il}
+\bar X_i \bar Z_{i}^2\bigg)
\\&~~~
+\frac{CN^3 |\Lambda^{-1/2}|^3}{\varepsilon}
\sum_{i\in I,m(i)\leq\ell} \bigg(\sum_{j\in I:m(j)=m(i)} \chi_{ij} \bar W_{ij} \bar Z_{ij}^2 
\\&~~~~~~~~~~~~~~~~~~~~~~~~~~~~~~~~~~~~~~~~~~~~~~~~~~~~~~~
+2 \sum_{l\in I:m(l)=m(i)} \chi_{il} \bar Y_{il}\, \bar Z_{il}^2
+\bar X_i \bar Z_{i}^3\bigg)
\\&~~~
+CN^3 |\Lambda^{-1/2}|^2 |\log \varepsilon|
\sum_{i\in I,m(i)\leq\ell} \bigg(\sum_{j\in I:m(j)=m(i)} \chi_{ij} \bar W_{ij} \bar Z_{ij}
\\&~~~~~~~~~~~~~~~~~~~~~~~~~~~~~~~~~~~~~~~~~~~~~~~~~~~~~~~
+2 \sum_{l\in I:m(l)=m(i)} \chi_{il} \bar Y_{il}\, \bar Z_{il}
+\bar X_i \bar Z_{i}^2\bigg)
\\&~~~
+\mathcal{D}^L(X,\mathcal{N}_\Lambda)
\cdot CN^{5/2} |\Lambda^{-1}| |\log \varepsilon|
\sum_{i\in I,m(i)>\ell}
\bigg(\sum_{j\in I:m(j)=m(i)} \chi_{ij} \bar W_{ij} 
\\&~~~~~~~~~~~~~~~~~~~~~~~~~~~~~~~~~~~~~~~~~~~~~~~~~~~~~~~~~~~~~~
+2 \sum_{l\in I:m(l)=m(i)} \chi_{il} \bar Y_{il}
+\bar X_i \bar Z_{i} 
\bigg)
\\&~~~
+C N^{3/2} |\Lambda^{-1}| |\log \varepsilon |
\sum_{i\in I,m(i)>\ell}
\bigg(\sum_{j\in I:m(j)=m(i)} \chi_{ij} \bar W_{ij} \bar Z_{ij}
\\&~~~~~~~~~~~~~~~~~~~~~~~~~~~~~~~~~~~~~~~~~~~~~~~~~~~~~~~
+2\sum_{l\in I:m(l)=m(i)} \chi_{il} \bar Y_{il} \bar Z_{il}
+\bar X_i \bar Z_{i}^2 
\bigg)
\\&~~~
+15 |\Lambda^{-1}| \varepsilon^{-N}\sum_{i\in I} \Bigg(
\sum_{j\in I:m(j)=m(i)} \chi_{ij} \mathbb{E}\Big[|W_{ij}| |Z_{ij}| (\chi_{|Z_{ij}|>\bar Z_{ij}}+\chi_{|W_{ij}|> \bar W_{ij}})
\\&~~~~~~~~~~~~~~~~~~~~~~~~~~~~~~~
+2\sum_{l\in I:m(l)=m(i)} \chi_{il}
\mathbb{E}\Big[|Y_{il}|\,
|Z_{il}|(\chi_{|Z_{il}|>\bar Z_{il}}+\chi_{|Y_{il}|>\bar Y_{il}}) \Big]
\\&~~~~~~~~~~~~~~~~~~~~~~~~~~~~~~~
+\mathbb{E}\Big[|X_i| |Z_{i}|^2 (\chi_{|Z_i|>\bar Z_i}+\chi_{|X_i|>\bar X_i})\Big]
\Bigg).
\end{align*}
Plugging in this bound into \eqref{EstimateOnDL} and using the abbreviations \eqref{NormalApproximationAbbreviations2} for a suitable choice of the constant $C$, we obtain
\begin{align*}
&\mathcal{D}^{\bar L}(X,\mathcal{N}_\Lambda)
\\
&\leq
20\sqrt{N} |\Lambda^{1/2}| \varepsilon
+\frac{1}{2} \mathcal{R}_{low} + \frac{1}{2} \mathcal{R}_{alllevel} + \frac{1}{2} \mathcal{R}_{tail}
\\&~~~
+\frac{1}{2} \mathcal{D}^{\bar L}(X,\mathcal{N}_\Lambda)
\frac{CN^{9/2} |\Lambda^{-1/2}|^3}{\varepsilon}
\sum_{i\in I,m(i)\leq\ell} \bigg(\sum_{j\in I:m(j)=m(i)} \chi_{ij} \bar W_{ij} \bar Z_{ij}
\\&~~~~~~~~~~~~~~~~~~~~~~~~~~~~~~~~~~~~~~~~~~~~~~~~~~~~~~~
+2 \sum_{l\in I:m(l)=m(i)} \chi_{il} \bar Y_{il}\, \bar Z_{il}
+\bar X_i \bar Z_{i}^2\bigg)
\\&~~~
+\frac{1}{2} \mathcal{D}^{\bar L}(X,\mathcal{N}_\Lambda)
\cdot CN^{4} |\Lambda^{-1}| |\log \varepsilon|
\sum_{i\in I,m(i)>\ell}
\bigg(\sum_{j\in I:m(j)=m(i)} \chi_{ij} \bar W_{ij} 
\\&~~~~~~~~~~~~~~~~~~~~~~~~~~~~~~~~~~~~~~~~~~~~~~~~~~~~~~~~~~~~~~
+2 \sum_{l\in I:m(l)=m(i)} \chi_{il} \bar Y_{il}
+\bar X_i \bar Z_{i} 
\bigg).
\end{align*}
Defining the $C$ in the condition \eqref{ConditionForNormalApproximation} to be precisely the $C$ in the previous estimate, we see that the condition \eqref{ConditionForNormalApproximation} allows to absorb the last two terms on the right-hand side in the preceding estimate, thereby proving \eqref{NormalApproximationEstimate}.
\end{proof}

\section{Normal approximation for multilevel local dependence structures}

We now proceed to the proof of our normal approximation result Theorem~\ref{TheoremNormalApproximationMultilevelLocalDependence}. The proof is essentially a reduction to the more abstract normal approximation result provided by Theorem~\ref{NormalApproximation}.
\begin{proof}[Proof of Theorem~\ref{TheoremNormalApproximationMultilevelLocalDependence}]
{\bf Step 1: Proof of the estimate \eqref{NormalApproximationMultilevel}.}
We first derive the normal approximation result in the case of nondegenerate $\Var X$, i.\,e.\ estimate \eqref{NormalApproximationMultilevel}. We will restrict ourselves to the case of $L$-periodic random fields $a$ (i.\,e.\ the case of assumption (A')), as the proof in the non-periodic case is entirely analogous.

Let $X_y^m$ be the random variables from Definition~\ref{ConditionRandomVariable}. To derive our result on normal approximation, we apply Theorem~\ref{NormalApproximation} in the following way: We define the index set $I$ to consist of all pairs $i=(m,y)$ with $0\leq m\leq 1+\log_2 L$ and $y\in 2^m \mathbb{Z}^d \cap [0,L)^d$, and set $X_i:=X_y^m$ as well as $m(i):=m$ for $i=(m,y)$. Furthermore, let us introduce the notation $y_i:=y$ for $i=(m,y)$. Finally, for $j\in I$ and $n>m(j)$ we set $i^n(j):=(n,\tilde y)$, where $\tilde y$ is given by $2^n \lfloor \frac{y_j}{2^n} \rfloor$ (where $\lfloor\cdot\rfloor$ denotes the component-wise floor).

We equip this collection of random variables $X_i$ with the following dependence structure: We set
\begin{align*}
\chi_{ij}=
\begin{cases}
1&\text{if }\dist_\infty^\per(y_i+K \log L[-2^{m(i)},2^{m(i)}]^d,y_j+K \log L[-2^{m(j)},2^{m(j)}]^d)
\\&~~~~~~~~~~~~~~~~~~~~~~~~~~~~~~
\leq 2\cdot 2^{\max\{m(i),m(j)\}} K\log L ,
\\
0&\text{otherwise},
\end{cases}
\end{align*}
and
\begin{align*}
\chi_{ijk}=
\begin{cases}
1&\text{if }\dist_\infty^\per(y_i+K \log L[-2^{m(i)},2^{m(i)}]^d,y_k+K \log L[-2^{m(k)},2^{m(k)}]^d)
\\&~~~~~~~~~~~~~~~~~~~~~~~~~~~~~~
\leq 2\cdot 2^{\max\{m(i),m(j),m(k)\}} K\log L ,
\\
1&\text{if }\dist_\infty^\per(y_j+K \log L[-2^{m(j)},2^{m(j)}]^d,y_k+K \log L[-2^{m(k)},2^{m(k)}]^d)
\\&~~~~~~~~~~~~~~~~~~~~~~~~~~~~~~
\leq 2\cdot 2^{\max\{m(i),m(j),m(k)\}} K\log L ,
\\
0&\text{otherwise},
\end{cases}
\end{align*}
where we recall that $\dist_\infty^\per(U,V)$ denotes the periodicity-adjusted maximum-norm distance, i.\,e.\ $\dist_\infty^\per(U,V):=\inf_{x\in U,y\in V} \inf_{k\in \mathbb{Z}^d} |x-y-L k|$.

We readily verify by Definition~\ref{ConditionRandomVariable} that $\chi_{ij}=0$ indeed entails independence of $X_i$ and the collection of all $X_j$ with $\chi_{ij}=0$ (recall that by Definition~\ref{ConditionRandomVariable} each $X_i$ only depends on $a|_{y_i+K \log L[-2^{m(i)},2^{m(i)}]^d}$ and that $a$ has finite range of dependence $1$). Similarly, the pair $(X_i,X_j)$ is independent from the family of all $X_k$ with $\chi_{ijk}=0$.
It is furthermore easy to verify that $\chi_{ij}$ and $\chi_{ijk}$ satisfy the further conditions on a dependence structure as defined in Section~\ref{SectionNormalApproximationMain}, namely the symmetry $\chi_{ij}=\chi_{ji}$ and $\chi_{ijk}=\chi_{jik}$ as well as the conditions $\chi_{i i^n(j)}\geq \chi_{ij}$ and $\chi_{i i^n(j)k}\geq \chi_{ijk}$ for $n>m(j)$ (the latter two properties follow from $y_j+K\log L [-2^{m(j)},2^{m(j)}] \subset y_{i^n(j)}+K\log L [-2^n,2^n]$, which one deduces directly from $\dist_\infty(y_j,y_{i^n(j)})\leq 2^n$, which in turn follows from our definition of $y_{i^n(j)}$).

With these choices, we have
\begin{align*}
X=\sum_{i\in I}X_i.
\end{align*}
Using the notation of Theorem~\ref{NormalApproximation} it is now the next goal to bound $Z_{ij}$, $Z_i$, $Y_{il}$, and $W_{ij}$. By \eqref{BoundMultilevelDependenceStructure}, we trivially have for
\begin{align*}
W_{ij}=X_i \otimes X_j-\mathbb{E}[X_i \otimes X_j]
\end{align*}
the bound
\begin{subequations}
\begin{align}
\label{BoundW}
||W_{ij}||_{\exp^{\gamma/2}} \leq C(\gamma,N) B^2 L^{-2d}.
\end{align}
Rewriting (with $p(m)$ denoting the smallest exponent with $2^{p(m)}>4K\log L \,2^m$, but at most $p(m)=\lfloor\log_2 L\rfloor+1$)
\begin{align*}
Z_{i}&=\sum_{j\in I:\chi_{ij}=1} X_j
=\sum_{m=0}^{1+\log_2 L} \sum_{j \in I:\chi_{ij}=1,m(j)=m} X_j
\\&
=\sum_{m=0}^{m(i)} \sum_{\substack{y\in 2^m \mathbb{Z}^d\cap[0,L)^d: \dist^\per_\infty(y+K \log L\cdot 2^{m}[-1,1]^d,y_i+K \log L \cdot 2^{m(i)}[-1,1]^d)\\ \leq 2\cdot 2^{m(i)}
K \log L}} X_y^m
\\&~~~
+\sum_{m=m(i)+1}^{1+\log_2 L} \sum_{\substack{y\in 2^m \mathbb{Z}^d\cap[0,L)^d: \dist^\per_\infty(y+K \log L\cdot 2^{m}[-1,1]^d,y_i+K \log L \cdot 2^{m(i)}[-1,1]^d)\\ \leq 2\cdot 2^{m} 
K \log L}} X_y^m
\\&
=\sum_{m=0}^{m(i)} \sum_{z\in 2^m \mathbb{Z}^d \cap [0,2^{p(m)})^d} \sum_{\substack{y\in 2^{p(m)} \mathbb{Z}^d\cap[0,L)^d:y+z\in [0,L)^d,
\\
\dist^\per_\infty(y+z+K \log L\cdot 2^{m}[-1,1]^d,y_i+K \log L \cdot 2^{m(i)}[-1,1]^d)\\ \leq 2\cdot 2^{m(i)}
K \log L}} X_{y+z}^m
\\&~~~
+\sum_{m=m(i)+1}^{1+\log_2 L} \sum_{\substack{y\in 2^m \mathbb{Z}^d\cap[0,L)^d: \dist^\per_\infty(y+K \log L\cdot 2^{m}[-1,1]^d,y_i+K \log L \cdot 2^{m(i)}[-1,1]^d)\\ \leq 2\cdot 2^{m}
K \log L}} X_y^m,
\end{align*}
we see that the sum on each level $m\leq m(i)$ may be written as the sum of $\leq C(d)(K \log L)^d$ sums of $\leq C(d) (2^{m(i)} K \log L / 2^{p(m)})^d = C(d) (2^{m(i)-m})^d$ independent random variables (to the latter sums we may apply a concentration estimate), while the sum on the levels $m>m(i)$ consists only of $\leq C(d)(K \log L)^d$ terms.
By Lemma~\ref{ConcentrationStretchedExponential} and \eqref{BoundMultilevelDependenceStructure} we deduce for $\gamma_1:=\gamma/(\gamma+1)$
\begin{align*}
||Z_i||_{\exp^{\gamma_1}} &\leq \sum_{m=0}^{m(i)} C(d,\gamma,N)(K \log L)^d \cdot C(\gamma) \big((2^{m(i)-m})^d\big)^{1/2} \cdot BL^{-d}
\\&~~~
+\sum_{m=m(i)+1}^{1+\log_2 L} C(d) (K \log L)^d B L^{-d}
\end{align*}
and therefore
\begin{align}
\label{BoundZ}
||Z_i||_{\exp^{\gamma_1}}
\leq C(d,\gamma,N) B (K \log L)^{d} (2^{m(i)})^{d/2} L^{-d} + C(d) B K^d (\log L)^{d+1} L^{-d}.
\end{align}
In a very similar way (note that for our choice of $\chi_{ijk}$ and $\chi_{ij}$ we have $\chi_{ijk}\leq \chi_{ik}+\chi_{jk}$), we can estimate
\begin{align*}
Z_{ij}=\sum_{k\in I:\chi_{ijk}=1} X_k
\end{align*}
as
\begin{align}
\label{BoundZ2}
||Z_{ij}||_{\exp^{\gamma_1}} \leq C(d,\gamma,N) B (K \log L)^{d} (2^{\max\{m(i),m(j)\}})^{d/2} L^{-d}
+ C(d) B K^d (\log L)^{d+1} L^{-d}.
\end{align}
It remains to bound
\begin{align*}
Y_{il}=\sum_{j\in I:m(j)<m(i),i^{m(i)}(j)=l,\chi_{ij}=1} \Big(X_i \otimes X_j-\mathbb{E}[X_i \otimes X_j]\Big).
\end{align*}
Note that
\begin{align*}
Y_{il}= X_i \otimes \hat Y_{il} - \mathbb{E}[X_i\otimes \hat Y_{il}],
\end{align*}
where
\begin{align*}
\hat Y_{il}:=\sum_{j\in I:m(j)<m(i),i^{m(i)}(j)=l,\chi_{ij}=1} X_j
=\sum_{m=0}^{m(i)-1} \sum_{j\in I:m(j)=m,i^{m(i)}(j)=l,\chi_{ij}=1} X_j.
\end{align*}
For each level $m$, the inner sum may again be written as a sum of $C(d)(K \log L)^d$ sums of $\lesssim \frac{(2^{m(i)-m})^d}{(K \log L)^d}$ independent random variables (recall that $y_{i^{m}(j)}=2^m \lfloor \frac{y_j}{2^m} \rfloor$). We therefore get by the concentration estimate in Lemma~\ref{ConcentrationStretchedExponential} and \eqref{BoundMultilevelDependenceStructure} for $\gamma_1:=\gamma/(\gamma+1)$
\begin{align*}
||\hat Y_{il}||_{\exp^{\gamma_1}} &\leq \sum_{m=0}^{m(i)-1} C(d) (K \log L)^d C(\gamma,N) \sqrt{(2^{m(i)-m})^d} \cdot B L^{-d}
\\&
\leq C(d,\gamma,N) B (K\log L)^{d} (2^{m(i)})^{d/2} L^{-d}.
\end{align*}
As a consequence, by \eqref{BoundMultilevelDependenceStructure} and Lemma~\ref{CalculusStretchedExponential}a we obtain for $\gamma_2=1/(1/\gamma+1/\gamma_1)=\gamma/(\gamma+2)$
\begin{align}
\label{BoundY}
||Y_{il}||_{\exp^{\gamma_2}} \leq C(d,\gamma,N) B^2 (K\log L)^{d} (2^{m(i)})^{d/2} L^{-2d}.
\end{align}
\end{subequations}
Choosing now the constants $\bar X_i$, $\bar Z_{ij}$, $\bar Z_i$, $\bar Y_{il}$, $\bar W_{il}$, for some $S\geq 1$ as
\begin{subequations}
\label{ChoicesBarQuantities}
\begin{align}
\bar X_i &:= C(d,\gamma,N) B (S \log L)^{1/\gamma} L^{-d},
\\
\bar W_{ij} &:= C(d,\gamma,N) B^2 (S \log L)^{2/\gamma} L^{-2d} ,
\\
\bar Z_{i} &:= C(d,\gamma,N) B (S \log L)^{1/\gamma_1} (K \log L)^{d+1} (2^{m(i)})^{d/2} L^{-d},
\\
\bar Z_{ij} &:= C(d,\gamma,N) B (S \log L)^{1/\gamma_1} (K \log L)^{d+1} (2^{\max\{m(i),m(j)\}})^{d/2} L^{-d},
\\
\bar Y_{il} &:= C(d,\gamma,N) B^2 (S \log L)^{1/\gamma_2} (K\log L)^{d} (2^{m(i)})^{d/2} L^{-2d},
\end{align}
\end{subequations}
we obtain that the condition \eqref{ConditionForNormalApproximation} is certainly satisfied if
\begin{align*}
&\sum_{i\in I,m(i)\leq\ell} \frac{N^{9/2} |\Lambda^{-1/2}|^3}{\varepsilon} \bigg(\sum_{\substack{j\in I:m(j)=m(i),\\ \chi_{ij}=1}} B^3 (S \log L)^{1/\gamma_2+1/\gamma_1} (K \log L)^{2d+1} (2^{m(i)})^{d} L^{-3d}
\\&~~~~~~~~~~~~~~~~~~~~~~~~~~~~~~~~~~
+B^3 (S \log L)^{1/\gamma+2/\gamma_1} (K \log L)^{2d+2} (2^{m(i)})^d L^{-3d}\bigg)
\\&~~~
+\sum_{i\in I,m(i)>\ell} N^{4} |\Lambda^{-1}| |\log \varepsilon|
\bigg(\sum_{\substack{j\in I:m(j)=m(i),\\ \chi_{ij}=1}} B^2 (S \log L)^{1/\gamma_2} (K \log L)^{d} (2^{m(i)})^{d/2} L^{-2d}
\\&~~~~~~~~~~~~~~~~~~~
+B^2 (S \log L)^{1/\gamma+1/\gamma_1} (K \log L)^{d+1} (2^{m(i)})^{d/2} L^{-2d}\bigg)
\\&~~~
\leq \frac{1}{C(d,\gamma,N)}.
\end{align*}
Note that the sum $\sum_{i\in I,m(i)=m}$ consists of the order of $(L/2^m)^{-d}$ terms and the sum $\sum_{j\in I:m(j)=m(i),\chi_{ij}=1}$ consists of the order of $C(d)(K \log L)^d$ terms. Thus, the condition \eqref{ConditionForNormalApproximation} is definitely satisfied if
\begin{align*}
&\sum_{m=0}^{\ell}C(d,\gamma,N) B^3 S^{1/\gamma_2+1/\gamma_1} \frac{K^{3d+2} (\log L)^{3d+1+1/\gamma_2+1/\gamma_1} |\Lambda^{-1/2}|^3 L^{-2d}}{\varepsilon}
\\&~~~
+ \sum_{m=\ell}^{1+\log_2 L} C(d,\gamma,N) B^2 S^{1/\gamma_2} K^{2d} |\Lambda^{-1}| |\log \varepsilon| (\log L)^{2d+1/\gamma_2} (2^m)^{-d/2} L^{-d}
\\&~~~
\leq \frac{1}{C}.
\end{align*}
This condition is satisfied for the choice
\begin{align}
\label{ChoiceEpsilon}
\varepsilon := C(d,\gamma,N) B^3 S^{1/\gamma_2+1/\gamma_1} K^{3d+2} (\log L)^{1+3d+1+1/\gamma_2+1/\gamma_1} |\Lambda^{-1/2}|^3 L^{-2d}
\end{align}
and $\ell$ as the smallest nonnegative integer with
\begin{align}
\label{ChoiceEll}
(2^\ell)^{d/2} \geq
C(d,\gamma,N) B^2 S^{1/\gamma_2} K^{2d} |\Lambda^{-1}| |\log (B^3 |\Lambda^{-1/2}|^3 L^{-2d})| (\log L)^{2d+1/\gamma_2} L^{-d}.
\end{align}
Note that the choice of $\varepsilon$ entails $\varepsilon\geq B^{3} |\Lambda^{-1/2}|^3 L^{-2d}$, which in turn implies by the bound $|\Lambda|\leq C(d,\gamma,N,K) B^2 L^{-d} |\log L|^{C(\gamma)}$ from Lemma~\ref{MultilevelVariableStretchedExponentialBound} that the lower bound
\begin{align}
\label{LowerBoundEps}
\varepsilon\geq c(d,\gamma,N,K) L^{-d/2} |\log L|^{C(d,\gamma)}
\end{align}
holds. Note furthermore that in case $\varepsilon\geq \frac{1}{2}$ with $\varepsilon$ as chosen in \eqref{ChoiceEpsilon} the assertion of our theorem \eqref{NormalApproximationMultilevel} becomes trivial: Indeed, for any random variable $X$ with $\mathbb{E}[X]=0$ we have the bound $\mathcal{D}(X,\mathcal{N}_\Lambda)\leq C (|\Var X|^{1/2}+ |\Lambda|^{1/2})$, which follows from
\begin{align*}
&|\mathbb{E}[\phi(X)]-\phi(0)|
\\
&\leq \mathbb{E}[\osc_{|X|} \phi(0)]
\\&
\leq \sum_{k=0}^\infty \dashint_{\{z\in \mathbb{R}^N:|z|\leq N|\Lambda^{1/2}|\}} \osc_{(k+1+N)|\Lambda^{1/2}|} \phi(z)\,dz ~\mathbb{P}[|\Lambda^{1/2}| k\leq |X|\leq |\Lambda^{1/2}| (k+1)]
\\&
\stackrel{\eqref{DefinitionClassPhi}}{\leq}  \sum_{k=0}^\infty C(N) |\Lambda^{1/2}| (k+1+N) ~\mathbb{P}[|\Lambda^{1/2}| k\leq |X|\leq |\Lambda^{1/2}| (k+1)]
\\&
\leq C |\Var X|^{1/2} + C |\Lambda^{1/2}|
\end{align*}
for any $\phi\in \Phi_\Lambda$ and the corresponding estimate for a Gaussian $\mathcal{N}_\Lambda$ in place of $X$.
We may therefore assume that the condition $\varepsilon\leq \frac{1}{2}$ -- which is required for the application of Theorem~\ref{NormalApproximation} -- is satisfied.

Let us now estimate the terms $\mathcal{R}_{lowlevel}$, $\mathcal{R}_{alllevel}$, and $\mathcal{R}_{tail}$ in Theorem~\ref{NormalApproximation}. We have by Lemma~\ref{CalculusStretchedExponential}b and our choices \eqref{ChoicesBarQuantities} as well as our bounds \eqref{BoundW}, \eqref{BoundZ}, \eqref{BoundZ2}, and \eqref{BoundY} (note that the inner sums in the next two lines contain at most $C(d)(K\log L)^d$ summands each, while the outer sum consists of $\leq L^d (2^{m})^{-d}$ summands of level $m(i)=m$)
\begin{align*}
\mathcal{R}_{tail}&=C |\Lambda^{-1}| N^{3/2} \varepsilon^{-N}
\sum_{i\in I} \Bigg(
\sum_{j\in I:m(j)=m(i),\chi_{ij}=1} \mathbb{E}\Big[|W_{ij}| |Z_{ij}| (\chi_{|Z_{ij}|>\bar Z_{ij}}+\chi_{|W_{ij}|> \bar W_{ij}})\Big]
\\&~~~~~~~~~~~~~~~~~~~~~~~~~~
+\sum_{l\in I:m(l)=m(i), \chi_{il}=1}
\mathbb{E}\Big[|Y_{il}|\,
|Z_{il}|(\chi_{|Z_{il}|>\bar Z_{il}}+\chi_{|Y_{il}|>\bar Y_{il}}) \Big]
\\&~~~~~~~~~~~~~~~~~~~~~~~~~~
+\mathbb{E}\Big[|X_i| |Z_{i}|^2 (\chi_{|Z_i|>\bar Z_i}+\chi_{|X_i|>\bar X_i})\Big]
\Bigg)
\\&
\leq
C |\Lambda^{-1}| N^{3/2} \varepsilon^{-N}
C(d) L^d (K \log L)^d
\\&~~~~~~~~~~~~\times
\bigg(
\max_{i,j} \mathbb{E}[\chi_{|Z_{ij}|>\bar Z_{ij}}+\chi_{|W_{ij}|>\bar W_{ij}}]^{1/2}
\max_{i,j}
\mathbb{E}[|W_{ij}|^2|Z_{ij}|^2]^{1/2}
\\&~~~~~~~~~~~~~~~~~~~
+\max_{i,j} \mathbb{E}[\chi_{|Z_{ij}|>\bar Z_{ij}}+\chi_{|Y_{ij}|>\bar Y_{ij}}]^{1/2}
\mathbb{E}[|Y_{ij}|^2 |Z_{ij}|^2]^{1/2}
\\&~~~~~~~~~~~~~~~~~~~
+\max_{i} \mathbb{E}[\chi_{|Z_i|>\bar Z_i}+\chi_{|X_{i}|>\bar X_{i}}]^{1/2}
\mathbb{E}[|X_i|^2 |Z_i|^4]^{1/2}
\bigg)
\\&
\leq
C |\Lambda^{-1}| N^{3/2} \varepsilon^{-N}
C(d) L^d (K \log L)^d
\times L^{-c(N,\gamma)S/2} (K\log L)^{C(d,\gamma)} B^3 L^{-3d}
\\&
\stackrel{\eqref{LowerBoundEps}}{\leq}
C(d,\gamma,K,N) |\Lambda^{-1}| B^3 L^{-5d}
\end{align*}
for $S$ chosen large enough (depending only on $d$, $\gamma$, and $N$).

Using also \eqref{ChoiceEpsilon}, we obtain first in case $\ell\geq 1$ (note that $S$ has now been chosen as a constant depending only on $d$, $\gamma$, and $N$; furthermore, note that for each fixed $i$ the inner sum in the next line runs over at most $C(d)(K \log L)^d$ elements)
\begin{align*}
\mathcal{R}_{lowlevel}&=
\frac{CN^{9/2} |\Lambda^{-1/2}|^3}{\varepsilon}\sum_{i\in I,m(i)\leq\ell} \bigg(\sum_{j\in I:m(j)=m(i), \chi_{ij}=1} \big(\bar W_{ij} \bar Z_{ij}^2 
+2 \bar Y_{ij}\, \bar Z_{ij}^2 \big)
+\bar X_i \bar Z_{i}^3\bigg)
\\&
\leq \frac{C(d,\gamma,K,N) |\Lambda^{-1/2}|^3}{\varepsilon} \sum_{m=0}^{\ell} \Big(\frac{L}{2^m}\Big)^d \cdot (K \log L)^d \cdot B^4 (\log L)^{C(d,\gamma)} (2^m)^{3d/2} L^{-4d}
\\&
\stackrel{\eqref{ChoiceEpsilon}}{\leq} C(d,\gamma,K,N) B \sum_{m=0}^{\ell} \Big(\frac{L}{2^m}\Big)^d \cdot (K \log L)^d \cdot (\log L)^{C(d,\gamma)} (2^m)^{3d/2} L^{-2d}
\\&
\leq C(d,\gamma,K,N) B \cdot (\log L)^{C(d,\gamma)} (2^\ell)^{d/2} L^{-d}
\\&
\stackrel{\eqref{ChoiceEll}}{\leq}
C(d,\gamma,K,N) B^3 \cdot |\log (B^3 |\Lambda^{-1/2}|^3 L^{-2d})|  (\log L)^{C(d,\gamma)} |\Lambda^{-1}| L^{-2d}.
\end{align*}
In the case $\ell=0$, the last estimate (and thus also the overall estimate) also holds true by $|\Lambda^{-1}|\geq |\Lambda|^{-1}\geq c B^{-2} L^d |\log L|^{-d}$ (the second inequality being a consequence of Lemma~\ref{MultilevelVariableStretchedExponentialBound}).

Using again \eqref{ChoiceEpsilon}, we deduce
\begin{align*}
\mathcal{R}_{alllevel}&=
CN^{9/2} |\Lambda^{-1/2}|^2 
|\log \varepsilon|
\sum_{i\in I} \bigg(\sum_{j\in I:m(j)=m(i),\chi_{ij}=1} \big(\bar W_{ij} \bar Z_{ij}
+2 \bar Y_{ij}\, \bar Z_{ij}\big)
+\bar X_i \bar Z_{i}^2\bigg)
\\&
\leq C(d,\gamma,K,N) B^3 |\Lambda^{-1/2}|^2 |\log (B^3 |\Lambda^{-1/2}|^3 L^{-2d})|
\\&~~~~~~~~\times
\sum_{m=0}^{1+\log_2 L}  \Big(\frac{L}{2^m}\Big)^d \cdot (K \log L)^d \cdot (\log L)^{C(d,\gamma)}(2^m)^{d} L^{-3d}
\\&
\leq C(d,\gamma,K,N) B^3 |\Lambda^{-1/2}|^2 |\log (B^3 |\Lambda^{-1/2}|^3 L^{-2d})| (\log L)^{C(d,\gamma)} L^{-2d}.
\end{align*}
As a consequence, we deduce from Theorem~\ref{NormalApproximation} (using again \eqref{ChoiceEpsilon})
\begin{align*}
&\mathcal{D}(X-\mathbb{E}[X],\mathcal{N}_\Lambda)
\\&~~~~~
\leq
C(d,\gamma,N,K) B^3 |\log (B^3 |\Lambda^{-1/2}|^3 L^{-2d})| \cdot (\log L)^{C(d,\gamma)} |\Lambda^{1/2}| |\Lambda^{-1/2}|^3 L^{-2d}.
\end{align*}
Using the bound $|\Lambda^{-1/2}|\geq |\Lambda|^{-1/2}\geq (C(d,\gamma,N,K)B^{2} L^{-d} (\log L)^d)^{-1/2}$ (the last inequality being a consequence of Lemma~\ref{MultilevelVariableStretchedExponentialBound}), we infer the first estimate of our theorem.

{\bf Step 2: Proof of the estimate \eqref{NormalApproximationMultilevelDegenerate}.}
To obtain the second estimate in our theorem which provides a better bound for degenerate covariance matrices $\Var X$, we repeat the preceding proof, however now adding $Q=L^d$ additional independent multivariate Gaussian random variables $G_1,\ldots,G_Q$ with zero expectation $\mathbb{E}[G_q]=0$ and variance $\Var G_q=\frac{1}{Q}(\Lambda-\Var X)$. This yields by an argument analogous to the above one (exploiting that the new additional random variables are independent from all others and using the fact that $\Var G_q = \frac{1}{Q} (\Lambda-\Var X) \leq L^{-2d}$)
\begin{align}
\nonumber
&\mathcal{D}(X+G-\mathbb{E}[X],\mathcal{N}_\Lambda)
\\&
\label{EstimateOnSmoothedX}
\leq
C(d,\gamma,N,K) B^3 (\log L)^{C(d,\gamma)} \big(L^{-d} |\Lambda^{1/2}| |\Lambda^{-1/2}|^3\big) L^{-d},
\end{align}
where $G:=\sum_{q=1}^Q G_q$. Note that $G$ has Gaussian moments with
\begin{align}
\label{GGaussianMoments}
||G||_{\exp^2}\leq C(N) \sqrt{|\Lambda-\Var X|}.
\end{align}
It now only remains to eliminate $G$ from the estimate \eqref{EstimateOnSmoothedX}.
Let $\phi\in \Phi_\Lambda$. Fixing $\bar G\geq |\Lambda-\Var X|^{1/2}$, we may rewrite for $\phi_\varepsilon$ as defined in \eqref{DefinitionPhiVarepsilon}
\begin{align*}
&\mathbb{E}[\phi_\varepsilon(X-\mathbb{E}[X])]-\int_{\mathbb{R}^N} \phi_\varepsilon(z) \mathcal{N}_\Lambda(z) \,dz
\\&
\leq \mathbb{E}[\phi_\varepsilon(X+G-\mathbb{E}[X])]-\int_{\mathbb{R}^N} \phi_\varepsilon(z) \mathcal{N}_\Lambda(z) \,dz
\\&~~~
+\mathbb{E}[|\phi_\varepsilon(X+G-\mathbb{E}[X])-\phi_\varepsilon(X-\mathbb{E}[X])|]
\\&
\leq \mathcal{D}^{\tilde L}(X+G-\mathbb{E}[X],\mathcal{N}_\Lambda)
+\mathbb{E}[\osc_{\bar G} \phi_\varepsilon(X+G-\mathbb{E}[X])]
\\&~~~
+\sup_z |\nabla \phi_\varepsilon(z)| ~ \mathbb{E}[|G|\chi_{|G|\geq \bar G}].
\end{align*}
An application of Lemma~\ref{PropertiesOfFunctionClass} c) and d) to the function
\begin{align*}
\hat h(x):=(\mathcal{N}_{\bar G^2 \Id} \ast \osc_{(2\sqrt{N}+1)\bar G} \phi_\varepsilon) (x)
\end{align*}
yields $\frac{1}{2\sqrt{N}+2}\cdot \frac{1}{40N}\hat h\in \Phi_\Lambda^{\tilde L}$ for $\tilde L$ large enough as well as $\osc_{\bar G} \phi_\varepsilon(x) \leq 2\hat h(x)$ and thus
\begin{align*}
&\mathbb{E}[\phi_\varepsilon(X-\mathbb{E}[X])]-\int_{\mathbb{R}^N} \phi_\varepsilon(z) \mathcal{N}_\Lambda(z) \,dz
\\&
\leq \mathcal{D}^{\tilde L}(X+G-\mathbb{E}[X],\mathcal{N}_\Lambda)
+\mathbb{E}[2\hat h(X+G-\mathbb{E}[X])]
+\sup_z |\nabla \phi_\varepsilon(z)| ~ \mathbb{E}[|G|\chi_{|G|\geq \bar G}]
\\&
\leq C(N) \mathcal{D}^{\tilde L}(X+G-\mathbb{E}[X],\mathcal{N}_\Lambda)
+\int_{\mathbb{R}^N} 2\hat h(z) \mathcal{N}_\Lambda(z) \,dz
+\sup_z |\nabla \phi_\varepsilon(z)| ~ \mathbb{E}[|G|\chi_{|G|\geq \bar G}]
\\&
\leq C(N) \mathcal{D}^{\tilde L}(X+G-\mathbb{E}[X],\mathcal{N}_\Lambda)
+C(N) \bar G
+\sup_z |\nabla \phi_\varepsilon(z)| ~ \mathbb{E}[|G|\chi_{|G|\geq \bar G}],
\end{align*}
where in the last step we have used the bound
\begin{align*}
&\int_{\mathbb{R}^N} \hat h(z) \mathcal{N}_\Lambda(z) \,dz
\\&
\leq \int_{\mathbb{R}^N} \int_{\mathbb{R}^N} \int_{\mathbb{R}^N} \mathcal{N}_{\bar G^2 \Id}(z-x) \mathcal{N}_{\varepsilon^2 \Lambda}(y) \osc_{\sqrt{1-\varepsilon^2}(2\sqrt{N}+1)\bar G} \phi(\sqrt{1-\varepsilon^2} x-y) \mathcal{N}_\Lambda(z) \,dx \,dy \,dz
\\&
\stackrel{\eqref{DefinitionClassPhi}}{\leq} (2\sqrt{N}+1)\bar G.
\end{align*}
Choosing $\bar G:=S |\log L| |(\Lambda-\Var X)^{1/2}|$ (with some constant $S$ to be chosen below) and $\varepsilon:=L^{-d}$ and using the estimate
\begin{align*}
|\nabla \phi_\varepsilon(z)|
&\leq \int_{\mathbb{R}^N} |\nabla \phi|(\sqrt{1-\varepsilon^2}z-y)
\mathcal{N}_{\varepsilon^2 \Lambda}(y) \,dy
\\&
\leq \varepsilon^{-N} \int_{\mathbb{R}^N} |\nabla \phi|(\sqrt{1-\varepsilon^2}z-y)
\mathcal{N}_{\Lambda}(y) \,dy
\stackrel{\eqref{DefinitionClassPhi}}{\leq} \varepsilon^{-N} \leq L^{C(d,N)}
\end{align*}
as well as Lemma~\ref{CalculusStretchedExponential}b and \eqref{GGaussianMoments},
we deduce for all $\phi\in \Phi_\Lambda$ that
\begin{align*}
&\mathbb{E}[\phi_\varepsilon(X-\mathbb{E}[X])]-\int_{\mathbb{R}^N} \phi_\varepsilon(z) \mathcal{N}_\Lambda(z) \,dz
\\&
\leq C(N) \mathcal{D}^{\tilde L}(X+G-\mathbb{E}[X],\mathcal{N}_\Lambda)
+C(N) S |\log L| |(\Lambda-\Var X)^{1/2}|
\\&~~~
+C(N) L^{C(d,N)} \cdot |\Lambda^{1/2}| \cdot \exp(-c S \log L).
\end{align*}
Lemma~\ref{MultilevelVariableStretchedExponentialBound} and our assumption $|\Lambda-\Var X|\leq L^{-d}$ imply the upper bound $|\Lambda|\leq C(d,\gamma,N,K)B^2 L^{-d} |\log L|^d$.
As a consequence, choosing the constant $S$ large enough and using the notation from Lemma~\ref{SmoothingEstimateLemma} we obtain
\begin{align*}
\mathcal{D}_\varepsilon^{\tilde L}(X-\mathbb{E}[X],\mathcal{N}_\Lambda)
&\leq
C(N) \mathcal{D}^{\tilde L}(X+G-\mathbb{E}[X],\mathcal{N}_\Lambda)
+C(d,N) |\log L| |(\Lambda-\Var X)^{1/2}|
\\&~~~~
+C(d,\gamma,N,K) B |\log L|^C L^{-d}.
\end{align*}
Using Lemma~\ref{SmoothingEstimateLemma}, we conclude that
\begin{align*}
&\mathcal{D}^{\tilde L}(X-\mathbb{E}[X],\mathcal{N}_\Lambda)
\\
&\leq
C(N) |\Lambda^{1/2}| \varepsilon
+C(N) \mathcal{D}^{\tilde L}_\varepsilon(X-\mathbb{E}[X],\mathcal{N}_\Lambda)
\\&
\leq
C(d,\gamma,N,K) B (\log L)^{d/2} L^{-d/2} \cdot L^{-d}
+C(N) \mathcal{D}^{\tilde L}(X+G-\mathbb{E}[X],\mathcal{N}_\Lambda)
\\&~~~~
+C(d,N) |\log L| |(\Lambda-\Var X)^{1/2}|
+C(d,\gamma,N,K) B |\log L|^C L^{-d}
\\&
\stackrel{\eqref{EstimateOnSmoothedX}}{\leq}
C(d,\gamma,N,K) |\log L|^{C(d,\gamma)} (B+B^3 L^{-d}|\Lambda^{1/2}||\Lambda^{-1/2}|^3)L^{-d}
\\&~~~~
+C(d,N) |\log L| |(\Lambda-\Var X)^{1/2}|.
\end{align*}
This yields the estimate \eqref{NormalApproximationMultilevelDegenerate} upon noticing that $B\leq C B^3 |\log L|^C L^{-d}|\Lambda^{1/2}||\Lambda^{-1/2}|^3$ (by Lemma~\ref{MultilevelVariableStretchedExponentialBound} and our assumption $|\Lambda-\Var X|\leq L^{-d}$).
\end{proof}

We next prove our normal approximation result for integral functionals of random fields which may be approximated well by random fields with finite dependency range. The proof is a simple reduction to the statement of Theorem~\ref{TheoremNormalApproximationMultilevelLocalDependence}.
\begin{proof}[Proof of Theorem~\ref{TheoremRandomField}]
{\bf Step 1: Reduction to random fields supported on $[0,L]^d$.}
We will show that we may restrict ourselves to random fields $v$ and $v_r$ that vanish identically outside of a ball of the form $B_{L/2}$, from which point on the reduction to the case of random field supported on the cube $[0,L]^d$ is straightforward. To this aim, let $\tau>0$ and $l\geq K$ and introduce the change of variables
\begin{align*}
\tilde x:= \varphi(x) := L \tau\int_0^{L^{-1} |x|} \frac{1+s^{2l}}{1+s^{2+2l}} \,ds ~ \frac{x}{|x|}.
\end{align*}
Note that this change of variables maps $\mathbb{R}^d$ to $B_{c_l L\tau}$ for some constant $c_l$ depending only on $l$. Furthermore, observe that we have
\begin{align*}
\det \nabla \varphi(x)= \tau^d \frac{1+L^{-2l}|x|^{2l}}{1+L^{-2-2l}|x|^{2+2l}} |\varphi(x)|^{d-1} |x|^{-(d-1)}.
\end{align*}

We define $\tilde a(\tilde x):=a(x)$, $\tilde v(\tilde x):=v(x)$, and $\tilde v_r(\tilde x):=v_r(x)$ for any $\tilde x =\varphi(x) \in B_{c_l L\tau}$, and extend those random fields by zero outside of the ball $B_{c_l L\tau}$.
Similarly, we define
\begin{align*}
\tilde \xi(\tilde x):=\det \nabla (\varphi^{-1})(\tilde x) \xi(x)=\frac{\xi(x)}{\det \nabla \phi(x)}
\end{align*}
and extend $\tilde \xi(\tilde x)$ to a $W^{K,\infty}$ function supported in $B_{c_l L\tau}$. Note that by our assumptions on $\xi$, such an extension exists and we may derive an estimate of the form
\begin{align*}
|\nabla^k \tilde \xi| \leq C L^{-k}
\end{align*}
for all $0\leq k\leq K$.

As the map $\varphi$ is Lipschitz (with a Lipschitz constant independent of $L$), for $\tau>0$ small enough (depending only on the spatial dimension $d$ and $l$) it preserves the finite range of dependence property of the random field $a$. Furthermore, it maps $\mathbb{R}^d$ to the ball $B_{c_l L\tau}$ and we have
\begin{align*}
F=\int_{\mathbb{R}^d} v \xi \,dx = \int_{\mathbb{R}^d} \tilde v \tilde \xi \,d\tilde x.
\end{align*}
To establish the normal approximation result for $\int_{\mathbb{R}^d} v \xi \,dx$, it therefore suffices to establish the corresponding result for $\tilde v$ and $\tilde \xi$. We have already verified that $\tilde \xi$ satisfies the assumptions of the theorem (up to a constant factor). Thus, it only remains to establish the properties of the random fields $\tilde v_r$.

For $\tau>0$ small enough (depending only on $d$ and $l$), the map $\varphi$ is $1$-Lipschitz; hence, the random field $\tilde v_r$ inherits the $r$-local dependence on $\tilde a$ from the $r$-local dependence of the random field $v_r$ on $a$. Given any $\tilde \psi\in W^{K,\infty}$, we also have the estimate
\begin{align*}
\bigg|\int_{\mathbb{R}^d} (\tilde v-\tilde v_r) \tilde \psi \,d\tilde x\bigg|
&= \bigg|\int_{\mathbb{R}^d} (v-v_r)(x) \tilde \psi(\varphi(x)) \det \nabla \varphi(x) \,dx\bigg|
\\&
\leq \mathcal{C} r^{-d} \int_{\mathbb{R}^d} \sup_{y\in B_r(x)} \sum_{k=0}^K r^{k} |\nabla^k (\tilde \psi(\varphi(\cdot)) \det \nabla \varphi(\cdot))|(y) \,dx
\\&
\leq C \mathcal{C} r^{-d} \int_{\mathbb{R}^d} \sup_{y\in B_r(x)} \sum_{k=0}^K r^{k} |(\nabla^k \tilde \psi)(\varphi(y)) \det \nabla \varphi(y)| \,dx
\end{align*}
where in the last step we have used $r\leq L$. Using the $1$-Lipschitz property of $\varphi$, this entails the desired estimate
\begin{align*}
\bigg|\int_{\mathbb{R}^d} (\tilde v-\tilde v_r) \tilde \psi \,d\tilde x\bigg|
\leq C \mathcal{C} r^{-d} \int_{\mathbb{R}^d} \sup_{\tilde y\in B_r(\tilde x)} \sum_{k=0}^K r^{k} |(\nabla^k \tilde \psi)(\tilde y)| \,d\tilde x.
\end{align*}
This finishes the reduction to the case of random fields $v$ and $v_r$ supported on the cube $[0,L]^d$.


{\bf Step 2: Proof for random field supported in $[0,L]^d$.}
In order to establish our result for random field $v$ and $v_r$ which are supported on the cube $[0,L]^d$, we shall reduce it to the normal approximation result of Theorem~\ref{TheoremNormalApproximationMultilevelLocalDependence}. 
For each $0\leq m\leq \log_2 L+1$, introduce a partition of unity $\eta_y$, $y\in 2^m \mathbb{Z}^d\cap [0,L)^d$, with $\supp \eta_y\subset y+[0,2^m]$ and $|\nabla^l \eta_y|\leq C (2^m)^{-k}$ for $0\leq k\leq K$. We may then rewrite
\begin{align*}
X&=L^{-d} \int_{\mathbb{R}^d} v_1 \xi \,dx+\sum_{m=0}^{\log_2 L-1} L^{-d} \int_{\mathbb{R}^d} (v_{2^{m+1}}-v_{2^m}) \xi \,dx
\\&~~~
+L^{-d} \int_{\mathbb{R}^d} (v-v_{2^{\lfloor\log_2 L\rfloor}}) \xi \,dx
\\&
=\sum_{y\in \mathbb{Z}^d} \underbrace{L^{-d} \int_{\mathbb{R}^d} v_1 \xi \eta_y^0 \,dx}_{=:X_y^0}
+\sum_{m=0}^{\log_2 L-1} \sum_{y\in 2^{m+1} \mathbb{Z}^d} \underbrace{L^{-d} \int_{\mathbb{R}^d}  (v_{2^{m+1}}-v_{2^m}) \xi \eta_y^{m+1} \,dx}_{=:X_y^{m+1}}
\\&~~~
+\underbrace{L^{-d} \int_{\mathbb{R}^d} (v-v_{2^{\lfloor\log_2 L\rfloor}}) \xi \,dx}_{=:X_0^{\log L+1}}.
\end{align*}
The $X_y^m$ give rise to a multilevel local dependence structure in the sense of Definition~\ref{ConditionRandomVariable}. In particular, we easily check that by the bound on $v$ and the assumption \eqref{AssumptionApproximationByFiniteRange} we have $||X_y^m||_{\exp^\gamma}\leq CL^{-d}$ for all $m$ and all $y\in 2^m \mathbb{Z}^d\cap [0,L)^d$ (as we have $|\nabla^l (\xi \eta_y^m)|\leq C (2^m)^{-l}$). Furthermore, the localized dependence of the $X_y^m$ on $a$ required by Definition~\ref{ConditionRandomVariable} follows directly from our definition of $X_y^m$ and our assumption on $v_r$ (if one chooses $K$ in Definition~\ref{ConditionRandomVariable} large enough). The statement of our theorem is then a direct consequence of the normal approximation result from Theorem~\ref{TheoremNormalApproximationMultilevelLocalDependence}.
\end{proof}

\section{Proof of the Result on Moderate Deviations}

We now provide the proof of our moderate deviations result for sums of random variables with multilevel local dependence structure.
\begin{proof}[Proof of Theorem~\ref{TheoremModerateDeviations}]
To simplify notation, we only consider the case $L=2^{\hat k}$ for some $\hat k\in \mathbb{N}$; the proof for the general case is similar. Again, without loss of generality we may assume $\mathbb{E}[X_y^k]=0$ for all $k$ and all $y$.
\\
{\bf Step 1} (Decomposition of $X$).
\\
We first decompose $X$ into groups of terms $\mathcal{G}_i$ and ``remainder terms'' $\mathcal{R}$. The groups $\mathcal{G}_i$ are stochastically independent from each other; each group heuristically consists of the random variables summed over a cube of diameter $\ell-2\cdot 2^{p(m)}$, where $1\ll \ell\ll L$ is an intermediate length scale that we are going to choose and where $2^{p(m)}\ll \ell$ is (half of) the size of the gaps between the groups (which we introduce in order to achieve independence of the groups). The fact that the groups $\mathcal{G}_i$ sum up all random variables over an intermediate length scale $\ell$ allows us to apply normal approximation to the groups $\mathcal{G}_i$.
The rest $\mathcal{R}$ basically corresponds to the random variables ``between the groups'' (to achieve independence of the groups) and the random variables with long-range dependencies. We shall prove that these terms are small in a suitable sense.

More precisely, we introduce an intermediate scale $\ell=2^{\tilde k}$ (with $\tilde k\in \mathbb{N}$ to be chosen below) and define $m_0:=\lfloor \log_2 \frac{\ell}{4K \log L}\rfloor$. This enables us to rewrite the random variable $X$ as
\begin{align*}
X&=
\sum_{m=0}^{1+\log_2 L} \sum_{i\in 2^m \mathbb{Z}^d \cap [0,L)^d} X_i^m
\\&=
\sum_{m=0}^{m_0} \sum_{i\in \ell \mathbb{Z}^d \cap [0,L)^d} \sum_{j\in 2^m \mathbb{Z}^d \cap [0,\ell)^d} X_{i+j}^m
+\sum_{m=m_0+1}^{1+\log_2 L} \underbrace{\sum_{i\in 2^m \mathbb{Z}^d \cap [0,L)^d} X_{i}^m}_{=:\mathcal{R}^{m}}
\end{align*}
which gives (defining $p(m)$ to be the smallest integer with $2^{p(m)}\geq 2^{m+2} K \log L$)
\begin{align*}
X&
=\sum_{m=0}^{m_0} \sum_{i\in \ell \mathbb{Z}^d \cap [0,L)^d} \sum_{j\in 2^m \mathbb{Z}^d \cap [2^{p(m)},\ell-2^{p(m)})^d} X_{i+j}^m
\\&~~~
+\sum_{m=0}^{m_0} \underbrace{\sum_{i\in \ell \mathbb{Z}^d \cap [0,L)^d} \sum_{j\in 2^m \mathbb{Z}^d \cap [0,\ell)^d \setminus [2^{p(m)},\ell-2^{p(m)})^d} X_{i+j}^m}_{=:\mathcal{R}^{m}}
\\&~~~
+\sum_{m=m_0+1}^{1+\log_2 L} \underbrace{\sum_{i\in 2^m \mathbb{Z}^d \cap [0,L)^d} X_i^m}_{=:\mathcal{R}^{m}}.
\end{align*}
Exchanging the order of summation in the first term and defining
\begin{align}
\label{DefinitionMainGroups}
\mathcal{G}_i:=\sum_{m=0}^{m_0}  \sum_{j\in 2^m \mathbb{Z}^d \cap [2^{p(m)},\ell-2^{p(m)})^d} X_{i+j}^m,
\end{align}
we obtain
\begin{align*}
X&
=\sum_{i\in \ell \mathbb{Z}^d \cap [0,L)^d} \mathcal{G}_i
+\sum_{m=0}^{m_0} \mathcal{R}^{m}
+\sum_{m=m_0+1}^{1+\log_2 L} \mathcal{R}^{m}.
\end{align*}
{\bf Step 2} (Estimate on the terms $\mathcal{R}^m$ for $m>m_0$).\\
The remaining terms on level $m$ for $m_0+1\leq m\leq 1+\log_2 L$
\begin{align*}
\mathcal{R}^{m}:=\sum_{i\in 2^m \mathbb{Z}^d \cap [0,L)^d} X_i^m
\end{align*}
(observe that the sum consists of $(\frac{L}{2^m})^d$ terms) may be grouped into $\sim (K \log L)^d$ groups, each only consisting of $\sim \frac{(\frac{L}{2^m})^d}{(K \log L)^d}$ independent random variables: Choosing $p(m)$ as before to be the smallest integer with $2^{p(m)}\geq 2^{m+2} K \log L$ (but choosing $p(m)=\log_2 L$ if this integer were larger than $\log_2 L$), we have
\begin{align*}
\mathcal{R}^{m}=\sum_{j\in 2^m \mathbb{Z}^d \cap [0,2^{p(m)})^d} \underbrace{\sum_{i\in 2^{p(m)} \mathbb{Z}^d \cap [0,L)^d} X_{i+j}^m}_{=:\mathcal{R}^{m,j}}.
\end{align*}
By Definition~\ref{ConditionRandomVariable}, the random variables in each $\mathcal{R}^{m,j}$ are now independent and we deduce from Lemma~\ref{ConcentrationStretchedExponential} (with $\tilde \gamma:=\gamma/(\gamma+1)$)
\begin{align*}
\big|\big|\mathcal{R}^{m,j}\big|\big|_{\exp^{\tilde \gamma}}
\leq C(\gamma) \sqrt{\Big(\frac{L}{2^{p(m)}}\Big)^d} \max_i ||X_i^m||_{\exp^\gamma}
\end{align*}
and as a consequence (using the bounds from Definition~\ref{ConditionRandomVariable})
\begin{align*}
&\bigg|\bigg|\sum_{m=m_0+1}^{1+\log_2 L}\mathcal{R}^{m}\bigg|\bigg|_{\exp^{\tilde \gamma}}
=\bigg|\bigg|\sum_{m=m_0+1}^{1+\log_2 L} \sum_{j\in 2^m \mathbb{Z}^d \cap [0,2^{p(m)})^d} \mathcal{R}^{m,j}\bigg|\bigg|_{\exp^{\tilde \gamma}}
\\
&\leq \sum_{m=m_0+1}^{1+\log_2 L}
\sum_{j\in 2^m \mathbb{Z}^d \cap [0,2^{p(m)})^d} C(\gamma) \sqrt{\Big(\frac{L}{2^{p(m)}}\Big)^d} \max_i ||X_i^m||_{\exp^\gamma}
\\&
\leq
\sum_{m=m_0+1}^{1+\log_2 L}
\Big(\frac{2^{p(m)}}{2^m}\Big)^d C(\gamma) \sqrt{\Big(\frac{L}{2^{p(m)}}\Big)^d} \max_i ||X_i^m||_{\exp^\gamma}
\\&
\leq
\sum_{m=m_0+1}^{1+\log_2 L}
 C(d) (K \log L)^d C(\gamma) \sqrt{\Big(\frac{L}{2^m K \log L}\Big)^d} B L^{-d}
\\&
\leq
 C(d) C(\gamma) (K \log L)^{d/2} (2^{m_0})^{-d/2} B L^{-d/2}
\end{align*}
which yields by the choice of $m_0$
\begin{align}
\label{EstimateRmBig}
\bigg|\bigg|\sum_{m=m_0+1}^{\log_2 L}\mathcal{R}^{m}\bigg|\bigg|_{\exp^{\tilde \gamma}}
\leq
 C(d,\gamma) B (K \log L)^d \ell^{-d/2} L^{-d/2}.
\end{align}
{\bf Step 3} (Estimate on the terms $\mathcal{R}^m$ for $m\leq m_0$).\\
The remaining terms on level $m$ for $0\leq m\leq m_0$
\begin{align*}
\mathcal{R}^{m}=\sum_{i\in \ell \mathbb{Z}^d \cap [0,L)^d} \sum_{j\in 2^m \mathbb{Z}^d \cap [0,\ell)^d \setminus [2^{p(m)},\ell-2^{p(m)})^d} X_{i+j}^m
\end{align*}
may be grouped as follows into a sum of sums of independent random variables: We have
\begin{align*}
\mathcal{R}^{m}
&=\sum_{i\in \ell \mathbb{Z}^d \cap [0,L)^d} \sum_{j\in 2^{p(m)}\mathbb{Z}^d\cap [0,\ell)^d \setminus [2^{p(m)},\ell-2^{p(m)})^d} \sum_{k\in 2^m \mathbb{Z}^d \cap [0,2^{p(m)})^d} X_{i+j+k}^m
\\&
= \sum_{k\in 2^m \mathbb{Z}^d \cap [0,2^{p(m)})^d} \underbrace{\sum_{i\in \ell \mathbb{Z}^d \cap [0,L)^d} \sum_{j\in 2^{p(m)}\mathbb{Z}^d\cap [0,\ell)^d \setminus [2^{p(m)},\ell-2^{p(m)})^d} X_{i+j+k}^m}_{=:\mathcal{R}^{m,k}}.
\end{align*}
Note that $\mathcal{R}^{m,k}$ is a sum of at most $(\frac{L}{\ell})^d \cdot \frac{C(d)\ell^{d-1}2^{p(m)}}{(2^{p(m)})^d}$ independent random variables. An application of Lemma~\ref{ConcentrationStretchedExponential} yields
\begin{align*}
\big|\big|\mathcal{R}^{m,k}\big|\big|_{\exp^{\tilde \gamma}}
\leq C(\gamma) \sqrt{\Big(\frac{L}{\ell}\Big)^d \cdot \frac{C(d)\ell^{d-1}2^{p(m)}}{(2^{p(m)})^d}} \max_i ||X_i^m||_{\exp^\gamma}
\end{align*}
which entails
\begin{align*}
&\bigg|\bigg|\sum_{m=0}^{m_0}\mathcal{R}^{m}\bigg|\bigg|_{\exp^{\tilde \gamma}}
=\bigg|\bigg|\sum_{m=0}^{m_0}\sum_{k\in 2^m \mathbb{Z}^d \cap [0,2^{p(m)})^d} \mathcal{R}^{m,k}\bigg|\bigg|_{\exp^{\tilde \gamma}}
\\&
\leq
\sum_{m=0}^{m_0} \sum_{k\in 2^m \mathbb{Z}^d \cap [0,2^{p(m)})^d} C(\gamma) \sqrt{\Big(\frac{L}{\ell}\Big)^d \cdot \frac{C(d)\ell^{d-1}2^{p(m)}}{(2^{p(m)})^d}} \max_i ||X_i^m||_{\exp^\gamma}
\\&
\leq
\sum_{m=0}^{m_0} \Big(\frac{2^{p(m)}}{2^m}\Big)^d C(\gamma) \sqrt{\Big(\frac{L}{\ell}\Big)^d \cdot \frac{C(d)\ell^{d-1}2^{p(m)}}{(2^{p(m)})^d}} \max_i ||X_i^m||_{\exp^\gamma}
\\&
\leq
\sum_{m=0}^{m_0} C(d) (K \log L)^d C(\gamma) \sqrt{\Big(\frac{L}{\ell}\Big)^d \cdot \frac{C(d)\ell^{d-1}}{(2^m)^{d-1}(K \log L)^{d-1}}} B L^{-d}
\\&
\leq
\sum_{m=0}^{m_0} C(d) C(\gamma) (K \log L)^{(d+1)/2} \ell^{-1/2} (2^m)^{-(d-1)/2} B L^{-d/2}
\end{align*}
and as a consequence
\begin{align}
\label{EstimateRmSmall}
\bigg|\bigg|\sum_{m=0}^{m_0}\mathcal{R}^{m}\bigg|\bigg|_{\exp^{\tilde \gamma}}
\leq C(d,\gamma) B (K \log L)^{(d+3)/2} \ell^{-1/2} L^{-d/2}.
\end{align}
{\bf Step 4 (Estimate on the ``bulk contributions'' $\mathcal{G}_i$)}.
\\
The formula \eqref{DefinitionMainGroups} may be rewritten as (recalling that $p(m)$ is the smallest integer with $2^{p(m)}\geq 2^{m+2} K \log L$)
\begin{align*}
\mathcal{G}_i=\sum_{m=0}^{m_0} \sum_{k\in 2^m \mathbb{Z}^d \cap [0,2^{p(m)})^d}
\sum_{j\in 2^{p(m)}\mathbb{Z}^d\cap [2^{p(m)},\ell-2^{p(m)})^d} X_{i+j+k}^m,
\end{align*}
which yields upon applying Lemma~\ref{ConcentrationStretchedExponential} to the inner sum
\begin{align}
\nonumber
||\mathcal{G}_i||_{\exp^{\tilde \gamma}}
&\leq \sum_{m=0}^{m_0} \sum_{k\in 2^m \mathbb{Z}^d \cap [0,2^{p(m)})^d} C(d,\gamma)
\bigg(\frac{\ell}{2^{p(m)}}\bigg)^{d/2} B L^{-d}
\\&
\nonumber
\leq \sum_{m=0}^{m_0} C(d,\gamma) B
\bigg(\frac{\ell}{2^m K\log L}\bigg)^{d/2} (K \log L)^d L^{-d}
\\&
\label{EstimateGroupTails}
\leq C(d,\gamma) B (K \log L)^{d/2} \ell^{d/2} L^{-d}.
\end{align}

Next, to each of the groups
\begin{align*}
\mathcal{G}_i:=\sum_{m=0}^{m_0}  \sum_{j\in 2^m \mathbb{Z}^d \cap [2^{p(m)},\ell-2^{p(m)})^d} X_{i+j}^m,
\end{align*}
we apply Theorem~\ref{TheoremNormalApproximationMultilevelLocalDependence} with $L$ replaced by $\ell$; note that we may rescale our variables $X_{i+j}^m$ in the group $\mathcal{G}_i$ by a factor of $(\frac{L}{\ell})^d$, as we have the bound $||X_{i+j}^m||_{\exp^\gamma}\leq B L^{-d}$ while to apply Theorem~\ref{TheoremNormalApproximationMultilevelLocalDependence} to the group $\mathcal{G}_i$ we only need the estimate $||X_{i+j}^m||_{\exp^\gamma}\leq B \ell^{-d}$. We then obtain for any positive matrix $\Lambda_i\geq \Var \mathcal{G}_i$ with $|\Lambda_i-\Var \mathcal{G}_i|\leq \ell^{d} L^{-2d}$
\begin{align*}
&\mathcal{D}\Big[\Big(\frac{L}{\ell}\Big)^d \mathcal{G}_i,\mathcal{N}_{(L/\ell)^{2d}\Lambda_i}\Big]
\\&~~~~~~
\leq
C(d,\gamma,N,K) B^3 (\log \ell)^{C(d,\gamma)} \ell^{-d} \Big(\frac{L}{\ell}\Big)^d |\Lambda_i^{1/2}| \Big(\frac{L}{\ell}\Big)^{-3d} |\Lambda_i^{-1/2}|^3 \ell^{-d}
\\&~~~~~~~~~~
+C(d,N) (\log \ell)^{C(d,\gamma)} \Big(\frac{L}{\ell}\Big)^d |\Lambda_i-\Var \mathcal{G}_i|^{1/2}.
\end{align*}
Rescaling and using the fact that the $1$-Wasserstein distance $\mathcal{W}_1$ is bounded by our distance $\mathcal{D}$, we deduce
\begin{align*}
\mathcal{W}_1[\mathcal{G}_i,\mathcal{N}_{\Lambda_i}] \leq&
C(d,\gamma,N,K) B^3 (\log L)^{C(d,\gamma)} |\Lambda_i^{1/2}| |\Lambda_i^{-1/2}|^3 \ell^{d} L^{-3d}
\\&~~~
\nonumber
+C(d,N) (\log L)^{C(d,\gamma)} |\Lambda_i-\Var \mathcal{G}_i|^{1/2}.
\end{align*}
We choose $\ell$ with $L^{1/2}\leq \ell \leq 2L^{1/2}$ and set
\begin{align}
\label{ChoiceLambdai}
\Lambda_i:=\Var \mathcal{G}_i+\ell^{d-1/4} L^{-2d} \Id_{N\times N}.
\end{align}
Note that this choice entails
\begin{align*}
|\Lambda_i^{-1/2}| \leq \ell^{-d/2+1/8} L^{d}.
\end{align*}
As a consequence of these estimates, the choice of $\ell$, and the estimate on $|\Lambda_i^{1/2}|$ deduced from \eqref{EstimateGroupTails}, we obtain
\begin{align}
\label{GroupNormalApproximation}
\mathcal{W}_1[\mathcal{G}_i,\mathcal{N}_{\Lambda_i}] \leq&
C(d,\gamma,N,K) B^4 (\log L)^{C(d,\gamma)} \ell^{d/2-1/8} L^{-d}.
\end{align}

We are now in position to apply Lemma~\ref{ConcentrationCloseToGaussian} to the sum $\sum_{i\in \ell\mathbb{Z}^d\cap [0,L)^d} \mathcal{G}_i$ (recall that the $\mathcal{G}_i$ form a collection of independent random variables). Note that in our setting we have $M=(L/\ell)^d$. By our estimates \eqref{GroupNormalApproximation} and \eqref{EstimateGroupTails}, in our application of Lemma~\ref{ConcentrationCloseToGaussian} we may choose $b:=C(d,\gamma,K) B^4 (\log L)^{d/2} \ell^{d/2} L^{-d}$ and any $\tau\leq \frac{1}{2}$ with
\begin{align*}
\tau\geq C(d,\gamma,N,K) (\log L)^{C(d,\gamma)} \ell^{-1/8}.
\end{align*}
With this choice, Lemma~\ref{ConcentrationCloseToGaussian} yields
\begin{align}
\label{SumOfGroups}
\sum_{i\in \ell \mathbb{Z}^d \cap [0,L)^d} \mathcal{G}_i \stackrel{d}{=} Y + Z
\end{align}
where $Y$ is a multivariate Gaussian random variable with covariance matrix
\begin{align}
\label{TildeLambda}
\tilde \Lambda:=\sum_{i\in \ell \mathbb{Z}^d \cap [0,L)^d} \Lambda_i
\end{align}
and $Z$ satisfies the estimate
\begin{align*}
\mathbb{P}[|Z|\geq r]
\leq
3N \exp\bigg(-\frac{r^2}{C(d,\gamma,K,N) B^8 \tau |\log \tau|^{1/\tilde \gamma} |\log L|^{C(d,\gamma)}L^{-d}} \bigg)
\end{align*}
for any
\begin{align*}
r\leq &\sqrt{\tau |\log \tau|^{1/\tilde \gamma}} c L^{-d/2}
\\&~~~~~~~~
\times
\min\bigg\{\frac{\sqrt{(L/\ell)^d \tau |\log \tau|^{1/\gamma}}}{(2\log (2(L/\ell)^d))^{1/\tilde \gamma}},\big(\tau |\log \tau|^{1/\tilde \gamma} (L/\ell)^d\big)^{\tilde \gamma/(4+2\tilde \gamma)}
\bigg\}.
\end{align*}
A crude estimate on $|\log \tau|$ of the form $|\log \tau| \leq C(d,\gamma,N,K) (\log L)^{C(d,\gamma)}$
yields using also $(L/\ell)^d \tau\geq c$ (which holds by the lower bound on $\tau$ and the choice $\ell\sim L^{1/2}$)
\begin{align*}
\mathbb{P}[|Z|\geq r]
\leq
3N \exp\bigg(-\frac{r^2}{\tau \cdot C(d,\gamma,K,N) B^8 (\log L)^{C(d,\gamma)} L^{-d}} \bigg)
\end{align*}
for any
\begin{align*}
r\leq L^{-d/2} \cdot \Big(\frac{L}{\ell}\Big)^{d \tilde \gamma/(4+2\tilde \gamma)} \cdot c(d,\gamma,N,K) (\log L)^{-C(d,\gamma)} \tau^{1/2+\tilde \gamma/(4+2\tilde \gamma)}.
\end{align*}
We now set $\tau:=c(d,\gamma,N,K) L^{-\min\{d\tilde\gamma/2(4+2\tilde \gamma),1/32\}}$. The previous estimate then yields
\begin{align*}
\mathbb{P}[|Z|\geq r]
\leq
3N \exp\bigg(-\frac{r^2}{L^{-\beta_1} \cdot C(d,\gamma,K,N) B^8 (\log L)^{C(d,\gamma)} L^{-d}} \bigg)
\end{align*}
for some $\beta_1>0$ as long as
\begin{align*}
r\leq L^{-d/2} \cdot c(d,\gamma,N,K) (\log L)^{-C(d,\gamma)}.
\end{align*}
Choosing $r=\frac{1}{2}L^{-d/2} L^{-\beta_1/4}$ and using $L\geq C$ (note that upon changing the constants, our theorem is trivially true for $L\leq C$), we get
\begin{align}
\label{EstimateZ}
\mathbb{P}\bigg[|Z|\geq \frac{1}{2}L^{-d/2} L^{-\beta_1/4} \bigg]
\leq
C \exp\bigg(-\frac{L^{\beta_1/2}}{C(d,\gamma,K,N)  B^8 (\log L)^{C(d,\gamma)} } \bigg).
\end{align}
{\bf Step 5 (Conclusion)}.
\\
By \eqref{SumOfGroups}, we know that the law of the sum of the main groups
\begin{align*}
\sum_{i\in \ell \mathbb{Z}^d \cap [0,L)^d} \mathcal{G}_i
\end{align*}
is equal to the law of $Y+Z$, where $Y$ is a Gaussian random variable with covariance matrix $\tilde \Lambda=\sum_{i\in \ell \mathbb{Z}^d\cap [0,L)^d} \tilde \Lambda_i$ and where $Z$ satisfies the smallness estimate \eqref{EstimateZ}. By \eqref{TildeLambda}, \eqref{ChoiceLambdai}, and \eqref{EstimateRmSmall} as well as \eqref{EstimateRmBig} and \eqref{EstimateGroupTails} and the choice $\ell\sim L^{1/2}$, we see that
\begin{align*}
|\tilde \Lambda-\Var X|
&
\leq \bigg|\tilde \Lambda- 
\sum_{i\in \ell \mathbb{Z}^d \cap [0,L)^d} \Var \mathcal{G}_i\bigg|
+\bigg|\Var X-\Var
\sum_{i\in \ell \mathbb{Z}^d \cap [0,L)^d} \mathcal{G}_i\bigg|
\\&
\leq C L^{-d-1/8}+C B^2 L^{-d-1/4} (\log L)^C.
\end{align*}
The estimates \eqref{EstimateRmSmall} and \eqref{EstimateRmBig} imply using Lemma~\ref{CalculusStretchedExponential}b that
\begin{align*}
\mathbb{P}\Bigg[\Bigg|\sum_{m=0}^{\log_2 L+1} \mathcal{R}^m\Bigg| \geq r\Bigg]
\leq C \exp\bigg(-\bigg(\frac{r}{C(d,\gamma,N,K) B (\log L)^{C(d,\gamma)} \ell^{-1/2} L^{-d/2}}\bigg)^{\tilde \gamma}\bigg)
\end{align*}
and therefore for our choice $\ell:=L^{1/2}$
\begin{align*}
\mathbb{P}\Bigg[\Bigg|\sum_{m=0}^{\log_2 L+1} \mathcal{R}^m\Bigg| \geq \frac{1}{2} L^{-1/8}\cdot L^{-d/2} \Bigg]
\leq C \exp\bigg(-\frac{L^{\tilde \gamma/8}}{C(d,\gamma,N,K)B}\bigg).
\end{align*}
Combining this estimate with \eqref{EstimateZ} and
\begin{align*}
X-\sum_{m=0}^{1+\log_2 L} \mathcal{R}^m \stackrel{d}{=} Y+Z,
\end{align*}
we see that there exists $\beta=\beta(d,\gamma)>0$ and $\Lambda=\tilde \Lambda\in \mathbb{R}^{N\times N}_{\operatorname{sym}}$ with $\Lambda>0$ and
\begin{align*}
|\Lambda-\Var X|\leq C B^2 L^{-1/8} L^{-d}
\end{align*}
such that for any measurable $A\subset \mathbb{R}^N$ we have
\begin{align*}
\mathbb{P}\big[X\in A\big]
\leq \int_{\{x\in \mathbb{R}^N:\dist(x,A)\leq L^{-\beta} L^{-d/2}\}} \mathcal{N}_{\Lambda}(x) \,dx + C \exp\Big(-\frac{c}{B^8} L^{2\beta}\Big).
\end{align*}
\end{proof}

In the previous proofs, we have made use of the following elementary lemma.
\begin{lemma}
\label{MultilevelVariableStretchedExponentialBound}
Let $d\geq 1$ and $L\geq 2$. Consider a random field $a$ on $\mathbb{R}^d$ subject to the assumption of finite range of dependence (A) or an $L$-periodic random field subject to the assumption of finite range of dependence (A').
Let $X=X(a)$ be a random variable that is a sum of random variables with multilevel local dependence in the sense of Definition~\ref{ConditionRandomVariable}. Then for $\tilde \gamma:=\gamma/(\gamma+1)$ the concentration estimate
\begin{align*}
||X-\mathbb{E}[X]||_{\exp^{\tilde \gamma}} \leq C(d,\gamma,K) B |\log L|^{d/2} L^{-d/2}
\end{align*}
holds true.
\end{lemma}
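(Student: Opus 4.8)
The plan is to bound $X-\mathbb{E}[X]$ one dependency level at a time. On level $m$ the random variables $X_y^m$ depend on boxes of side $\sim 2^{m}K\log L$, which are placed on the finer lattice $2^m\mathbb{Z}^d$; hence the level-$m$ contribution $\mathcal{R}^m:=\sum_{y\in 2^m\mathbb{Z}^d\cap[0,L)^d}X_y^m$ decomposes into only $\sim(K\log L)^d$ subsums of \emph{independent} random variables, to each of which the concentration estimate for sums of independent random variables with stretched exponential moments (Lemma~\ref{ConcentrationStretchedExponential}) applies. Summing these contributions across the $1+\log_2 L$ levels with the triangle inequality for $||\cdot||_{\exp^{\tilde\gamma}}$ then gives the claim.

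In detail I would first assume (without loss of generality) $\mathbb{E}[X_y^m]=0$ and write $X=\sum_{m=0}^{1+\log_2 L}\mathcal{R}^m$. Fix a level $m$ with $2^{m+2}K\log L\leq L/2$ and let $p(m)$ be the smallest integer with $2^{p(m)}\geq 2^{m+2}K\log L$; partition $2^m\mathbb{Z}^d\cap[0,L)^d$ into the $(2^{p(m)-m})^d\leq C(d)(K\log L)^d$ cosets $(j+2^{p(m)}\mathbb{Z}^d)\cap[0,L)^d$, $j\in 2^m\mathbb{Z}^d\cap[0,2^{p(m)})^d$. Two distinct indices in one coset differ by at least $2^{p(m)}$ in the $\infty$-norm (and, in the periodic case (A'), also in the periodicity-adjusted $\infty$-distance, since $2^{p(m)}\leq L/2$), so their boxes $y+K\log L[-2^m,2^m]^d$ are at distance at least $2^{p(m)}-2\cdot 2^m K\log L\geq 2\cdot 2^m K\log L>1$; by Definition~\ref{ConditionRandomVariable} and the finite range of dependence assumption the random variables in one coset are therefore independent. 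Each coset has at most $(L/2^{p(m)})^d$ members and $||X_y^m||_{\exp^\gamma}\leq BL^{-d}$, so Lemma~\ref{ConcentrationStretchedExponential} bounds the coset sum by $C(\gamma)(L/2^{p(m)})^{d/2}BL^{-d}$ in $||\cdot||_{\exp^{\tilde\gamma}}$, and summing over the $\leq C(d)(K\log L)^d$ cosets (and using $2^{p(m)}\geq 2^{m+2}K\log L$) yields
\begin{align*}
||\mathcal{R}^m||_{\exp^{\tilde\gamma}}\leq C(d)(K\log L)^d\, C(\gamma)\Big(\tfrac{L}{2^{p(m)}}\Big)^{d/2}BL^{-d}\leq C(d,\gamma)B(K\log L)^{d/2}(2^m)^{-d/2}L^{-d/2}.
\end{align*}
For the remaining (top) levels, with $2^{m+2}K\log L>L/2$, there are only $(L/2^m)^d\leq C(d)(K\log L)^d$ summands $X_y^m$, so the plain triangle inequality (together with $||\cdot||_{\exp^{\tilde\gamma}}\leq||\cdot||_{\exp^\gamma}$) gives $||\mathcal{R}^m||_{\exp^{\tilde\gamma}}\leq (L/2^m)^d BL^{-d}=(2^m)^{-d}B$, which is of the same or better order.

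Finally I would sum over $m$, using $\sum_{m\geq 0}(2^m)^{-d/2}\leq C(d)$, to conclude $||X-\mathbb{E}[X]||_{\exp^{\tilde\gamma}}\leq C(d,\gamma,K)B(\log L)^{d/2}L^{-d/2}$; the non-periodic case (A) is identical with $\dist$ replacing $\dist_\per$. This is essentially the grouping used in Steps 2--3 of the proof of Theorem~\ref{TheoremModerateDeviations}. I do not expect a genuine obstacle here; the only mildly delicate point is the bookkeeping of the cosets near the top levels and, for general (non-dyadic) $L$ in the periodic case, the wrap-around gap of a coset — both are handled exactly as in the cited steps by mild padding of $p(m)$ and absorbing the resulting factors into the constants.
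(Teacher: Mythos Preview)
Your proposal is correct and follows essentially the same approach as the paper's proof: both decompose $X$ level by level, split each level $\mathcal{R}^m$ into $\leq C(d)(K\log L)^d$ cosets $(j+2^{p(m)}\mathbb{Z}^d)\cap[0,L)^d$ of independent random variables, apply Lemma~\ref{ConcentrationStretchedExponential} to each coset, and sum the resulting geometric series in $m$. The only cosmetic difference is that the paper handles the top levels by capping $p(m)$ at $\log_2 L$ rather than by a separate triangle-inequality argument, but this is pure bookkeeping.
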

\begin{proof}
Defining $p(m)$ to be the smallest integer with $2^{p(m)}\geq 2^{m+2} K |\log L|$ (but defining $p(m)=\log_2 L$ if this integer were larger than $\log_2 L$), we rewrite
\begin{align*}
X=\sum_{m=0}^{\log_2 L+1} \sum_{k\in 2^m \mathbb{Z}^d \cap [0,2^{p(m)})^d}
\sum_{j\in 2^{p(m)}\mathbb{Z}^d\cap [0,L)^d,j+k\in [0,L)^d} X_{j+k}^m.
\end{align*}
By Definition~\ref{ConditionRandomVariable}, the inner sum is now a sum of independent random variables. Applying Lemma~\ref{ConcentrationStretchedExponential} to this sum, we obtain
\begin{align*}
&||X-\mathbb{E}[X]||_{\exp^{\tilde \gamma}}
\\&
\leq \sum_{m=0}^{\log_2 L+1} \sum_{k\in 2^m \mathbb{Z}^d \cap [0,2^{p(m)})^d}
\bigg|\bigg|\sum_{j\in 2^{p(m)}\mathbb{Z}^d\cap [0,L)^d,j+k\in [0,L)^d} (X_{j+k}^m-\mathbb{E}[X_{j+k}^m])\bigg|\bigg|_{\exp^{\tilde \gamma}}
\\&
\leq \sum_{m=0}^{\log_2 L+1} \sum_{k\in 2^m \mathbb{Z}^d \cap [0,2^{p(m)})^d} C(\gamma) \bigg(\frac{L^d}{(2^{p(m)})^d}\bigg)^{1/2} \max_i ||X_{i}^m-\mathbb{E}[X_{i}^m]||_{\exp^{\gamma}}
\\&
\leq \sum_{m=0}^{\log_2 L+1} \frac{(2^{p(m)})^d}{(2^m)^d} \cdot C(d,\gamma) \bigg(\frac{L^d}{(2^{p(m)})^d}\bigg)^{1/2} B L^{-d}
\\&
\leq \sum_{m=0}^{\log_2 L+1} \frac{(2^m)^d (K\log L)^d}{(2^m)^d} \cdot C(d,\gamma) \bigg(\frac{L^d}{(2^m)^d (K\log L)^d}\bigg)^{1/2} B L^{-d}
\\&
\leq C(d,\gamma,K) B (\log L)^{d/2} L^{-d/2}.
\end{align*}
\end{proof}

%
%
%
%

\appendix

\section{Auxiliary results for Stein's method in the multivariate setting}

In this appendix, we provide the proof of the bounds on the solutions to the ``smoothed'' Stein's equation stated in Proposition~\ref{PropositionSolutionSteinEquation} and the estimate on the distance $\mathcal{D}^{\bar L}$ in terms of the smoothed distance $\mathcal{D}_{\varepsilon}^{\bar L}$ stated in Lemma~\ref{SmoothingEstimateLemma}.

\begin{proof}[Proof of Proposition~\ref{PropositionSolutionSteinEquation}]
{\bf Proof of a).}
Following the argument of \cite{Goetze} (see also \cite{ChenGoldsteinShao}), we consider the function
\begin{align}
\label{Definitionfepsilon}
f_\varepsilon(x)=\frac{1}{2} \int_{\varepsilon^2}^1 \Big(\int_{\mathbb{R}^N}\phi(\sqrt{1-s}x-\sqrt{s}z)\mathcal{N}_\Lambda(z) \,dz-\int_{\mathbb{R}^N} \mathcal{N}_\Lambda(z)\phi(z)\,dz\Big) \frac{1}{1-s} \,ds
\end{align}
which in the case of smooth $\phi$ with compactly supported derivative $\nabla \phi$ satisfies
\begin{align*}
&-(\nabla \cdot \Lambda \nabla f_\varepsilon)(x)
+(x\cdot \nabla f_\varepsilon)(x)
\\&
=-\frac{1}{2} \int_{\varepsilon^2}^1 \int_{\mathbb{R}^N} \Lambda: (\nabla^2 \phi)(\sqrt{1-s}x-\sqrt{s}z)\mathcal{N}_\Lambda(z) \,dz \,ds
\\&~~~
+ \frac{1}{2} \int_{\varepsilon^2}^1 \int_{\mathbb{R}^N} x\cdot (\nabla \phi)(\sqrt{1-s}x-\sqrt{s}z) \mathcal{N}_\Lambda(z) \frac{1}{\sqrt{1-s}} \,dz \,ds
\\&
=-\frac{1}{2} \int_{\varepsilon^2}^1 \int_{\mathbb{R}^N} \Lambda: (\nabla^2 \phi)(\sqrt{1-s}x-\sqrt{s}z)\mathcal{N}_\Lambda(z) \,dz \,ds
\\&~~~
- \frac{1}{2} \int_{\varepsilon^2}^1 \int_{\mathbb{R}^N} z\cdot (\nabla \phi)(\sqrt{1-s}x-\sqrt{s}z) \mathcal{N}_\Lambda(z) \frac{1}{\sqrt{s}} \,dz \,ds
\\&~~~
+\int_{\varepsilon^2}^1 -\frac{d}{ds} \int_{\mathbb{R}^N} \phi(\sqrt{1-s}x-\sqrt{s}z) \mathcal{N}_\Lambda(z) \,dz \,ds
\\&
=-\frac{1}{2} \int_{\varepsilon^2}^1 \int_{\mathbb{R}^N} \phi(\sqrt{1-s}x-\sqrt{s}z) \frac{1}{s} \Big( \big(\nabla \cdot (\Lambda \nabla \mathcal{N}_\Lambda)\big)(z)+\nabla \cdot (z \mathcal{N}_\Lambda(z)) \Big) \,dz \,ds
\\&~~~
+\int_{\mathbb{R}^N} \phi(\sqrt{1-\varepsilon^2}x-\varepsilon z) \mathcal{N}_\Lambda(z) \,dz-\int_{\mathbb{R}^N} \phi(z) \mathcal{N}_\Lambda(z) \,dz
\\&
=\int_{\mathbb{R}^N} \phi(\sqrt{1-\varepsilon^2}x-\varepsilon z) \mathcal{N}_\Lambda(z) \,dz
-\int_{\mathbb{R}^N} \phi(z) \mathcal{N}_\Lambda(z) \,dz.
\end{align*}
For general $\phi\in \Phi_\Lambda^{\bar L}$, this equation follows by approximation. Hence, \eqref{MollifiedSteinEquation} follows from the additional computation
\begin{align*}
&\int_{\mathbb{R}^N} \phi_\varepsilon(x) \mathcal{N}_\Lambda(x) \,dx
\\
&=\int_{\mathbb{R}^N} \int_{\mathbb{R}^N} \phi(\sqrt{1-\varepsilon^2}x-\varepsilon z) \mathcal{N}_\Lambda(x) \mathcal{N}_\Lambda(z) \,dz \,dx
\\&
\stackrel{\eqref{MultiplyGaussians}}{=}
\int_{\mathbb{R}^N} \int_{\mathbb{R}^N} \phi(\sqrt{1-\varepsilon^2}x-\varepsilon z) \mathcal{N}_\Lambda(\sqrt{1-\varepsilon^2}x-\varepsilon z) \mathcal{N}_\Lambda(\sqrt{1-\varepsilon^2}z+\varepsilon x) \,dz \,dx
\\&
=
\int_{\mathbb{R}^N} \int_{\mathbb{R}^N} \phi(\tilde x) \mathcal{N}_\Lambda(\tilde x) \mathcal{N}_\Lambda(\tilde z)  \,d\tilde z \,d\tilde x
\\&
=
\int_{\mathbb{R}^N} \phi(x) \mathcal{N}_\Lambda(x) \,dx
\end{align*}
where we have used the fact that $(\tilde x,\tilde z)=(\sqrt{1-\varepsilon^2}x-\varepsilon z,\sqrt{1-\varepsilon^2}z+\varepsilon x)$ is an orthogonal linear transformation. This finishes the proof of Proposition~\ref{PropositionSolutionSteinEquation}a.

{\bf Proof of b).}
In order to establish \eqref{BoundThirdDerivativeLinfty}, we derive from \eqref{Definitionfepsilon} the following representation for the third spatial derivative of $f_\varepsilon$:
\begin{align}
\label{ThirdDerivative}
\nabla^3 f_\varepsilon (x) = \frac{1}{2} \int_{\varepsilon^2}^1 \frac{\sqrt{1-s}}{s^{3/2}} \int_{\mathbb{R}^N} \phi(\sqrt{1-s}x-\sqrt{s}z) \nabla^3 \mathcal{N}_\Lambda (z) \,dz \,ds.
\end{align}
This entails for any $\tau\in (0,1]$
\begin{align*}
&|\nabla^3 f_\varepsilon(x)|
\\
&\leq
\frac{1}{2}\int_{\varepsilon^2}^1 \frac{\sqrt{1-s}}{s} \int_{\mathbb{R}^N} |\nabla \phi|(\sqrt{1-s}x-\sqrt{s}z) |\nabla^2 \mathcal{N}_\Lambda| (z) \,dz\,ds
\\&
\leq
\frac{1}{2} \int_{\varepsilon^2}^1 \frac{\sqrt{1-s}}{s} \int_{\mathbb{R}^N} |\nabla \phi|(\sqrt{1-s}x-\sqrt{s}z) \frac{|\nabla^2 \mathcal{N}_\Lambda(z)|}{\mathcal{N}_{(1+\tau)\Lambda}(z)} \mathcal{N}_{(1+\tau)\Lambda} (z) \,dz\,ds
\\&
\stackrel{\eqref{BoundGaussianSecondDerivative}}{\leq}
\frac{3}{2} (1+\tau)^{(N+2)/2} \tau^{-1} |\Lambda^{-1}|
\\&~~~~~~~~~~
\times
\int_{\varepsilon^2}^1 \frac{\sqrt{1-s}}{s} \int_{\mathbb{R}^N} |\nabla \phi|(\sqrt{1-s}x-\sqrt{s}z) \mathcal{N}_{(1+\tau)\Lambda} (z) \,dz\,ds.
\end{align*}
Combining this estimate with the bound (valid for any $s\in (0,1)$)
\begin{align}
\label{EstimateFirstDerivative}
&\int_{\mathbb{R}^N} |\nabla \phi|(\sqrt{1-s}x-\sqrt{s}z) \mathcal{N}_{(1+\tau)\Lambda} (z) \,dz
\\&
\nonumber
=
\int_{\mathbb{R}^N} |\nabla \phi|(\tilde z) \mathcal{N}_{(1+\tau)\Lambda} \Big(\frac{1}{\sqrt{s}}(\sqrt{1-s}x-\tilde z)\Big) \sqrt{s}^{-N} \,d\tilde z
\\&
\nonumber
\leq
\int_{\mathbb{R}^N} |\nabla \phi|(\tilde z) \mathcal{N}_{(1+\tau)\Lambda} \big(\sqrt{1-s} x-\tilde z\big)  s^{-N/2} \,d\tilde z
\\&
\nonumber
\stackrel{\eqref{ConvolutionGaussian}}{=}
\int_{\mathbb{R}^N} \int_{\mathbb{R}^N}|\nabla \phi|(\tilde z) \mathcal{N}_{\Lambda} (\tilde z - \sqrt{1-s} x + w) \mathcal{N}_{\tau\Lambda}(w)  s^{-N/2} \,d\tilde z \,dw
\\&
\nonumber
\stackrel{\eqref{ClassPhiDifferentialBound}}{\leq}
s^{-N/2} \int_{\mathbb{R}^N}  \mathcal{N}_{\tau\Lambda}(w) \,dw
\\&
\nonumber
= s^{-N/2},
\end{align}
we infer choosing $\tau:=2/(N+2)$
\begin{align*}
|\nabla^3 f_\varepsilon(x)|
&\leq \frac{3}{2} (1+\tau)^{(N+2)/2} \tau^{-1} |\Lambda^{-1}|
\int_{\varepsilon^2}^1 s^{-(N+2)/2} \,ds
\\&
\leq \frac{3}{2}\cdot e \cdot \frac{N+2}{2} \cdot |\Lambda^{-1}| \cdot \frac{2}{N} (\varepsilon^{-N}-1).
\end{align*}
This proves \eqref{BoundThirdDerivativeLinfty}.

{\bf Proof of c).}
We now turn to the proof of Proposition~\ref{PropositionSolutionSteinEquation}c. The bound \eqref{UpperBoundByH} is immediate from Lemma~\ref{PropertiesOfFunctionClass}c.

Computing the second spatial derivative of $f_\varepsilon$ as defined by \eqref{Definitionfepsilon}, we infer
\begin{align}
\label{SecondDerivative}
\nabla^2 f_\varepsilon (x) = \frac{1}{2} \int_{\varepsilon^2}^1 \frac{1}{s} \int_{\mathbb{R}^N} \phi(\sqrt{1-s}x-\sqrt{s}z) \nabla^2 \mathcal{N}_\Lambda (z) \,dz \,ds.
\end{align}
In order to derive the estimates \eqref{BoundMollifiedSecondDerivativeOscLimitDistribution} and \eqref{BoundMollifiedSecondDerivativeOscInFunctionClass}, we estimate for $r>0$ and $\tau:=2/(N+2)$
\begin{align*}
&\int_{\mathbb{R}^N} (\osc_{r} \nabla^2 f_\varepsilon) (x) \mathcal{N}_\Lambda(x-x_0) \,dx
\\&
\leq \frac{1}{2} \int_{\mathbb{R}^N} \int_{\varepsilon^2}^1 \frac{1}{s} \int_{\mathbb{R}^N} (\osc_{\sqrt{1-s}r} \phi)(\sqrt{1-s}x-\sqrt{s}z) |\nabla^2 \mathcal{N}_\Lambda| (z)  \,dz  \,ds ~ \mathcal{N}_{\Lambda}(x-x_0)\,dx
\\&
\stackrel{\eqref{BoundGaussianSecondDerivative}}{\leq}
\frac{1}{2} \int_{\varepsilon^2}^1 \frac{1}{s} \int_{\mathbb{R}^N} \int_{\mathbb{R}^N} (\osc_{\sqrt{1-s}r} \phi)(\sqrt{1-s}x-\sqrt{s}z) 
\\&~~~~~~~~~~~~~~~~~~~~~~~~~~~~~~
\times
3(1+\tau)^{(N+2)/2}\tau^{-1} |\Lambda^{-1}| \mathcal{N}_{(1+\tau)\Lambda} (z)
\mathcal{N}_{\Lambda}(x-x_0) \,dz \,dx \,ds
\\&
\leq
\frac{3e(N+2)}{4} |\Lambda^{-1}| \int_{\varepsilon^2}^1 \frac{1}{s} \int_{\mathbb{R}^N} \int_{\mathbb{R}^N} (\osc_{\sqrt{1-s}r} \phi)(\sqrt{1-s}x-\sqrt{s}z)
\\&~~~~~~~~~~~~~~~~~~~~~~~~~~~~~~~~~~~~~~~~~~~~~~~
\times
(\mathcal{N}_{\tau\Lambda} \ast \mathcal{N}_{\Lambda}) (z) \mathcal{N}_{\Lambda}(x-x_0) \,dz \,dx \,ds
\\&
=
\frac{3e(N+2)}{4} |\Lambda^{-1}| \int_{\varepsilon^2}^1 \frac{1}{s} \int_{\mathbb{R}^N} \int_{\mathbb{R}^N} (\mathcal{N}_{s\tau\Lambda} \ast \osc_{\sqrt{1-s}r} \phi)(\sqrt{1-s}(x+x_0)-\sqrt{s}z)
\\&~~~~~~~~~~~~~~~~~~~~~~~~~~~~~~~~~~~~~~~~~~~~~~~
\times
\mathcal{N}_{\Lambda} (z) \mathcal{N}_{\Lambda}(x) \,dz \,dx \,ds.
\end{align*}
Invoking the change of variables $(\tilde x,\tilde z):=(\sqrt{1-s}x-\sqrt{s}z,\sqrt{1-s}z+\sqrt{s}x)$ (note that this is a linear orthogonal transformation) as well as the multiplication property \eqref{MultiplyGaussians}, we deduce
\begin{align*}
&\int_{\mathbb{R}^N}  (\osc_{r} \nabla^2 f_\varepsilon) (x) \mathcal{N}_\Lambda(x-x_0) \,dx
\\&
\leq
\frac{3e(N+2)}{4} |\Lambda^{-1}| \int_{\varepsilon^2}^1 \frac{1}{s} \int_{\mathbb{R}^N} \int_{\mathbb{R}^N} (\mathcal{N}_{s\tau\Lambda} \ast \osc_{\sqrt{1-s}r} \phi)(\tilde x+\sqrt{1-s}x_0)
\\&~~~~~~~~~~~~~~~~~~~~~~~~~~~~~~~~~~~~~~~~~~~~~~~~~~\times
\mathcal{N}_{\Lambda} (\tilde x) \mathcal{N}_{\Lambda}(\tilde z) \,d\tilde x \,d\tilde z \,ds
\\&
=
\frac{3e(N+2)}{4} |\Lambda^{-1}| \int_{\varepsilon^2}^1 \frac{1}{s} \int_{\mathbb{R}^N} \int_{\mathbb{R}^N} \int_{\mathbb{R}^N} (\osc_{\sqrt{1-s}r} \phi)(\tilde x+\sqrt{1-s}x_0-w)
\\&~~~~~~~~~~~~~~~~~~~~~~~~~~~~~~~~~~~~~~~~~~~~~~~~~~~~~\times
\mathcal{N}_{\Lambda} (\tilde x)
\mathcal{N}_{s\tau\Lambda}(w) \mathcal{N}_{\Lambda}(\tilde z) \,dw \,d\tilde x \,d\tilde z \,ds
\\&
\stackrel{\eqref{DefinitionClassPhi}}{\leq}
\frac{9(N+2)}{4} |\Lambda^{-1}| \int_{\varepsilon^2}^1 \frac{1}{s} \int_{\mathbb{R}^N}\int_{\mathbb{R}^N} \sqrt{1-s}r \mathcal{N}_{s\tau\Lambda}(w) \mathcal{N}_{\Lambda}(\tilde z) \,dw \,d\tilde z \,ds
\end{align*}
and hence
\begin{align}
\label{EstimateOscSecondDerivative}
\int_{\mathbb{R}^N}  (\osc_{r} \nabla^2 f_\varepsilon) (x) \mathcal{N}_\Lambda(x-x_0) \,dx
\leq
16N |\Lambda^{-1}| |\log \varepsilon| r
\end{align}
for any $r>0$ and any $x_0\in \mathbb{R}^N$. As a consequence, we get
\begin{align*}
&\int_{\mathbb{R}^N} 2(\mathcal{N}_{\delta^2 \Id_N} \ast \osc_{K\delta} \nabla^2 f_\varepsilon) (x) \mathcal{N}_\Lambda(x) \,dx
\\&
=2\int_{\mathbb{R}^N} \int_{\mathbb{R}^N} (\osc_{K\delta} \nabla^2 f_\varepsilon) (x-w) \mathcal{N}_{\delta^2 \Id_N}(w) \mathcal{N}_\Lambda(x) \,dx\,dw
\\&
\leq 2\int_{\mathbb{R}^N} 16N |\Lambda^{-1}| |\log \varepsilon| K \delta \mathcal{N}_{\delta^2 \Id_N}(w) \,dx,
\end{align*}
which (by our choice of $K$) immediately proves \eqref{BoundMollifiedSecondDerivativeOscLimitDistribution}.

Furthermore, \eqref{EstimateOscSecondDerivative} entails that
\begin{align*}
h(x):=\frac{1}{32 N |\Lambda^{-1}| |\log \varepsilon| K} (\osc_{K\delta} \nabla^2 f_\varepsilon) (x)
\end{align*}
satisfies the estimates
\begin{align*}
\int_{\mathbb{R}^N}  |h(x)| \mathcal{N}_\Lambda(x-x_0) \,dx
\leq \delta
\end{align*}
for any $x_0\in \mathbb{R}^N$ and (by the inequality $(\osc_r \osc_{K\delta} f)(x) \leq \osc_{r+K\delta} f(x)$)
\begin{align*}
&\int_{\mathbb{R}^N}  \osc_r h(x) \mathcal{N}_\Lambda(x-x_0) \,dx
\\
&\leq
\frac{1}{32 N |\Lambda^{-1}| |\log \varepsilon| K}
\int_{\mathbb{R}^N}  (\osc_{r+K\delta} \nabla^2 f_\varepsilon)(x) \mathcal{N}_\Lambda(x-x_0) \,dx
\\&
\leq \frac{1}{2 K} (r+K\delta)
\\&
\leq r
\end{align*}
for any $x_0\in \mathbb{R}^N$ and any $r\geq \delta$.
In conclusion, Lemma~\ref{PropertiesOfFunctionClass}d implies
\begin{align*}
\frac{1}{40N} \times
\frac{1}{32 N |\Lambda^{-1}| |\log \varepsilon| K} (\mathcal{N}_{\delta^2\Id_N} \ast \osc_{K\delta} \nabla^2 f_\varepsilon) \in \Phi_\Lambda^{\tilde L}
\end{align*}
for any $\tilde L\geq 4^N (|\Lambda^{1/2}|^N \delta^{-N} +1)$, which proves \eqref{BoundMollifiedSecondDerivativeOscInFunctionClass}.

{\bf Proof of d), Part 1.}
We now establish Proposition~\ref{PropositionSolutionSteinEquation}d. The estimate \eqref{UpperBoundByH2} is immediate by Lemma~\ref{PropertiesOfFunctionClass}c and the inequality $\sup_{|y|\leq \delta} |\nabla^3 f_\varepsilon(x+y)|\leq |\nabla^3 f_\varepsilon(x)|+\osc_\delta \nabla^3 f_\varepsilon(x)$.

Before proceeding, let us first prove the following auxiliary result: The third derivative of $f_\varepsilon$ satisfies the estimate
\begin{align}
\label{EstimateThirdDerivativeLimitDistribution}
\int_{\mathbb{R}^N}  |\nabla^3 f_\varepsilon| (x) \mathcal{N}_\Lambda(x) \,dx
\leq
16 N |\Lambda^{-1}| |\log \varepsilon|,
\end{align}
and has the property
\begin{align}
\label{BoundThirdDerivativeInFunctionClass}
\frac{\varepsilon}{10 N^{3/2} |\Lambda^{-1/2}|^3} |\nabla^3 f_\varepsilon| \in \Phi_\Lambda^{\tilde L}
\end{align}
for any $\tilde L\geq 10 \varepsilon^{-N}$.

The estimate \eqref{EstimateThirdDerivativeLimitDistribution}
is simply a consequence of \eqref{EstimateOscSecondDerivative} in the limit $r\rightarrow 0$.
To show \eqref{BoundThirdDerivativeInFunctionClass} we first establish a uniform bound for $\nabla^4 f_\varepsilon$. This is done in an analogous way to the proof of assertion b) of our proposition: Using \eqref{BoundGaussianThirdDerivative} instead of \eqref{BoundGaussianSecondDerivative}, we deduce
\begin{align*}
&|\nabla^4 f_\varepsilon(x)|
\\
&\leq
\frac{1}{2}\int_{\varepsilon^2}^1 \frac{1-s}{s^{3/2}} \int_{\mathbb{R}^N} |\nabla \phi|(\sqrt{1-s}x-\sqrt{s}z) |\nabla^3 \mathcal{N}_\Lambda| (z) \,dz\,ds
\\&
\stackrel{\eqref{BoundGaussianThirdDerivative}}{\leq}
\frac{5}{2} (1+\tau)^{(N+3)/2} \tau^{-3/2} |\Lambda^{-1/2}|^3
\\&~~~~~~~~~~
\times
\int_{\varepsilon^2}^1 \frac{1}{s^{3/2}} \int_{\mathbb{R}^N} |\nabla \phi|(\sqrt{1-s}x-\sqrt{s}z) \mathcal{N}_{(1+\tau)\Lambda} (z) \,dz\,ds
\end{align*}
and choosing $\tau:=2/(N+3)$ and applying \eqref{EstimateFirstDerivative}, we get
\begin{align}
\nonumber
|\nabla^4 f_\varepsilon(x)|
&\leq \frac{5}{2} \cdot e \cdot \frac{(N+3)^{3/2}}{2^{3/2}} |\Lambda^{-1/2}|^3 \int_{\varepsilon^2}^1 \frac{1}{s^{3/2}} \cdot s^{-N/2} \,ds
\\&
\nonumber
\leq \frac{5}{2} \cdot e \cdot \frac{(N+3)^{3/2}}{2^{3/2}} |\Lambda^{-1/2}|^3\cdot \frac{2}{N+1} \cdot \frac{1}{(\varepsilon^2)^{(N+1)/2}}
\\&
\label{LipschitzBoundThirdDerivative}
\leq 20 \sqrt{N+3} |\Lambda^{-1/2}|^3 \varepsilon^{-N-1}.
\end{align}
In order to show \eqref{BoundThirdDerivativeInFunctionClass}, for any $r>0$ we
estimate starting with \eqref{ThirdDerivative} and choosing $\tau:=2/(N+3)$
\begin{align*}
&\int_{\mathbb{R}^N} (\osc_{r} \nabla^3 f_\varepsilon) (x) \mathcal{N}_\Lambda(x-x_0) \,dx
\\&
\leq \frac{1}{2} \int_{\varepsilon^2}^1 \frac{\sqrt{1-s}}{s^{3/2}} \int_{\mathbb{R}^N} \int_{\mathbb{R}^N} (\osc_{\sqrt{1-s}r} \phi)(\sqrt{1-s}x-\sqrt{s}z) |\nabla^3 \mathcal{N}_\Lambda| (z) \mathcal{N}_{\Lambda}(x-x_0) \,dz \,dx \,ds
\\&
\stackrel{\eqref{BoundGaussianThirdDerivative}}{\leq}
\frac{1}{2} \int_{\varepsilon^2}^1 \frac{1}{s^{3/2}} \int_{\mathbb{R}^N} \int_{\mathbb{R}^N} (\osc_{\sqrt{1-s}r} \phi)(\sqrt{1-s}x-\sqrt{s}z) 
\\&~~~~~~~~~~~~~~~~~~~~~~~~~~
\times
5 (1+\tau)^{(N+3)/2} \tau^{-3/2} |\Lambda^{-1/2}|^3 \mathcal{N}_{(1+\tau)\Lambda} (z)
\mathcal{N}_{\Lambda}(x-x_0) \,dz \,dx \,ds
\\&
\leq
\frac{5e}{2} \cdot \frac{(N+3)^{3/2}}{2^{3/2}} |\Lambda^{-1/2}|^3 \int_{\varepsilon^2}^1 \frac{1}{s^{3/2}} \int_{\mathbb{R}^N} \int_{\mathbb{R}^N} (\osc_{\sqrt{1-s}r} \phi)(\sqrt{1-s}x-\sqrt{s}z)
\\&~~~~~~~~~~~~~~~~~~~~~~~~~~~~~~~~~~~~~~~~~~~~~~~~~~~~~~~~~~~~
\times
(\mathcal{N}_{\tau\Lambda} \ast \mathcal{N}_{\Lambda}) (z) \mathcal{N}_{\Lambda}(x-x_0) \,dz \,dx \,ds
\\&
=
\frac{5e(N+3)^{3/2}}{2 \cdot \sqrt{2}^3} |\Lambda^{-1/2}|^3 \int_{\varepsilon^2}^1 \frac{1}{s^{3/2}} \int_{\mathbb{R}^N} \int_{\mathbb{R}^N} (\mathcal{N}_{s\tau\Lambda} \ast \osc_{\sqrt{1-s}r} \phi)(\sqrt{1-s}(x+x_0)-\sqrt{s}z)
\\&~~~~~~~~~~~~~~~~~~~~~~~~~~~~~~~~~~~~~~~~~~~~~~~~~~~~~~~~~~~
\times
\mathcal{N}_{\Lambda} (z) \mathcal{N}_{\Lambda}(x) \,dz \,dx \,ds.
\end{align*}
Invoking the change of variables $(\tilde x,\tilde z):=(\sqrt{1-s}x-\sqrt{s}z,\sqrt{1-s}z+\sqrt{s}x)$ (note that this is a linear orthogonal transformation) as well as the multiplication property \eqref{MultiplyGaussians}, we deduce
\begin{align*}
&\int_{\mathbb{R}^N}  (\osc_{r} \nabla^3 f_\varepsilon) (x) \mathcal{N}_\Lambda(x-x_0) \,dx
\\&
\leq
\frac{5(N+3)^{3/2}}{2} |\Lambda^{-1/2}|^3 \int_{\varepsilon^2}^1 \frac{1}{s^{3/2}} \int_{\mathbb{R}^N} \int_{\mathbb{R}^N} (\mathcal{N}_{s\tau\Lambda} \ast \osc_{\sqrt{1-s}r} \phi)(\tilde x+\sqrt{1-s}x_0)
\\&~~~~~~~~~~~~~~~~~~~~~~~~~~~~~~~~~~~~~~~~~~~~~~~~~~~~~~~~~~~\times
\mathcal{N}_{\Lambda} (\tilde x) \mathcal{N}_{\Lambda}(\tilde z) \,d\tilde x \,d\tilde z \,ds
\\&
=
\frac{5(N+3)^{3/2}}{2} |\Lambda^{-1/2}|^3 \int_{\varepsilon^2}^1 \frac{1}{s^{3/2}} \int_{\mathbb{R}^N} \int_{\mathbb{R}^N} \int_{\mathbb{R}^N} (\osc_{\sqrt{1-s}r} \phi)(\tilde x+\sqrt{1-s}x_0-w)
\\&~~~~~~~~~~~~~~~~~~~~~~~~~~~~~~~~~~~~~~~~~~~~~~~~~~~~~~~~~~\times
\mathcal{N}_{\Lambda} (\tilde x)
\mathcal{N}_{s\tau\Lambda}(w) \mathcal{N}_{\Lambda}(\tilde z) \,dw \,d\tilde x \,d\tilde z \,ds
\\&
\stackrel{\eqref{DefinitionClassPhi}}{\leq}
\frac{5(N+3)^{3/2}}{2} |\Lambda^{-1/2}|^3 \int_{\varepsilon^2}^1 \frac{1}{s^{3/2}} \int_{\mathbb{R}^N}\int_{\mathbb{R}^N} \sqrt{1-s}r \mathcal{N}_{s\tau\Lambda}(w) \mathcal{N}_{\Lambda}(\tilde z) \,dw \,d\tilde z \,ds
\end{align*}
and hence
\begin{align}
\label{EstimateOscThirdDerivative}
\int_{\mathbb{R}^N}  (\osc_{r} \nabla^3 f_\varepsilon) (x) \mathcal{N}_\Lambda(x-x_0) \,dx
\leq
10 N^{3/2} |\Lambda^{-1/2}|^3 \frac{r}{\varepsilon}
\end{align}
for any $r>0$ and any $x_0\in \mathbb{R}^N$. Combining this estimate with \eqref{LipschitzBoundThirdDerivative}, we infer \eqref{BoundThirdDerivativeInFunctionClass}. 

{\bf Proof of d), Part 2.} We now turn to the last part of the proof of our proposition.
As a consequence of \eqref{EstimateOscThirdDerivative}, we get
\begin{align*}
&\int_{\mathbb{R}^N} 2(\mathcal{N}_{\delta^2 \Id_N} \ast \osc_{K\delta} \nabla^3 f_\varepsilon) (x) \mathcal{N}_\Lambda(x) \,dx
\\&
=2\int_{\mathbb{R}^N} \int_{\mathbb{R}^N} (\osc_{K\delta} \nabla^3 f_\varepsilon) (x-w) \mathcal{N}_{\delta^2 \Id_N}(w) \mathcal{N}_\Lambda(x) \,dx\,dw
\\&
\leq 2\int_{\mathbb{R}^N} 10 N^{3/2} |\Lambda^{-1/2}|^3 \frac{K\delta}{\varepsilon} \mathcal{N}_{\delta^2 \Id_N}(w) \,dx,
\end{align*}
which gives
\begin{align*}
&\int_{\mathbb{R}^N} 2(\mathcal{N}_{\delta^2 \Id_N} \ast \osc_{K\delta} \nabla^3 f_\varepsilon) (x) \mathcal{N}_{\Lambda}(x) \,dx
\leq 20 N^{3/2} K |\Lambda^{-1/2}|^3 \, \frac{\delta}{\varepsilon}.
\end{align*}
Using $K=2\sqrt{N}+1$ and combining this estimate with \eqref{EstimateThirdDerivativeLimitDistribution}, we infer \eqref{BoundMollifiedThirdDerivativeLimitDistribution}.

Furthermore, we know by \eqref{EstimateOscThirdDerivative} that
\begin{align*}
h(x):=\frac{\varepsilon}{20 N^{3/2} |\Lambda^{-1/2}|^3 K} \osc_{K\delta} \nabla^3 f_\varepsilon(x)
\end{align*}
satisfies the estimate
\begin{align*}
\int_{\mathbb{R}^N}  h(x) \mathcal{N}_\Lambda(x-x_0) \,dx
\leq \delta
\end{align*}
for any $x_0\in \mathbb{R}^N$; by \eqref{EstimateOscThirdDerivative}, it also satisfies the bound
\begin{align*}
&\int_{\mathbb{R}^N}  \osc_r h(x) \mathcal{N}_\Lambda(x-x_0) \,dx
\\
&\leq
\frac{\varepsilon}{20 N^{3/2} |\Lambda^{-1/2}|^3 K}
\int_{\mathbb{R}^N}  (\osc_{r+K\delta} \nabla^3 f_\varepsilon)(x) \mathcal{N}_\Lambda(x-x_0) \,dx
\\&
\leq \frac{1}{2 K} (r+K\delta)
\\&
\leq r
\end{align*}
for any $x_0\in \mathbb{R}^N$ and any $r\geq \delta$.
In conclusion, Lemma~\ref{PropertiesOfFunctionClass}d implies
\begin{align*}
\frac{1}{40N} \times
\frac{\varepsilon}{20 N^{3/2} |\Lambda^{-1/2}|^3 K} (\mathcal{N}_{\delta^2\Id_N} \ast \osc_{K\delta} \nabla^3 f_\varepsilon) \in \Phi_\Lambda^{\tilde L}
\end{align*}
for any $\tilde L\geq 4^N (|\Lambda^{1/2}|^N \delta^{-N} +1)$. In conjunction with \eqref{BoundThirdDerivativeInFunctionClass} and $K=2\sqrt{N}+1$, we infer \eqref{BoundMollifiedThirdDerivativeInFunctionClass}.
\end{proof}

\begin{proof}[Proof of Lemma~\ref{SmoothingEstimateLemma}]
 Without loss of generality (and in order to simplify notation) we may assume
\begin{align*}
|\Lambda|= 1,
\end{align*}
that is the maximal eigenvalue of $\Lambda$ is equal to $1$; the general case then follows upon rescaling $\hat X:=\frac{1}{|\Lambda^{1/2}|}X$, $\hat \phi(x):=\frac{1}{|\Lambda^{1/2}|}\phi(|\Lambda^{1/2}|x)$, $\hat \Lambda:=\Lambda/|\Lambda|$ (note that then $\hat \phi_\varepsilon(x)=\frac{1}{|\Lambda^{1/2}|}\phi_\varepsilon(|\Lambda^{1/2}| x)$).

In order to establish \eqref{SmoothingEstimate}, by the very definition \eqref{DefinitionD} of $\mathcal{D}^{\bar L}$ and upon replacing $\varepsilon$ by $\frac{1}{2}\varepsilon$ it suffices to prove
\begin{align}
\label{SmoothingEstimateSimplified}
\bigg|\mathbb{E}[\phi(X)]-\int_{\mathbb{R}^N} \phi(x)\mathcal{N}_\Lambda(x) \,dx
\bigg|
\leq
10\sqrt{N} \varepsilon
+10^3 N^{3/2} \mathcal{D}_{\varepsilon/2}^{\bar L}(X,\mathcal{N}_\Lambda)
\end{align}
for all $\phi\in \Phi_\Lambda^{\bar L}$ and all $0<\varepsilon\leq 1$.

Introducing the function
\begin{align*}
\tilde \phi_\varepsilon(x):=\phi_\varepsilon(x/\sqrt{1-\varepsilon^2}),
\end{align*}
the error stemming from the smoothing of $\phi$ may be estimated as
\begin{align*}
|\tilde \phi_\varepsilon(x) - \phi(x)|&=|\phi_\varepsilon(x/\sqrt{1-\varepsilon^2}) - \phi(x)|
\\&
\leq
\bigg|\int_{\mathbb{R}^N} \big( \phi(x-\varepsilon z) - \phi(x) \big) \mathcal{N}_\Lambda(z) \,dz\bigg|
\\&
\leq
\int_{\mathbb{R}^N} \osc_{\varepsilon|z|}\phi(x) \mathcal{N}_{\Lambda}(z) \,dz.
\end{align*}
As a consequence, we deduce
\begin{align*}
&\Bigg|\mathbb{E}[\phi(X)]-\int_{\mathbb{R}^N} \phi(x)\mathcal{N}_\Lambda(x) \,dx\Bigg|
\\&
\leq
\bigg|\mathbb{E}[\tilde \phi_\varepsilon(X)]
-\int_{\mathbb{R}^N} \tilde \phi_\varepsilon(x)\mathcal{N}_\Lambda(x) \,dx\bigg|
+\mathbb{E}\big[\big|\tilde \phi_\varepsilon-\phi\big|(X)\big]
\\&~~~~
+\bigg|\int_{\mathbb{R}^N} (\tilde \phi_\varepsilon-\phi)(x)\mathcal{N}_\Lambda(x) \,dx\bigg|
\\&
\leq
\bigg|\mathbb{E}[\tilde \phi_\varepsilon(X)]
-\int_{\mathbb{R}^N} \tilde \phi_\varepsilon(x)\mathcal{N}_\Lambda(x) \,dx\bigg|
+\int_{\mathbb{R}^N} \mathbb{E}\big[\osc_{\varepsilon |z|}\phi(X) \big] \mathcal{N}_{\Lambda}(z) \,dz
\\&~~~~
+\int_{\mathbb{R}^N}\int_{\mathbb{R}^N} \osc_{\varepsilon |z|} \phi(x) \mathcal{N}_{\Lambda}(z) \,dz ~ \mathcal{N}_{\Lambda}(x)  \,dx.
\end{align*}
For $K := 2\sqrt{N}$, by Lemma~\ref{PropertiesOfFunctionClass}c we infer
\begin{align*}
&\Bigg|\mathbb{E}[\phi(X)]-\int_{\mathbb{R}^N} \phi(x)\mathcal{N}_\Lambda(x) \,dx\Bigg|
\\&
\leq
\bigg|\mathbb{E}[\tilde \phi_\varepsilon(X)]
-\int_{\mathbb{R}^N} \tilde \phi_\varepsilon(x)\mathcal{N}_\Lambda(x) \,dx\bigg|
\\&~~~~
+2\int_{\mathbb{R}^N} \mathbb{E}\big[ \big(\mathcal{N}_{2\varepsilon^2 \Id_N} \ast \osc_{(|z|+K) \varepsilon}\phi\big)(X) \big] \mathcal{N}_{\Lambda}(z) \,dz
\\&~~~~
+\int_{\mathbb{R}^N}\int_{\mathbb{R}^N} \osc_{\varepsilon |z|} \phi(x) \mathcal{N}_{\Lambda}(z) \,dz ~ \mathcal{N}_{\Lambda}(x)  \,dx.
\end{align*}
Introducing the definition
\begin{align}
\label{Definitionpsiz}
\psi^z(x):=\big(\mathcal{N}_{\varepsilon^2 (2\Id_N-\Lambda)} \ast \osc_{(|z|+K) \varepsilon}\phi \big)(x),
\end{align}
we see that we may rewrite (recall that $\mathcal{N}_{\varepsilon^2\Lambda} \ast \mathcal{N}_{\varepsilon^2(2\Id_N-\Lambda)}=\mathcal{N}_{2\varepsilon^2\Id_N}$)
\begin{align*}
(\tilde \psi^z)_\varepsilon(x):&= (\psi^z)_\varepsilon(x/\sqrt{1-\varepsilon^2}) := \int_{\mathbb{R}^N} \psi^z(x-\varepsilon w) \mathcal{N}_{\Lambda}(w) \,dw
\\&
= \int_{\mathbb{R}^N} \psi^z(x-w) \mathcal{N}_{\varepsilon^2 \Lambda}(w) \,dw
= (\mathcal{N}_{\varepsilon^2 \Lambda} \ast \psi^z)(x)
\\&
=
\big(\mathcal{N}_{2\varepsilon^2 \Id_N} \ast \osc_{(|z|+K) \varepsilon}\phi \big)(x).
\end{align*}
As a consequence, we deduce the bound 
\begin{align*}
&\Bigg|\mathbb{E}[\phi(X)]-\int_{\mathbb{R}^N} \phi(x)\mathcal{N}_\Lambda(x) \,dx\Bigg|
\\&
\leq
\bigg|\mathbb{E}[\tilde \phi_\varepsilon(X)]
-\int_{\mathbb{R}^N} \tilde \phi_\varepsilon(x)\mathcal{N}_\Lambda(x) \,dx\bigg|
\\&~~~~
+2\int_{\mathbb{R}^N}
\bigg(
\mathbb{E}\big[ (\tilde \psi^z)_\varepsilon (X) \big]
-\int_{\mathbb{R}^N} (\tilde \psi^z)_\varepsilon(x) \mathcal{N}_{\Lambda}(x) \,dx
\bigg) \mathcal{N}_{\Lambda}(z) \,dz
\\&~~~~
+2\int_{\mathbb{R}^N}\int_{\mathbb{R}^N} \big(\mathcal{N}_{2\varepsilon^2 \Id_N} \ast \osc_{(|z|+K) \varepsilon}\phi \big) (x) \mathcal{N}_{\Lambda}(x) \,dx
~ \mathcal{N}_{\Lambda}(z) \,dz
\\&~~~~
+\int_{\mathbb{R}^N}\int_{\mathbb{R}^N} \osc_{\varepsilon |z|} \phi(x) \mathcal{N}_{\Lambda}(z) \,dz ~ \mathcal{N}_{\Lambda}(x)  \,dx
\end{align*}
and therefore
\begin{align}
\nonumber
&\Bigg|\mathbb{E}[\phi(X)]-\int_{\mathbb{R}^N} \phi(x)\mathcal{N}_\Lambda(x) \,dx\Bigg|
\\&
\nonumber
\leq
\bigg|\mathbb{E}[\tilde \phi_\varepsilon(X)]
-\int_{\mathbb{R}^N} \tilde \phi_\varepsilon(x)\mathcal{N}_\Lambda(x) \,dx\bigg|
\\&~~~~
\nonumber
+2\int_{\mathbb{R}^N}
\bigg(
\mathbb{E}\big[ (\tilde \psi^z)_\varepsilon (X) \big]
-\int_{\mathbb{R}^N} (\tilde \psi^z)_\varepsilon(x) \mathcal{N}_{\Lambda}(x) \,dx
\bigg) \mathcal{N}_{\Lambda}(z) \,dz
\\&~~~~
\nonumber
+2\int_{\mathbb{R}^N}\int_{\mathbb{R}^N}\int_{\mathbb{R}^N} \osc_{(|z|+K) \varepsilon}\phi (x-w) \mathcal{N}_{2\varepsilon^2 \Id_N}(w) \mathcal{N}_{\Lambda}(x) \,dx \,dw
~ \mathcal{N}_{\Lambda}(z) \,dz
\\&~~~~
\nonumber
+\int_{\mathbb{R}^N}\int_{\mathbb{R}^N} \osc_{\varepsilon |z|} \phi(x) \mathcal{N}_{\Lambda}(z) \,dz ~ \mathcal{N}_{\Lambda}(x)  \,dx
\\&
\nonumber
\stackrel{\eqref{DefinitionClassPhi}}{\leq}
\bigg|\mathbb{E}[\tilde \phi_\varepsilon(X)]
-\int_{\mathbb{R}^N} \tilde \phi_\varepsilon(x)\mathcal{N}_\Lambda(x) \,dx\bigg|
\\&~~~~
\nonumber
+2\int_{\mathbb{R}^N}
\bigg(
\mathbb{E}\big[ (\tilde \psi^z)_\varepsilon (X) \big]
-\int_{\mathbb{R}^N} (\tilde \psi^z)_\varepsilon(x) \mathcal{N}_{\Lambda}(x) \,dx
\bigg) \mathcal{N}_{\Lambda}(z) \,dz
\\&~~~~
\nonumber
+2\int_{\mathbb{R}^N}\int_{\mathbb{R}^N} (|z|+K) \varepsilon \mathcal{N}_{2\varepsilon^2 \Id_N}(w) \,dw
~ \mathcal{N}_{\Lambda}(z) \,dz
\\&~~~~
\nonumber
+\int_{\mathbb{R}^N} \varepsilon |z| \mathcal{N}_{\Lambda}(z) \,dz
\\&
\label{EstimateMollifiedIntermediateStep}
\leq
\bigg|\mathbb{E}[\tilde \phi_\varepsilon(X)]
-\int_{\mathbb{R}^N} \tilde \phi_\varepsilon(x)\mathcal{N}_\Lambda(x) \,dx\bigg|
\\&~~~~\nonumber
+2\int_{\mathbb{R}^N}
\bigg(
\mathbb{E}\big[ (\tilde \psi^z)_\varepsilon (X) \big]
-\int_{\mathbb{R}^N} (\tilde \psi^z)_\varepsilon(x) \mathcal{N}_{\Lambda}(x) \,dx
\bigg) \mathcal{N}_{\Lambda}(z) \,dz
\\&~~~~\nonumber
+\big(2\sqrt{N}+2K\big)\varepsilon
\\&~~~~\nonumber
+ \sqrt{N} \varepsilon,
\end{align}
where in the last step we have used H\"older's inequality and the moment estimate $\int_{\mathbb{R}^N} |z|^2 \mathcal{N}_\Lambda(z) \,dz\leq \operatorname{tr} \Lambda \leq N$.

Given $\psi\in \Phi_\Lambda^{\bar L}$, introducing the notation
\begin{align*}
\hat\psi(x):=\psi(x/\sqrt{1-\varepsilon^2/4})
\end{align*}
and the notation
\begin{align*}
\tilde \psi_\varepsilon(x)
&:=\int_{\mathbb{R}^N} \psi(x-\varepsilon z) \mathcal{N}_\Lambda(z) \,dz =\psi_\varepsilon(x/\sqrt{1-\varepsilon^2})
\end{align*}
we may write
\begin{align}
\nonumber
\tilde \psi_\varepsilon(x)
&=\int_{\mathbb{R}^N} \psi(x-\varepsilon z) \mathcal{N}_\Lambda(z) \,dz
=\int_{\mathbb{R}^N} \hat \psi(\sqrt{1-\varepsilon^2/4}\, x-\sqrt{1-\varepsilon^2/4} \,\varepsilon z) \mathcal{N}_\Lambda(z) \,dz
\\&
\nonumber
=\int_{\mathbb{R}^N} \hat \psi(\sqrt{1-\varepsilon^2/4}\, x-z) \mathcal{N}_{(1-\varepsilon^2/4)\varepsilon^2\Lambda}(z) \,dz
\\&
\nonumber
=\int_{\mathbb{R}^N} (\mathcal{N}_{(3/4-\varepsilon^2/4) \varepsilon^2 \Lambda} \ast \hat \psi)(\sqrt{1-\varepsilon^2/4}\, x-z) \mathcal{N}_{\frac{1}{4}\varepsilon^2\Lambda}(z) \,dz
\\&
\label{RewriteMollification}
=\theta_{\varepsilon/2}(x)
\end{align}
for $\theta_{\varepsilon/2}(x):=\int_{\mathbb{R}^N} \theta(\sqrt{1-\varepsilon^2/4}\, x-\frac{1}{2}\varepsilon z) \mathcal{N}_{\Lambda}(z) \,dz$ with
\begin{align*}
\theta := \mathcal{N}_{(3/4-\varepsilon^2/4)\varepsilon^2 \Lambda} \ast \hat \psi.
\end{align*}
Note that by Lemma~\ref{PropertiesOfFunctionClass} b) and a), from $\psi\in \Phi_\Lambda^{\bar L}$ it follows that $\frac{1}{2} \hat \psi \in \Phi_\Lambda^{\bar L}$ and therefore also $\frac{1}{2} \theta \in \Phi_\Lambda^{\bar L}$. Applying these considerations for $\psi:=\phi$, we deduce from estimate \eqref{EstimateMollifiedIntermediateStep}
\begin{align}
\label{EstimateMollifiedDistanceSecondIntermediate}
&\Bigg|\mathbb{E}[\phi(X)]-\int_{\mathbb{R}^N} \phi(x)\mathcal{N}_\Lambda(x) \,dx\Bigg|
\\&
\nonumber
\leq
2\mathcal{D}^{\bar L}_{\varepsilon/2}(X,\mathcal{N}_\Lambda)
\\&~~~~
\nonumber
+2\int_{\mathbb{R}^N}
\bigg(
\mathbb{E}\big[ (\tilde \psi^z)_\varepsilon (X) \big]
-\int_{\mathbb{R}^N} (\tilde \psi^z)_\varepsilon(x) \mathcal{N}_{\Lambda}(x) \,dx
\bigg) \mathcal{N}_{\Lambda}(z) \,dz
\\&~~~~
\nonumber
+\big(2K + 3\sqrt{N}\big)\varepsilon.
\end{align}
Finally, notice that Lemma~\ref{PropertiesOfFunctionClass}d entails (with $h(x):=\frac{1}{|z|+K+1} \osc_{(|z|+K)\varepsilon} \phi$ and $\delta:=\varepsilon$ as well as $K:=2\sqrt{N}$; note that $\osc_r h(x)\leq \frac{1}{|z|+K+1} \osc_{(|z|+K)\varepsilon+r} \phi(x)$)
\begin{align*}
\frac{1}{40N(|z|+K+1)} (\mathcal{N}_{\varepsilon^2 \Id_N} \ast \osc_{(|z|+K)\varepsilon}\phi) \in \Phi_\Lambda^{\tilde L}
\end{align*}
for any $\tilde L\geq 4^N (|\Lambda^{1/2}|^N \varepsilon^{-N} + 1)$, i.\,e.\ in particular for $\tilde L={\bar L}$.
Using $\psi^z=\mathcal{N}_{\varepsilon^2(\Id_N-\Lambda)} \ast (\mathcal{N}_{\varepsilon^2 \Id_N} \ast \osc_{(|z|+K)\varepsilon}\phi)$ (by definition \eqref{Definitionpsiz} and \eqref{ConvolutionGaussian}) and Lemma~\ref{PropertiesOfFunctionClass}a this yields
\begin{align*}
\frac{1}{40N(|z|+K+1)} \psi^z \in \Phi_\Lambda^{\bar L}.
\end{align*}
Consequently, the function $\hat \psi^z(x):=\psi^z(x/\sqrt{1-\varepsilon^2/4})$ satisfies by Lemma~\ref{PropertiesOfFunctionClass}b
\begin{align*}
\frac{1}{2\times 40N(|z|+K+1)} \hat \psi^z \in \Phi_\Lambda^{\bar L}.
\end{align*}
Applying the considerations around \eqref{RewriteMollification} to $\psi:=\psi^z/(2\times 40N(|z|+K+1))$, we deduce from \eqref{EstimateMollifiedDistanceSecondIntermediate}
\begin{align*}
&\Bigg|\mathbb{E}[\phi(X)]-\int_{\mathbb{R}^N} \phi(x)\mathcal{N}_\Lambda(x) \,dx\Bigg|
\\&
\leq
2\mathcal{D}^{\bar L}_{\varepsilon/2}(X,\mathcal{N}_\Lambda)
\\&~~~~
+2\int_{\mathbb{R}^N} 2\times 40N(|z|+K+1)
\mathcal{D}_{\varepsilon/2}^{\bar L}(X,\mathcal{N}_\Lambda) \mathcal{N}_{\Lambda}(z) \,dz
\\&~~~~
+\big(2K+3\sqrt{N}\big)\varepsilon
\end{align*}
which enables us to conclude
\begin{align*}
&\Bigg|\mathbb{E}[\phi(X)]-\int_{\mathbb{R}^N} \phi(x)\mathcal{N}_\Lambda(x) \,dx\Bigg|
\\&
\leq
2\mathcal{D}_{\varepsilon/2}^{\bar L}(X,\mathcal{N}_\Lambda)
\\&~~~~
+2\mathcal{D}_{\varepsilon/2}^{\bar L}(X,\mathcal{N}_\Lambda) \times 2 \times 40N \times (\sqrt{N} + K +1) 
\\&~~~~
+\big(2K+3\sqrt{N}\big)\varepsilon.
\end{align*}
Recalling that we have assumed $|\Lambda|\leq 1$ and that $K=2\sqrt{N}$, we deduce \eqref{SmoothingEstimateSimplified}.
\end{proof}

\begin{lemma}
\label{PropertiesOfFunctionClass}
The function classes $\Phi_\Lambda^{\bar L}$ and the oscillation operator $\osc$ are subject to the following properties:
\begin{itemize}
\item[a)] Given any $b\in L^1(\mathbb{R}^N)$ with $\int_{\mathbb{R}^N} |b(x)| \,dx\leq 1$ and any $\psi \in \Phi_\Lambda^{\bar L}$, the convolution $b\ast \psi$ satisfies
\begin{align*}
b \ast \psi \in \Phi_\Lambda^{\bar L}.
\end{align*}
\item[b)] Given any $\psi \in \Phi_\Lambda^{\bar L}$ and any $\tau \geq 1$, the rescaled function
\begin{align*}
\psi_\tau(x):=\frac{1}{\tau}\psi(\tau x)
\end{align*}
satisfies $\psi_\tau \in \Phi_\Lambda^{\bar L}$.
\item[c)]
For $K\geq 2\sqrt{N}$, $\delta>0$, $\varepsilon>0$, $r>0$, and any function $\psi \in L^\infty_{loc}(\mathbb{R}^N;V)$ for any normed vector space $V$, the estimates
\begin{align*}
\osc_{\delta} \psi (x)
\leq
2(\mathcal{N}_{\varepsilon^2 \Id_N} \ast \osc_{K\varepsilon+\delta} \psi)(x)
\end{align*}
and
\begin{align*}
\osc_{\delta} \psi (x)
\leq
2(\mathcal{N}_{2\varepsilon^2 \Id_N} \ast \osc_{K\varepsilon+\delta} \psi)(x)
\end{align*}
hold.
\item[d)]
Let $\delta>0$ and let $h\in L^\infty_{loc}(\mathbb{R}^N)$ be any function subject to the properties
\begin{align*}
\int_{\mathbb{R}^N} |h|(x) ~ \mathcal{N}_\Lambda(x-x_0) \,dx \leq \delta
\end{align*}
for any $x_0\in \mathbb{R}^N$ and
\begin{align*}
\int_{\mathbb{R}^N} \osc_r h (x) ~ \mathcal{N}_\Lambda(x-x_0) \,dx \leq r
\end{align*}
for any $r\geq \delta$ and any $x_0\in \mathbb{R}^N$. We then have
\begin{align*}
\frac{1}{40N}
\mathcal{N}_{\delta^2 \Id_N} \ast h
\in \Phi_\Lambda^{\tilde L}
\end{align*}
for any $\tilde L\geq 4^N (|\Lambda^{1/2}|^N \delta^{-N}+1)$.
\end{itemize}
\end{lemma}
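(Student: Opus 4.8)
The plan is to verify, for each of the four functions, the two defining requirements of the class $\Phi_\Lambda^{\bar L}$: the pointwise gradient bound $|\nabla\phi|\le\bar L$ and the Gaussian-averaged oscillation bound \eqref{DefinitionClassPhi}. The common mechanism is that convolution commutes well with the operator $\osc_r$ (so that $\osc_r(k\ast u)\le |k|\ast\osc_r u$) and with Gaussian averaging, together with the identity $\mathcal{N}_A\ast\mathcal{N}_B=\mathcal{N}_{A+B}$ and the scaling relation $\mathcal{N}_\Lambda(v/\tau)=\tau^N\mathcal{N}_{\tau^2\Lambda}(v)$. For a) I would use $\nabla(b\ast\psi)=b\ast\nabla\psi$, so $|\nabla(b\ast\psi)|\le\|b\|_{L^1}\|\nabla\psi\|_{L^\infty}\le\bar L$ (smoothness of $b\ast\psi$ following from $\psi\in C^\infty$, after truncating $b$ to a compact set and passing to the limit), while from $|\psi(x+z-y)-\psi(x+z'-y)|\le\osc_r\psi(x-y)$ for $|z|,|z'|\le r$ one gets $\osc_r(b\ast\psi)\le|b|\ast\osc_r\psi$; integrating against $\mathcal{N}_\Lambda(\cdot-x_0)$, interchanging the order of integration, and applying \eqref{DefinitionClassPhi} with the base point recentred by $y$ gives $\le r\int|b|\le r$. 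For b) I would compute $\nabla\psi_\tau(x)=\nabla\psi(\tau x)$ and $\osc_r\psi_\tau(x)=\tfrac1\tau\osc_{\tau r}\psi(\tau x)$, substitute $u=\tau x$, use the scaling relation to rewrite the oscillation integral as $\tfrac1\tau\int\osc_{\tau r}\psi(u)\,\mathcal{N}_{\tau^2\Lambda}(u-\tau x_0)\,du$, and since $\tau\ge1$ write $\mathcal{N}_{\tau^2\Lambda}=\mathcal{N}_\Lambda\ast\mathcal{N}_{(\tau^2-1)\Lambda}$ so that \eqref{DefinitionClassPhi} (with recentred base point) bounds it by $\tfrac1\tau\cdot\tau r=r$. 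For c) the only point is that, by Chebyshev applied to $\mathbb{E}|W|^2=N\varepsilon^2$ (resp.\ $2N\varepsilon^2$), the Gaussian $\mathcal{N}_{\varepsilon^2\Id_N}$ (resp.\ $\mathcal{N}_{2\varepsilon^2\Id_N}$) puts at least half its mass on $\{|w|\le K\varepsilon\}$ when $K\ge2\sqrt N$; and since $B_\delta(x)\subset\{x-w+z:|z|\le K\varepsilon+\delta\}$ for such $w$, we have $\osc_{K\varepsilon+\delta}\psi(x-w)\ge\osc_\delta\psi(x)$, so restricting the convolution integral to $\{|w|\le K\varepsilon\}$ yields both inequalities with constant $2$.

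The substantial part is d). Setting $g:=\mathcal{N}_{\delta^2\Id_N}\ast h$, I would first note that the hypothesis $\int|h|\,\mathcal{N}_\Lambda(\cdot-x_0)\le\delta$ for all $x_0$ forces $\sup_{x_0}\int_{B_1(x_0)}|h|<\infty$, hence $g\in C^\infty$. Three things must then be checked for $\tfrac1{40N}g$. \emph{(i) For $r\ge\delta$}: exactly as in a), $\osc_r g\le\mathcal{N}_{\delta^2\Id_N}\ast\osc_r h$, and integrating against $\mathcal{N}_\Lambda(\cdot-x_0)$ and using the oscillation hypothesis on $h$ gives $\int\osc_r g\,\mathcal{N}_\Lambda(\cdot-x_0)\le r$. \emph{(ii) For $r<\delta$} this soft bound only gives $\le\delta$, which is too weak, so I would use smoothness of $g$: pointwise $|\nabla\mathcal{N}_{\delta^2\Id_N}(v)|=\tfrac{|v|}{\delta^2}\mathcal{N}_{\delta^2\Id_N}(v)\le\tfrac{C(N)}{\delta}\mathcal{N}_{2\delta^2\Id_N}(v)$ (the constant being the ratio of the normalising factors of the two Gaussians), hence, using $r<\delta$, $\osc_r g(x)\le\tfrac{C(N)r}{\delta}\big(\mathcal{N}_{4\delta^2\Id_N}\ast|h|\big)(x)$; integrating against $\mathcal{N}_\Lambda(\cdot-x_0)$, using $\mathcal{N}_{4\delta^2\Id_N}\ast\mathcal{N}_\Lambda=\mathcal{N}_{\Lambda+4\delta^2\Id_N}=\mathcal{N}_\Lambda\ast\mathcal{N}_{4\delta^2\Id_N}$, and then the first hypothesis on $h$ with recentred base point makes the two factors of $\delta$ cancel, leaving $\le C(N)r$. \emph{(iii) The gradient bound}: $|\nabla g(x)|\le\tfrac{C(N)}{\delta}\big(\mathcal{N}_{2\delta^2\Id_N}\ast|h|\big)(x)$, and dominating $\mathcal{N}_{2\delta^2\Id_N}\le\big(\tfrac{|\Lambda^{1/2}|^2}{2\delta^2}+1\big)^{N/2}\mathcal{N}_{\Lambda+2\delta^2\Id_N}$ (legitimate because $\Lambda+2\delta^2\Id_N\ge2\delta^2\Id_N$, so the exponential ratio is $\le1$, and $\det(\Lambda+2\delta^2\Id_N)\le(|\Lambda^{1/2}|^2+2\delta^2)^N$), then using $\mathcal{N}_{\Lambda+2\delta^2\Id_N}=\mathcal{N}_\Lambda\ast\mathcal{N}_{2\delta^2\Id_N}$ and the first hypothesis gives $|\nabla g(x)|\le C(N)\big(|\Lambda^{1/2}|^N\delta^{-N}+1\big)$, which is precisely what the stated lower bound $\tilde L\ge4^N(|\Lambda^{1/2}|^N\delta^{-N}+1)$ is tailored to absorb. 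Collecting the dimension-dependent constants from (i)--(iii) into the normalising factor then yields $\tfrac1{40N}g\in\Phi_\Lambda^{\tilde L}$.

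The hard part is the small-$r$ regime of d): the soft estimate $\osc_r g\le\mathcal{N}_{\delta^2\Id_N}\ast\osc_r h$ is lossy once $r\ll\delta$, so one must genuinely exploit the smoothness of $g$ and control $\nabla\mathcal{N}_{\delta^2\Id_N}$ pointwise by a slightly wider Gaussian, keeping careful track that the resulting factor $r/\delta$ is compensated \emph{exactly} by the $\delta$ extracted from the hypothesis, leaving no stray power of $\delta^{-1}$. The gradient estimate (iii) is the other delicate point, being the place where the unavoidable dimensional factor $\sim4^N|\Lambda^{1/2}|^N\delta^{-N}$ enters and dictates the lower bound required of $\tilde L$; parts a)--c), and the large-$r$ part of d), are comparatively routine manipulations of the two commutation properties above.
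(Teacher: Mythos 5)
Your proposal is essentially the paper's argument; parts a), b) and c) track the original proof step for step (convolution commutes with $\osc_r$ and with Gaussian averaging in a), scaling plus the convolution identity $\mathcal{N}_{\tau^2\Lambda}=\mathcal{N}_\Lambda\ast\mathcal{N}_{(\tau^2-1)\Lambda}$ in b), restriction to the half-mass ball via Chebyshev in c)). The one place where you take a slightly different route is the small-$r$ regime of d): the paper proves a standalone pointwise estimate on $\osc_r\mathcal{N}_{\delta^2\Lambda}$ in terms of a slightly wider Gaussian and then splits into three regimes ($r\ge\delta$, $\tfrac{1}{2N}\delta\le r<\delta$, $r<\tfrac{1}{2N}\delta$), whereas you bound $|\nabla\mathcal{N}_{\delta^2\Id_N}|$ by $\tfrac{C(N)}{\delta}\mathcal{N}_{2\delta^2\Id_N}$, absorb the shift $|y|\le r\le\delta$ by widening once more to $\mathcal{N}_{4\delta^2\Id_N}$, and thereby collapse the two small-$r$ regimes into one. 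The mechanism (dominate a translated/differentiated Gaussian by a wider Gaussian at the cost of a dimensional constant, then let the resulting $\delta$ from the hypothesis cancel the $1/\delta$ from differentiation) is identical; your variant avoids the intermediate regime at the price of tracking a slightly different constant, which is why the paper ends up with the explicit $\tfrac{1}{40N}$. Your treatment of the Lipschitz bound (iii), dominating $\mathcal{N}_{2\delta^2\Id_N}$ by $(\tfrac{|\Lambda^{1/2}|^2}{2\delta^2}+1)^{N/2}\mathcal{N}_{\Lambda+2\delta^2\Id_N}$ and extracting $\delta$ from the hypothesis, is likewise the same computation as in the paper (there phrased via $\mathcal{N}_{2\delta^2\Id_N}\le(\tfrac{\sqrt2\delta+|\Lambda^{1/2}|}{\sqrt2\delta})^N\mathcal{N}_{2\delta^2\Id_N+\Lambda}$). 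In short, a correct proof with the same structure and key moves; the only genuine variation is a cosmetic simplification of the case split in d).
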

\begin{proof}
To establish d), we choose $\tau:=2/N$ and compute for $r\geq \delta$
\begin{align*}
&\int_{\mathbb{R}^N} \osc_r (\mathcal{N}_{\delta^2 \Id_N} \ast h)(x) \mathcal{N}_\Lambda(x-x_0) \,dx
\\&
=\int_{\mathbb{R}^N} \osc_r \bigg(\int_{\mathbb{R}^N} \mathcal{N}_{\delta^2 \Id_N}(w) h(\cdot-w) \,dw \bigg)(x) \mathcal{N}_\Lambda(x-x_0) \,dx
\\&
\leq \int_{\mathbb{R}^N} \int_{\mathbb{R}^N} \mathcal{N}_{\delta^2 \Id_N}(w) \osc_r h(x-w) \,dw ~ \mathcal{N}_\Lambda(x-x_0) \,dx
\\&
= \int_{\mathbb{R}^N} \mathcal{N}_{\delta^2 \Id_N}(w)  \int_{\mathbb{R}^N} \osc_r h(x) \mathcal{N}_\Lambda(x+w-x_0) \,dx \,dw
\\&
\leq \int_{\mathbb{R}^N} \mathcal{N}_{\delta^2 \Id_N}(w) r \,dw
\\&
\leq r.
\end{align*}
For $r\in [\frac{1}{2N}\delta,\delta]$, upon replacing $r$ by $\delta$ we obtain
\begin{align*}
\int_{\mathbb{R}^N} \osc_r (\mathcal{N}_{\delta^2 \Id_N} \ast h)(x) \mathcal{N}_\Lambda(x-x_0) \,dx
\leq \delta \leq 2N r.
\end{align*}
In contrast, for $r\leq \frac{1}{2N} \delta$ we estimate
\begin{align*}
&\int_{\mathbb{R}^N} \osc_r (\mathcal{N}_{\delta^2 \Id_N} \ast h)(x) \mathcal{N}_\Lambda(x-x_0) \,dx
\\&
=\int_{\mathbb{R}^N} \osc_r \bigg(\int_{\mathbb{R}^N} \mathcal{N}_{\delta^2 \Id_N}(\cdot-w) h(w) \,dw \bigg)(x) \mathcal{N}_\Lambda(x-x_0) \,dx
\\&
\leq \int_{\mathbb{R}^N} \int_{\mathbb{R}^N} \osc_r\mathcal{N}_{\delta^2 \Id_N}(x-w) |h(w)| \,dw ~ \mathcal{N}_\Lambda(x-x_0) \,dx
\\&
\stackrel{\eqref{OscBoundGaussian}}{\leq} \int_{\mathbb{R}^N} \int_{\mathbb{R}^N} \frac{r}{\delta} \, 40\sqrt{N} \mathcal{N}_{(1+\tau)\delta^2 \Id_N}(x-w) |h(w)| \,dw ~ \mathcal{N}_\Lambda(x-x_0) \,dx
\\&
= \frac{r}{\delta} \, 40\sqrt{N} \int_{\mathbb{R}^N} \mathcal{N}_{(1+\tau)\delta^2 \Id_N}(w) \int_{\mathbb{R}^N} |h(x-w)| ~ \mathcal{N}_\Lambda(x-x_0) \,dx \,dw
\\&
\leq \frac{r}{\delta} \, 40\sqrt{N} \int_{\mathbb{R}^N} \mathcal{N}_{(1+\tau)\delta^2 \Id_N}(w) \delta \,dw
\\&
\leq 40\sqrt{N} \,r.
\end{align*}
To complete the proof of d), it only remains to show the estimate on the Lipschitz constant of $\mathcal{N}_{\delta^2 \Id_N} \ast h$. To do so, we estimate
\begin{align*}
&|\nabla (\mathcal{N}_{\delta^2 \Id_N} \ast h)(x)|
\\
&\leq
\int_{\mathbb{R}^N} |\nabla \mathcal{N}_{\delta^2 \Id_N}(x-w)| |h(w)| \,dw
\\&
=
\int_{\mathbb{R}^N} |\nabla \mathcal{N}_{\delta^2 \Id_N}(w)| |h(x-w)| \,dw
\\&
\leq
\int_{\mathbb{R}^N} |\delta^{-2} w| \, \mathcal{N}_{\delta^2 \Id_N}(w) \, |h(x-w)| \,dw
\\&
\leq \sqrt{2}^N 
\int_{\mathbb{R}^N} |\delta^{-2} w| \exp\bigg(-\frac{1}{4}\delta^{-2} |w|^2\bigg) \, \mathcal{N}_{2\delta^2 \Id_N}(w) \, |h(x-w)| \,dw
\\&
\leq \sqrt{2}^N \delta^{-1}
\int_{\mathbb{R}^N} \mathcal{N}_{2\delta^2 \Id_N}(w) \, |h(x-w)| \,dw
\\&
\leq \sqrt{2}^N \delta^{-1} \bigg(\frac{\sqrt{2}\delta+|\Lambda^{1/2}|}{\sqrt{2}\delta} \bigg)^N
\int_{\mathbb{R}^N} \mathcal{N}_{2\delta^2 \Id_N+\Lambda}(w) \, |h(x-w)| \,dw
\\&
\leq \sqrt{2}^N \delta^{-1} \bigg(\frac{\sqrt{2}\delta+|\Lambda^{1/2}|}{\sqrt{2}\delta} \bigg)^N
\int_{\mathbb{R}^N} \int_{\mathbb{R}^N} \mathcal{N}_{2\delta\Id_N}(z) \mathcal{N}_{\Lambda}(w-z) \, |h(x-w)| \,dw \,dz
\\&
\leq \sqrt{2}^N \delta^{-1} \bigg(\frac{\sqrt{2}\delta+|\Lambda^{1/2}|}{\sqrt{2}\delta} \bigg)^N
\int_{\mathbb{R}^N} \delta \mathcal{N}_{2\delta\Id_N}(z) \,dz
\\&
\leq \sqrt{2}^N \bigg(\frac{\sqrt{2}\delta+|\Lambda^{1/2}|}{\sqrt{2}\delta} \bigg)^N.
\end{align*}
This establishes assertion d).

Assertion a) is a direct consequence of the estimates
\begin{align*}
|\nabla (b\ast \psi)(x)| = \bigg| \int_{\mathbb{R}^N} b(w) \nabla \psi(x-w) \,dw \bigg|
\leq \int_{\mathbb{R}^N} |b(w)| \bar L \,dw \leq \bar L
\end{align*}
and
\begin{align*}
&\int_{\mathbb{R}^N} \osc_r (b\ast \psi)(x) \mathcal{N}_\Lambda(x-x_0) \,dx
\\&
=\int_{\mathbb{R}^N} \osc_r \bigg(\int_{\mathbb{R}^N} b(w) \psi(\cdot-w) \,dw\bigg)(x) \mathcal{N}_\Lambda(x-x_0) \,dx
\\&
\leq \int_{\mathbb{R}^N} \int_{\mathbb{R}^N} |b(w)| \osc_r \psi(x-w) \,dw ~ \mathcal{N}_\Lambda(x-x_0) \,dx
\\&
=\int_{\mathbb{R}^N} \int_{\mathbb{R}^N}  \osc_r \psi(x) \, \mathcal{N}_\Lambda(x+w-x_0) \,dx ~|b(w)| \,dw
\\&
\leq \int_{\mathbb{R}^N} r |b(w)| \,dw
\\&
\leq r.
\end{align*}

Concerning b), we directly verify $|\nabla \psi_\tau(x)|=|\nabla \psi(\tau x)|\leq \bar L$. On the other hand, we have for any $r>0$ using the convolution property $\mathcal{N}_{\tau^2 \Lambda}=\mathcal{N}_{\Lambda} \ast \mathcal{N}_{(\tau^2-1) \Lambda}$
\begin{align*}
&\int_{\mathbb{R}^N} \osc_r \psi_\tau(x) \mathcal{N}_\Lambda(x-x_0) \,dx
\\&
=\int_{\mathbb{R}^N} \frac{1}{\tau} \osc_{\tau r} \psi(\tau x) \mathcal{N}_\Lambda(x-x_0) \,dx
\\&
=\int_{\mathbb{R}^N} \frac{1}{\tau} \osc_{\tau r} \psi(\tilde x) \mathcal{N}_{\tau^2 \Lambda}(\tilde x-\tau x_0) \,d\tilde x
\\&
=\frac{1}{\tau} \int_{\mathbb{R}^N}\int_{\mathbb{R}^N} \osc_{\tau r} \psi(\tilde x) \mathcal{N}_{(\tau^2-1) \Lambda} (w) \mathcal{N}_\Lambda (\tilde x-w-\tau x_0) \,d\tilde x \,dw
\\&
\leq \frac{1}{\tau} \int_{\mathbb{R}^N} \tau r \, \mathcal{N}_{(\tau^2-1) \Lambda} (w) \,dw
\\&
\leq r.
\end{align*}
This establishes b).

Regarding c), we have by $\osc_\delta\psi(x)\leq \osc_{K\varepsilon+\delta}\psi(x-z)$ for any $|z|\leq K\varepsilon$
\begin{align*}
&\osc_{\delta} \psi (x)
\\&
\leq \frac{1}{\int_{\{|z|\leq K \varepsilon\}} \mathcal{N}_{\varepsilon^2 \Id_N}(z) \,dz} \int_{\mathbb{R}^N} \mathcal{N}_{\varepsilon^2 \Id_N}(z) \osc_{K\varepsilon+\delta}\psi(x-z) \,dz
\\&
=
\frac{1}{\int_{\{|z|\leq K\}} \mathcal{N}_{\Id_N}(z) \,dz} 
(\mathcal{N}_{\varepsilon^2\Id_N} \ast \osc_{K\varepsilon+\delta}\psi)(x)
\end{align*}
which for $K\geq \sqrt{2N}$ entails the desired bounds (to obtain the second one, we replace $\varepsilon$ by $\sqrt{2} \varepsilon$) using
\begin{align}
\label{MassGaussianInBall}
\int_{\{|z|\leq \sqrt{2N}\}} \mathcal{N}_{\Id_N}(z) \,dz
\geq \frac{1}{2}.
\end{align}
The latter estimate follows from $\int_{\mathbb{R}^d} \mathcal{N}_{\Id_N}(z) \,dz=1$ and
\begin{align*}
\int_{\{|z|\geq r\}} \mathcal{N}_{\Id_N}(z) \,dz
\leq
\frac{1}{r^2}
\int_{\{|z|\geq r\}} |z|^2 \mathcal{N}_{\Id_N}(z) \,dz
\leq
\frac{N}{r^2}.
\end{align*}
\end{proof}

In the following lemma, we collect a number of standard properties of Gaussians.
\begin{lemma}
Let $\Lambda\in \mathbb{R}^{N\times N}$  be a symmetric positive definite matrix.

For any symmetric positive definite matrix $\tilde \Lambda\in \mathbb{R}^{N\times N}$, we have
\begin{align}
\label{ConvolutionGaussian}
\mathcal{N}_\Lambda \ast \mathcal{N}_{\tilde \Lambda} = \mathcal{N}_{\Lambda+\tilde\Lambda}.
\end{align}

For any $0<\tau\leq 1$ the second and the third derivative of the Gaussian $\mathcal{N}_\Lambda$ satisfy the bounds
\begin{align}
\label{BoundGaussianSecondDerivative}
\frac{|\nabla^2 \mathcal{N}_\Lambda(z)|}{\mathcal{N}_{(1+\tau)\Lambda}(z)}
&\leq 3 (1+\tau)^{(N+2)/2} \tau^{-1} |\Lambda^{-1}|
\end{align}
and
\begin{align}
\label{BoundGaussianThirdDerivative}
\frac{|\nabla^3 \mathcal{N}_\Lambda(z)|}{\mathcal{N}_{(1+\tau)\Lambda}(z)}
&\leq 5 (1+\tau)^{(N+3)/2} \tau^{-3/2} |\Lambda^{-1/2}|^3
\end{align}
for all $z\in \mathbb{R}^N$.

For any $0\leq s\leq 1$ we have the multiplication property
\begin{align}
\label{MultiplyGaussians}
&\mathcal{N}_{\Lambda}(z)
\mathcal{N}_{\Lambda}(x)
=
\mathcal{N}_{\Lambda}(\sqrt{1-s}z+\sqrt{s}x)
\mathcal{N}_{\Lambda}(\sqrt{1-s}x-\sqrt{s}z)
\end{align}
for all $x,z\in \mathbb{R}^{N}$.

Let $0<\tau\leq 1$ and $r\leq \frac{1}{4} \tau \delta |\Lambda^{-1}|^{-1/2}$. Then the Gaussian $\mathcal{N}_{\delta^2 \Lambda}$ satisfies the estimate
\begin{align}
\label{OscBoundGaussianGeneral}
\osc_r \mathcal{N}_{\delta^2 \Lambda}(z)
\leq \frac{r}{\delta} \times \frac{20}{\tau^{1/2}} (1+\tau)^{N/2} |\Lambda^{-1/2}| \mathcal{N}_{(1+\tau) \delta^2 \Lambda}(z).
\end{align}
In particular, for $\tau:=2/N$ and $r\leq \frac{1}{2N} \delta |\Lambda^{-1}|^{-1/2}$ we have
\begin{align}
\label{OscBoundGaussian}
\osc_r \mathcal{N}_{\delta^2 \Lambda}(z)
\leq \frac{r}{\delta} \times 40 \sqrt{N} |\Lambda^{-1/2}| \mathcal{N}_{(1+\tau) \delta^2 \Lambda}(z).
\end{align}
\end{lemma}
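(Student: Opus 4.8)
The plan is to establish the four groups of assertions one after another, each by a short direct computation on the explicit density $\mathcal{N}_\Lambda(z)=(2\pi)^{-N/2}(\det\Lambda)^{-1/2}\exp(-\tfrac12\Lambda^{-1}z\cdot z)$. For the convolution identity \eqref{ConvolutionGaussian} I would complete the square in the exponent of $\int_{\mathbb{R}^N}\mathcal{N}_\Lambda(z-y)\mathcal{N}_{\tilde\Lambda}(y)\,dy$; equivalently, one recalls that the Fourier transform of $\mathcal{N}_\Lambda$ is $\xi\mapsto\exp(-\tfrac12\Lambda\xi\cdot\xi)$ and that convolution turns into a product of Fourier transforms. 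For the multiplication identity \eqref{MultiplyGaussians} I would simply expand both sides: the normalizing prefactors agree, and a one-line computation shows $\Lambda^{-1}(\sqrt{1-s}z+\sqrt s x)\cdot(\sqrt{1-s}z+\sqrt s x)+\Lambda^{-1}(\sqrt{1-s}x-\sqrt s z)\cdot(\sqrt{1-s}x-\sqrt s z)=\Lambda^{-1}z\cdot z+\Lambda^{-1}x\cdot x$ (the cross terms cancel), so the two exponents coincide; this reflects the fact that $(z,x)\mapsto(\sqrt{1-s}z+\sqrt s x,\sqrt{1-s}x-\sqrt s z)$ is orthogonal on $\mathbb{R}^{2N}$ equipped with the inner product twisted by $\Lambda^{-1}$.

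For the derivative bounds \eqref{BoundGaussianSecondDerivative} and \eqref{BoundGaussianThirdDerivative} I would differentiate directly: $\nabla\mathcal{N}_\Lambda=-(\Lambda^{-1}z)\,\mathcal{N}_\Lambda$, hence $\nabla^2\mathcal{N}_\Lambda=(\Lambda^{-1}z\otimes\Lambda^{-1}z-\Lambda^{-1})\,\mathcal{N}_\Lambda$ and $\nabla^3\mathcal{N}_\Lambda=\big(-\Lambda^{-1}z\otimes\Lambda^{-1}z\otimes\Lambda^{-1}z+\mathrm{Sym}(\Lambda^{-1}\otimes\Lambda^{-1}z)\big)\,\mathcal{N}_\Lambda$, where the symmetrized term is a sum of three tensors. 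Taking operator/tensor norms gives $|\nabla^2\mathcal{N}_\Lambda|\le(|\Lambda^{-1}z|^2+|\Lambda^{-1}|)\mathcal{N}_\Lambda$ and $|\nabla^3\mathcal{N}_\Lambda|\le(|\Lambda^{-1}z|^3+3|\Lambda^{-1}|\,|\Lambda^{-1}z|)\mathcal{N}_\Lambda$. Next I would use $|\Lambda^{-1}z|\le|\Lambda^{-1/2}|\,(\Lambda^{-1}z\cdot z)^{1/2}$ and $|\Lambda^{-1}|=|\Lambda^{-1/2}|^2$, abbreviate $t:=\Lambda^{-1}z\cdot z\ge0$, and divide by $\mathcal{N}_{(1+\tau)\Lambda}$ using the identity $\mathcal{N}_\Lambda(z)/\mathcal{N}_{(1+\tau)\Lambda}(z)=(1+\tau)^{N/2}\exp(-\tfrac{\tau}{2(1+\tau)}t)$. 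The problem then reduces to the elementary one-variable bounds $\sup_{t\ge0}(t+1)e^{-at}\le a^{-1}$ and $\sup_{t\ge0}t^{1/2}(t+3)e^{-at}\le C a^{-3/2}$ with $a=\tfrac{\tau}{2(1+\tau)}\le\tfrac14$; plugging these in and tracking the powers of $(1+\tau)$ yields \eqref{BoundGaussianSecondDerivative} and \eqref{BoundGaussianThirdDerivative}, the stated constants $3$ and $5$ being comfortably large enough.

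For the oscillation bound I would start from the elementary inequality $\osc_r g(z)\le 2r\sup_{|w|\le r}|\nabla g(z+w)|$ (integrate $\nabla g$ along the segment joining two near points, which stays in $\bar B_r(z)$), applied to $g=\mathcal{N}_{\delta^2\Lambda}$. Applying the gradient estimate from the previous paragraph — with $\Lambda$ replaced by $\delta^2\Lambda$ and $\tau$ replaced by $\tau/2$ — gives $|\nabla\mathcal{N}_{\delta^2\Lambda}(y)|\le C\,\delta^{-1}|\Lambda^{-1/2}|\,\tau^{-1/2}(1+\tau/2)^{N/2}\,\mathcal{N}_{(1+\tau/2)\delta^2\Lambda}(y)$. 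It remains to pass from $\mathcal{N}_{(1+\tau/2)\delta^2\Lambda}(z+w)$ with $|w|\le r$ to $\mathcal{N}_{(1+\tau)\delta^2\Lambda}(z)$. For this I would apply Young's inequality $\Lambda^{-1}(z+w)\cdot(z+w)\ge(1-\eta)\Lambda^{-1}z\cdot z-\eta^{-1}\Lambda^{-1}w\cdot w$ with the specific choice $\eta=\tfrac{\tau}{2(1+\tau)}$, which is exactly the value making $\tfrac{1-\eta}{1+\tau/2}=\tfrac1{1+\tau}$; the $z$-dependent parts of the two Gaussian exponents then cancel identically, the remaining contribution $\eta^{-1}\Lambda^{-1}w\cdot w/(2(1+\tau/2)\delta^2)$ is bounded by a universal constant thanks to the hypothesis $r\le\tfrac14\tau\delta|\Lambda^{-1}|^{-1/2}$, and the prefactor mismatch $((1+\tau)/(1+\tau/2))^{N/2}$ combines with $(1+\tau/2)^{N/2}$ to produce precisely $(1+\tau)^{N/2}$. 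Multiplying these two estimates gives \eqref{OscBoundGaussianGeneral}, and \eqref{OscBoundGaussian} follows by specializing to $\tau=2/N$ and using $(1+2/N)^{N/2}\le e$.

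None of these steps is deep; the only point that needs genuine care is the variance shift in the last paragraph, where the Young parameter $\eta$ must be tuned to the exact value $\tfrac{\tau}{2(1+\tau)}$ so that the weight appearing on the right-hand side is $\mathcal{N}_{(1+\tau)\delta^2\Lambda}$ rather than a Gaussian of strictly smaller variance, and where the smallness assumption on $r$ is precisely what keeps the resulting error factor bounded. The rest is bookkeeping of numerical constants and of the $(1+\tau)^{N/2}$ powers, which I would carry out without aiming for optimality.
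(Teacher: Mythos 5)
Your treatment of \eqref{ConvolutionGaussian}, \eqref{MultiplyGaussians}, \eqref{BoundGaussianSecondDerivative}, and \eqref{BoundGaussianThirdDerivative} matches the paper's argument almost line for line: explicit differentiation of the density, division by the dilated Gaussian $\mathcal{N}_{(1+\tau)\Lambda}$ via $\mathcal{N}_\Lambda(z)/\mathcal{N}_{(1+\tau)\Lambda}(z)=(1+\tau)^{N/2}\exp\bigl(-\tfrac{\tau}{2(1+\tau)}\Lambda^{-1}z\cdot z\bigr)$, and reduction to elementary scalar maximizations of $(t+1)e^{-at}$ and $(t^{3/2}+3t^{1/2})e^{-at}$ with $a=\tfrac{\tau}{2(1+\tau)}$. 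The oscillation bound \eqref{OscBoundGaussianGeneral} is where you diverge genuinely from the paper. The paper estimates $\osc_r\mathcal{N}_{\delta^2\Lambda}$ directly from the explicit form of the Gaussian exponent, using the scalar inequality $|e^{-a}-e^{-(a+b)}|\le b\,e^{-a}$ (valid for $a,b\ge 0$) to compare the two nearby exponents; this produces the linear-in-$r$ prefactor and the cross/error terms $\delta^{-2}|\Lambda^{-1/2}z||\Lambda^{-1/2}|r$, $\delta^{-2}|\Lambda^{-1}|r^2$ in one shot, which are then absorbed into the widening $\Lambda\to(1+\tau)\Lambda$. You instead use the mean-value bound $\osc_r g(z)\le 2r\sup_{|w|\le r}|\nabla g(z+w)|$, control $|\nabla\mathcal{N}_{\delta^2\Lambda}|$ by a $(1+\tau/2)$-dilated Gaussian (reusing exactly the gradient machinery from the earlier derivative estimates), and then do a separate variance shift from $\mathcal{N}_{(1+\tau/2)\delta^2\Lambda}(z+w)$ to $\mathcal{N}_{(1+\tau)\delta^2\Lambda}(z)$ via Young's inequality with the tuned parameter $\eta=\tfrac{\tau}{2(1+\tau)}$. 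Both routes use the hypothesis $r\le\tfrac14\tau\delta|\Lambda^{-1}|^{-1/2}$ at precisely the same point — to keep the exponential error $O(1)$ — and both close with constants comfortably under $20\tau^{-1/2}(1+\tau)^{N/2}$. Your version is slightly more modular and reuses earlier estimates, at the small cost of splitting the widening budget $\tau\to\tau/2+\tau/2$; the paper's is a bit more direct. Both are correct.
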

\begin{proof}
The convolution property of Gaussians \eqref{ConvolutionGaussian} is classical.
Equation \eqref{MultiplyGaussians} is immediate from the definition
\begin{align*}
\mathcal{N}_\Lambda(z):=\frac{1}{(2\pi)^{N/2} \sqrt{\det \Lambda}}
\exp\bigg(-\frac{1}{2}\Lambda^{-1} z \cdot z\bigg).
\end{align*}

The estimate \eqref{BoundGaussianSecondDerivative} follows from
\begin{align*}
\nabla^2 \mathcal{N}_\Lambda (z) =
\Big(\Lambda^{-1} + \Lambda^{-1} z \otimes \Lambda^{-1} z \Big) \mathcal{N}_\Lambda (z)
\end{align*}
and
\begin{align}
\label{BoundOnGaussianByWiderGaussian}
\frac{\mathcal{N}_{\Lambda}(z)}{\mathcal{N}_{(1+\tau)\Lambda}(z)}
\leq (1+\tau)^{N/2} \exp\bigg(-\frac{1}{2}\frac{\tau}{1+\tau} \Lambda^{-1} z \cdot z\bigg)
\end{align}
as well as the bound for all $b\geq 0$
\begin{align*}
b^2 \exp\bigg(-\frac{1}{2} \frac{\tau}{1+\tau} b^2 \bigg)
\leq \frac{1+\tau}{\tau} \cdot \frac{2}{e}.
\end{align*}
Similarly, we deduce \eqref{BoundGaussianThirdDerivative} by applying \eqref{BoundOnGaussianByWiderGaussian} to
\begin{align*}
\nabla^3 \mathcal{N}_\Lambda (z) =
\Big(3\big(\Lambda^{-1} \otimes \Lambda^{-1} z\big)_{sym} + \Lambda^{-1} z \otimes \Lambda^{-1} z \otimes \Lambda^{-1} z \Big) \mathcal{N}_\Lambda (z)
\end{align*}
and using the estimate (valid for all $b\geq 0$)
\begin{align*}
(3 b + b^3) \exp\bigg(-\frac{1}{2} \frac{\tau}{1+\tau} b^2 \bigg)
&\leq 3\frac{(1+\tau)^{1/2}}{\tau^{1/2}} \cdot \frac{1}{\sqrt{e}}+\frac{(1+\tau)^{3/2}}{\tau^{3/2}} \cdot \frac{3\cdot \sqrt{3}}{e^{3/2}}
\\&
\leq 5\frac{(1+\tau)^{3/2}}{\tau^{3/2}}.
\end{align*}

Finally, we turn to the proof of \eqref{OscBoundGaussianGeneral}, from which \eqref{OscBoundGaussian} follows as an immediate consequence.
By the estimate
\begin{align*}
|\exp(-a)-\exp(-(a+b))|\leq b \exp(-a)
\end{align*}
(valid for any $a,b\geq 0$) we deduce for $\tau\leq 1$ the bound
\begin{align*}
&\osc_r \mathcal{N}_{\delta^2 \Lambda}(z)
\\&
\leq \frac{1}{(2\pi)^{N/2} \sqrt{\det \delta^2\Lambda}}
\big(2\delta^{-2} |\Lambda^{-1/2} z| |\Lambda^{-1/2}| r + \delta^{-2} |\Lambda^{-1}| r^2\big)
\\&~~~~
\times
\exp\bigg(-\frac{1}{2} \delta^{-2} \Lambda^{-1} z\cdot z + \delta^{-2} |\Lambda^{-1/2} z| |\Lambda^{-1/2}|  r + \frac{1}{2} \delta^{-2} |\Lambda^{-1}| r^2 \bigg)
\\&
\leq \delta^{-2} (1+\tau)^{N/2} \mathcal{N}_{(1+\tau) \delta^2 \Lambda}(z)
~\sup_x\Bigg[
\big(2|\Lambda^{-1/2} x| |\Lambda^{-1/2}|  r + |\Lambda^{-1}| r^2\big)
\\&~~~~~~~~~~~~
\times
\exp\bigg(-\frac{1}{2} \frac{\tau}{1+\tau} \delta^{-2} |\Lambda^{-1/2} x|^2 + \delta^{-2} |\Lambda^{-1/2} x| |\Lambda^{-1/2}|  r + \frac{1}{2} \delta^{-2} |\Lambda^{-1}| r^2 \bigg)\Bigg]
\\&
\leq \delta^{-2} (1+\tau)^{N/2} \mathcal{N}_{(1+\tau) \delta^2 \Lambda}(z)
~\sup_x \Bigg[
\big(2|\Lambda^{-1/2} x| |\Lambda^{-1/2}| r + |\Lambda^{-1}| r^2\big)
\\&~~~~~~~~~~~~~~~~~~~~~~~~~~~~~~
\times
\exp\bigg(-\frac{\tau}{8} \delta^{-2} |\Lambda^{-1/2} x|^2 \bigg) \times \exp \bigg( \frac{1+4\tau^{-1}}{2} \delta^{-2} |\Lambda^{-1}| r^2 \bigg)\Bigg]
\end{align*}
which for $r\leq \frac{1}{4} \tau \delta |\Lambda^{-1}|^{-1/2}$ entails
\begin{align*}
&\osc_r \mathcal{N}_{\delta^2 \Lambda}(z)
\\&
\leq \delta^{-2} (1+\tau)^{N/2} \mathcal{N}_{(1+\tau) \delta^2 \Lambda}(z)  \bigg(2|\Lambda^{-1/2}|r \sup_{b\geq 0} b \exp\Big(-\frac{\tau}{8} \delta^{-2} b^2\Big) + \delta |\Lambda^{-1/2}|r\bigg) \times 2
\\&
\leq \delta^{-2} (1+\tau)^{N/2} \mathcal{N}_{(1+\tau) \delta^2 \Lambda}(z)  \big(8 \delta \tau^{-1/2}|\Lambda^{-1/2}|r + \delta |\Lambda^{-1/2}|r\big) \times 2.
\end{align*}
As a consequence, we deduce \eqref{OscBoundGaussianGeneral}.
\end{proof}

\section{Concentration inequalities for independent random variables}

A concentration estimate for the sum of bounded independent random variables is provided by Bennett's inequality.
\begin{lemma}[Bennett's inequality]
\label{Bennett}
Let $X_1,\ldots,X_M$ be independent random variables with $\mathbb{E}[X_i]=0$ for all $1\leq i\leq M$. Suppose that for some $A>0$ the $X_i$ satisfy the uniform bound $|X_i|\leq A$ almost surely. Defining
\begin{align*}
\sigma^2 := \sum_{i=1}^M \Var X_i,
\end{align*}
and using the notation $h(x):=(1+x) \log (1+x) -x$, we have for any $r\geq 0$ the estimate
\begin{align*}
\mathbb{P}\bigg[\sum_{i=1}^M X_i \geq r\bigg]
\leq
\exp\bigg(-\frac{\sigma^2}{A^2} h\bigg(\frac{Ar}{\sigma^2}\bigg)\bigg).
\end{align*}
In particular, we have
\begin{align*}
\mathbb{P}\bigg[\sum_{i=1}^M X_i \geq r\bigg]
\leq
\exp\bigg(-\min\bigg\{\frac{r^2}{3\sigma^2},\frac{r}{3A}\bigg\} \bigg).
\end{align*}
\end{lemma}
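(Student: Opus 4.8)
The plan is to run the standard Chernoff (exponential moment) argument. First I would fix an arbitrary parameter $t>0$ and apply Markov's inequality to the nonnegative random variable $\exp\!\big(t\sum_{i=1}^M X_i\big)$, using independence of the $X_i$ to factorize the expectation:
\begin{align*}
\mathbb{P}\bigg[\sum_{i=1}^M X_i \geq r\bigg]
\leq e^{-tr}\,\mathbb{E}\bigg[\exp\bigg(t\sum_{i=1}^M X_i\bigg)\bigg]
= e^{-tr}\prod_{i=1}^M \mathbb{E}\big[e^{tX_i}\big].
\end{align*}
It then remains to bound each moment generating function $\mathbb{E}[e^{tX_i}]$ and, afterwards, to optimize over $t$.

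For the moment generating functions, the key input is the elementary fact that $y\mapsto (e^{y}-1-y)/y^{2}$ (extended continuously by $1/2$ at $y=0$) is nondecreasing on $\mathbb{R}$. Evaluating it at $y=tx$ and at $y=tA$ for $x\leq A$ and $t>0$ gives the pointwise bound $e^{tx}\leq 1+tx+\frac{x^{2}}{A^{2}}(e^{tA}-1-tA)$ valid for all $x\leq A$. Since $|X_i|\leq A$ almost surely and $\mathbb{E}[X_i]=0$, taking expectations and using $1+z\leq e^{z}$ yields $\mathbb{E}[e^{tX_i}]\leq \exp\!\big(\frac{\Var X_i}{A^{2}}(e^{tA}-1-tA)\big)$. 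Multiplying over $i=1,\dots,M$ and recalling $\sigma^{2}=\sum_i\Var X_i$, I obtain for every $t>0$
\begin{align*}
\mathbb{P}\bigg[\sum_{i=1}^M X_i \geq r\bigg]
\leq \exp\bigg(-tr+\frac{\sigma^{2}}{A^{2}}\big(e^{tA}-1-tA\big)\bigg).
\end{align*}

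Finally I would optimize the exponent in $t$. The exponent is convex in $t$, and its minimizer solves $e^{tA}=1+\frac{Ar}{\sigma^{2}}$, i.e.\ $t=\frac{1}{A}\log\!\big(1+\frac{Ar}{\sigma^{2}}\big)$; substituting this value and writing $u:=\frac{Ar}{\sigma^{2}}$, the exponent collapses to $-\frac{\sigma^{2}}{A^{2}}\big[(1+u)\log(1+u)-u\big]=-\frac{\sigma^{2}}{A^{2}}h\!\big(\frac{Ar}{\sigma^{2}}\big)$, which is precisely the first claimed estimate. For the ``in particular'' bound I would invoke the elementary inequality $h(u)\geq \frac{u^{2}}{2(1+u/3)}\geq \frac13\min\{u^{2},u\}$ for $u\geq 0$ (treating the ranges $u\leq 1$ and $u\geq 1$ separately) and then simplify $\frac{\sigma^{2}}{A^{2}}\min\{u^{2},u\}=\min\{\frac{r^{2}}{\sigma^{2}},\frac{r}{A}\}$. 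This is a textbook argument with no genuine obstacle: the only points requiring (routine) care are the monotonicity lemma entering the moment generating function estimate — which is where the hypotheses $|X_i|\leq A$ and $\mathbb{E}[X_i]=0$ are used — and the elementary lower bound on $h$ in the last step.
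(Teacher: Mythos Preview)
Your proof is correct and follows the standard textbook route; the paper itself simply cites Bennett's original article for the first inequality and then derives the second one by the same case split $Ar/\sigma^{2}\lessgtr 1$ (equivalently $r\lessgtr \sigma^{2}/A$) that you carry out via the lower bound on $h$. In other words, you have supplied precisely the details the paper chose to omit, and the ``in particular'' step matches the paper's argument.
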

\begin{proof}
For the proof of Bennett's inequality -- the first inequality of the lemma -- see \cite{Bennett}.
The second inequality follows by distinguishing the cases $r\leq \frac{\sigma^2}{A}$, in which Bennett's inequality gives
\begin{align*}
\mathbb{P}\bigg[\sum_{i=1}^M X_i \geq r\bigg]
\leq
\exp\bigg(-\frac{r^2}{3\sigma^2} \bigg),
\end{align*}
and $r\geq \frac{\sigma^2}{A}$, in which case we deduce
\begin{align*}
\mathbb{P}\bigg[\sum_{i=1}^M X_i \geq r\bigg]
\leq
\exp\bigg(-\frac{r}{3A}\bigg).
\end{align*}
\end{proof}
As a corollary of Bennett's inequality, we deduce the following concentration inequality for random variables with stretched exponential moments.
\begin{lemma}
\label{iidSumTailBound}
Let $X_1,\ldots,X_M$ be independent random variables with zero mean and uniformly bounded stretched exponential moments
\begin{align*}
\mathbb{E}\bigg[\exp\bigg(\frac{|X_i|^{\gamma_0}}{b^{\gamma_0}}\bigg)\bigg]\leq 2
\end{align*}
for some $\gamma_0>0$ and some $b>0$. The sum
\begin{align*}
X:=\sum_{i=1}^M X_i
\end{align*}
then satisfies for any $V\geq \Var X$ the estimate
\begin{align*}
\mathbb{P}\bigg[\bigg|\sum_{i=1}^M X_i\bigg|\geq r\bigg]
\leq
3\exp\bigg(-\frac{r^2}{10V} \bigg)
\end{align*}
for any
\begin{align}
\label{ConditionOnr}
r\leq \sqrt{V} \min\bigg\{\frac{\sqrt{V}}{b (2\log (2M))^{1/\gamma_0}},\bigg(\frac{\sqrt{V}}{b}\bigg)^{\gamma_0/(2+\gamma_0)}
\bigg\}.
\end{align}
\end{lemma}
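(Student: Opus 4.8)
The plan is a truncation argument reducing the estimate to Bennett's inequality (Lemma~\ref{Bennett}), with the cutoff chosen so that the two terms produced by Bennett exactly match the two entries of the minimum in \eqref{ConditionOnr}.

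First I would dispose of the trivial regime: if $r^2/(10V)\le\log 3$ then $3\exp(-r^2/(10V))\ge1$ and there is nothing to prove, so henceforth $r\ge\sqrt{10\log 3}\,\sqrt V$; this explicit lower bound on $r/\sqrt V$ is precisely what is needed to absorb the centering error later. Next I fix the truncation level $A$ by $A^{\gamma_0}:=b^{\gamma_0}\bigl(\log(2M)+\tfrac{r^2}{4V}\bigr)$ and set $Y_i:=X_i\,\chi_{|X_i|\le A}$. Markov's inequality applied to $\exp(|X_i|^{\gamma_0}/b^{\gamma_0})$ together with a union bound shows that the exceptional event $E^c:=\{\,|X_i|>A\text{ for some }i\,\}$ has probability at most $2M\,e^{-A^{\gamma_0}/b^{\gamma_0}}=e^{-r^2/(4V)}\le e^{-r^2/(10V)}$, while on $E$ one has $\sum_i X_i=\sum_i Y_i$. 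Hence it suffices to bound $\mathbb{P}\bigl[\,|\sum_i Y_i|\ge r\,\bigr]$.

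Then I would write $\sum_i Y_i=\sum_i(Y_i-\mathbb{E}Y_i)-\sum_i\mu_i$, $\mu_i:=\mathbb{E}[X_i\,\chi_{|X_i|>A}]$, and control the deterministic correction by Cauchy--Schwarz in two stages, using $\mathbb{E}[X_i]=0$ (so $\mathbb{E}[X_i^2]=\Var X_i$) and $\sum_i\Var X_i=\Var X\le V$:
\begin{align*}
\sum_i|\mu_i|
&\le\sum_i\sqrt{\mathbb{E}[X_i^2]}\,\sqrt{\mathbb{P}[\,|X_i|>A\,]}\\
&\le\sqrt2\,e^{-A^{\gamma_0}/(2b^{\gamma_0})}\,\sqrt M\,\Bigl(\sum_i\mathbb{E}[X_i^2]\Bigr)^{1/2}\le\sqrt V\,e^{-r^2/(8V)}.
\end{align*}
In the nontrivial regime $r\ge\sqrt{10\log 3}\,\sqrt V$ one has $\sqrt V\,e^{-r^2/(8V)}\le r/10$ (an elementary one-variable inequality in $u=r/\sqrt V$), so $|\sum_i Y_i|\ge r$ forces $|\sum_i(Y_i-\mathbb{E}Y_i)|\ge\tfrac9{10}r$. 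Now I apply the simplified form of Bennett's inequality (once to $\{Y_i-\mathbb{E}Y_i\}$ and once to $\{-(Y_i-\mathbb{E}Y_i)\}$ for the two-sided bound) to these bounded, centered, independent variables, which satisfy $|Y_i-\mathbb{E}Y_i|\le2A$ and $\sum_i\Var(Y_i-\mathbb{E}Y_i)\le\sum_i\mathbb{E}[X_i^2]\le V$. This yields $\mathbb{P}\bigl[\,|\sum_i(Y_i-\mathbb{E}Y_i)|\ge\tfrac9{10}r\,\bigr]\le2\exp\bigl(-\min\{\tfrac{(9r/10)^2}{3V},\tfrac{9r/10}{6A}\}\bigr)$. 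Combined with $\mathbb{P}[E^c]\le e^{-r^2/(10V)}$, the claim follows with the stated constant $3$, provided the minimum above is $\ge r^2/(10V)$: the Gaussian term is $\ge0.27\,r^2/V$ automatically, and the linear term is $\ge r^2/(10V)$ precisely when $rA\le\tfrac32V$.

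The main obstacle is the verification of $rA\le\tfrac32 V$, which is exactly where the two-term structure of \eqref{ConditionOnr} enters. One splits according to which of $\log(2M)$ and $r^2/(4V)$ dominates in the definition of $A$. If $\log(2M)$ dominates, then $A\le b(2\log(2M))^{1/\gamma_0}$ and the bound reduces to the first entry $r\le \sqrt V\cdot\sqrt V/(b(2\log(2M))^{1/\gamma_0})$ of the minimum. If $r^2/(4V)$ dominates, then $A\le b(r^2/(2V))^{1/\gamma_0}$ and, after the exponent identity $\tfrac12+\tfrac{\gamma_0}{2(\gamma_0+2)}=\tfrac{\gamma_0+1}{\gamma_0+2}$, the bound reduces (up to an absolute constant $\ge1$, which is harmless) to the second entry $r\le\sqrt V\,(\sqrt V/b)^{\gamma_0/(\gamma_0+2)}$. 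One also has to check that the cutoff is high enough that $\sum_i|\mu_i|\le r/10$, which is why the reduction to the regime $r\ge\sqrt{10\log 3}\,\sqrt V$ is made at the outset; everything else (the elementary inequality for $u=r/\sqrt V$, the arithmetic with exponents, the two-sided union bound in Bennett) is routine.
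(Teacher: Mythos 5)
Your proof is correct and rests on the same mechanism as the paper's: truncate each $X_i$ at some level $A$, apply Bennett's inequality to the centered bulk part, and control the tail event and the centering correction separately. The difference is the choice of truncation level. The paper sets $A:=V/r$, which makes the linear Bennett term $r/(3A)=r^2/(3V)$ coincide with the Gaussian term automatically; the two entries of the minimum in \eqref{ConditionOnr} then emerge from two auxiliary conditions (one ensuring the tail event $\{\exists i:|X_i|>A\}$ is small, one ensuring that smallness is at least $e^{-r^2/(2V)}$). You instead reverse-engineer $A$ from the desired tail bound, taking $A^{\gamma_0}:=b^{\gamma_0}(\log(2M)+r^2/(4V))$ so that $\mathbb{P}[E^c]=e^{-r^2/(4V)}$ on the nose, and the two entries of the minimum then emerge from the single constraint $rA\le\tfrac32 V$ by a case split on which summand in $A^{\gamma_0}$ dominates. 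Your route has the advantage of being more explicit about the centering: you isolate the trivial regime $r\lesssim\sqrt V$ at the outset, bound $\sum_i|\mathbb{E}[X_i\chi_{|X_i|>A}]|$ uniformly by $\sqrt V\,e^{-r^2/(8V)}\le r/10$ there, and then run Bennett cleanly with the $9r/10$ threshold. The paper's route is terser and treats the centering essentially the same way but leaves the small-$r$ case distinction implicit. Both arguments deliver the stated constants; yours is a bit more self-contained, the paper's a bit shorter.
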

\begin{proof}
Fixing $A>0$, we split each random variable $X_i$ according to
\begin{align*}
X_i=X_i^{bulk}+X_i^{tail}
\end{align*}
with
\begin{align*}
X_i^{bulk}:&=X_i \chi_{|X_i|\leq A},
\\
X_i^{tail}:&=X_i \chi_{|X_i|> A}.
\end{align*}
By Lemma~\ref{Bennett}, we get
\begin{align*}
\mathbb{P}\bigg[\bigg|\sum_{i=1}^M (X_i^{bulk}-\mathbb{E}[X_i^{bulk}])\bigg| \geq r\bigg]
\leq
2\exp\bigg(-\min\bigg\{\frac{r^2}{3\Var X},\frac{r}{3A}\bigg\} \bigg).
\end{align*}
Furthermore, we have the bound
\begin{align*}
\mathbb{P}\bigg[\exists i:X_i^{tail}\neq 0\bigg]
\leq 2M \exp\bigg(-\frac{A^{\gamma_0} }{b^{\gamma_0}}\bigg).
\end{align*}
Choosing $A:=\frac{V}{r}$ for some $V\geq \Var X$, we deduce
\begin{align}
\label{BoundLemmaiidSumTailBoundAlmost}
\mathbb{P}\bigg[\bigg|\sum_{i=1}^M (X_i-\mathbb{E}[X_i^{bulk}])\bigg|\geq r\bigg]
\leq
3\exp\bigg(-\frac{r^2}{3V} \bigg)
\end{align}
as long as $\log (2M)\leq \frac{A^{\gamma_0}}{2b^{\gamma_0}}=\frac{V^{\gamma_0}}{2(rb)^{\gamma_0}}$
and
\begin{align*}
-\frac{V^{\gamma_0}}{2r^{\gamma_0} b^{\gamma_0}}
\leq -\frac{r^2}{2V}.
\end{align*}
Note that the latter condition may be rewritten as $r^{2+\gamma_0} b^{\gamma_0} \leq V^{\gamma_0+1}$ which is satisfied in case \eqref{ConditionOnr}, while the former condition is
\begin{align*}
r\leq  \frac{V}{b (2\log (2M))^{1/\gamma_0}}
\end{align*}
which is also satisfied under our assumption \eqref{ConditionOnr}.

From the same estimates, we also deduce
\begin{align*}
|\mathbb{E}[X_i^{bulk}]| &= |\mathbb{E}[X_i^{tail}]| \leq \mathbb{E}[|X_i^{tail}|^2]^{1/2} \mathbb{E}[\chi_{|X_i|>A}]^{1/2}
\\&
\leq \sqrt{2\Var X}\exp\bigg(-\frac{A^{\gamma_0}}{2b^{\gamma_0}}\bigg)
\\&
\leq \sqrt{2V} \exp\bigg(-\frac{r^2}{2V}\bigg).
\end{align*}
Plugging this estimate into \eqref{BoundLemmaiidSumTailBoundAlmost} and estimating gives the desired result.
\end{proof}

For the sum of independent random variables, each of which is approximately a multivariate Gaussian, the following simple concentration estimate towards a Gaussian holds true.

\begin{lemma}
\label{ConcentrationCloseToGaussian}
Let $X_1,\ldots,X_M$ be independent $\mathbb{R}^N$-valued random variables with zero mean and uniformly bounded stretched exponential moments
\begin{align*}
\mathbb{E}\bigg[\exp\bigg(\frac{|X_i|^{\gamma_0}}{b^{\gamma_0}}\bigg)\bigg]\leq 2
\end{align*}
for some $\gamma_0>0$ and some $b>0$. Let $0<\tau\leq \frac{1}{2}$ and suppose that the probability distribution of each $X_m$ is close to a Gaussian $\mathcal{N}_{\Lambda_m}$ with $||\mathcal{N}_{\Lambda_m}||_{\exp^{\gamma_0}}\leq b$ in the $1$-Wasserstein distance
\begin{align*}
\mathcal{W}_1(X_m,\mathcal{N}_{\Lambda_m})\leq \tau b.
\end{align*}
Then there exists a probability space and random variables $Y$ and $Z$ such that the law of the sum
\begin{align*}
X:=\sum_{m=1}^M X_m
\end{align*}
coincides with the law of the sum
\begin{align*}
Y+Z,
\end{align*}
where $Y$ is a multivariate Gaussian with covariance matrix $\Lambda:=\sum_{m=1}^M \Lambda_m$ and where $Z$ is a random variable subject to the estimate
\begin{align*}
\mathbb{P}[|Z|\geq r]
\leq
3N \exp\bigg(-\frac{r^2}{10 N \tau |\log \tau|^{1/\gamma_0} M b^2} \bigg)
\end{align*}
for any
\begin{align*}
r\leq \sqrt{N \tau |\log \tau|^{1/\gamma_0}} \sqrt{M} b \min\bigg\{\frac{\sqrt{M \tau |\log \tau|^{1/\gamma_0}}}{(2\log (2M))^{1/\gamma_0}},\big(\tau |\log \tau|^{1/\gamma_0} M\big)^{\gamma_0/(4+2\gamma_0)}
\bigg\}.
\end{align*}
\end{lemma}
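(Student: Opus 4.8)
The plan is to build the pair $(Y,Z)$ by coupling each summand $X_m$ with a Gaussian and letting $Z$ collect the residuals. First I would invoke the hypothesis $\mathcal{W}_1(X_m,\mathcal{N}_{\Lambda_m})\le\tau b$: since both laws have finite first moment, an optimal coupling for the $1$-Wasserstein distance on $\mathbb{R}^N$ exists, so there is a probability space carrying a pair $(X_m,G_m)$ with $X_m$ of the prescribed law, $G_m\sim\mathcal{N}_{\Lambda_m}$, and $\mathbb{E}[|X_m-G_m|]\le\tau b$. Taking the product of these $M$ spaces (so that the pairs $(X_m,G_m)$ are mutually independent) and setting
\[
Y:=\sum_{m=1}^M G_m,\qquad Z:=\sum_{m=1}^M (X_m-G_m),
\]
one has $X=Y+Z$ pointwise, hence the law of $X$ equals the law of $Y+Z$; and since the $G_m$ are independent centered Gaussians, $Y$ is a centered Gaussian with covariance $\sum_m\Lambda_m=\Lambda$.

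\textbf{Moment bounds on the residuals.} Put $Z_m:=X_m-G_m$. These are independent and centered ($\mathbb{E}[X_m]=\mathbb{E}[G_m]=0$), with $\mathbb{E}[|Z_m|]\le\tau b$, and by the triangle inequality for $\|\cdot\|_{\exp^{\gamma_0}}$ together with the hypotheses they satisfy $\|Z_m\|_{\exp^{\gamma_0}}\le Cb$, equivalently $\mathbb{E}[\exp(|Z_m|^{\gamma_0}/(Cb)^{\gamma_0})]\le2$ after adjusting the constant. To control $\mathbb{E}[|Z_m|^2]=\int_0^\infty 2r\,\mathbb{P}[|Z_m|>r]\,dr$ I would split the integral at $T:=Cb(1+|\log\tau|)^{1/\gamma_0}$: on $[0,T]$ use Markov, $\mathbb{P}[|Z_m|>r]\le\tau b/r$, contributing $\le2T\tau b$; on $[T,\infty)$ use the stretched-exponential tail $\mathbb{P}[|Z_m|>r]\le2\exp(-(r/(Cb))^{\gamma_0})$, which by the choice of $T$ is $\lesssim\tau\exp(-(r/(2Cb))^{\gamma_0})$ for $r\ge T$ and integrates to $\lesssim\tau b^2$. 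Hence $\mathbb{E}[|Z_m|^2]\lesssim\tau|\log\tau|^{1/\gamma_0}b^2$ (absorbing the additive $1$ into $|\log\tau|$ since $\tau\le\frac12$), so that, writing $Z=(Z^{(1)},\dots,Z^{(N)})$, each coordinate has $\Var Z^{(j)}=\sum_m\Var Z_m^{(j)}\le\sum_m\mathbb{E}[|Z_m|^2]\le V$ with $V$ of order $M\tau|\log\tau|^{1/\gamma_0}b^2$.

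\textbf{Concentration and union bound.} Each $Z^{(j)}=\sum_m Z_m^{(j)}$ is a sum of independent centered scalars with $\|Z_m^{(j)}\|_{\exp^{\gamma_0}}\le\|Z_m\|_{\exp^{\gamma_0}}\le Cb$ and total variance $\le V$, so Lemma~\ref{iidSumTailBound} (with $b$ replaced by $Cb$ and with this $V$) yields $\mathbb{P}[|Z^{(j)}|\ge\rho]\le3\exp(-\rho^2/(10V))$ for all $\rho$ in the range governed by \eqref{ConditionOnr}. Using $\{|Z|\ge r\}\subset\bigcup_{j=1}^N\{|Z^{(j)}|\ge r/\sqrt N\}$ and summing over $j$ gives
\[
\mathbb{P}[|Z|\ge r]\le3N\exp\!\Big(-\frac{r^2}{10NV}\Big),
\]
and substituting the value of $V$ into \eqref{ConditionOnr} (with $\rho=r/\sqrt N$) translates the admissibility condition into the stated upper bound on $r$; a final adjustment of the absolute constants to the normalizations of \eqref{ConditionOnr} yields the assertion of Lemma~\ref{ConcentrationCloseToGaussian}.

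\textbf{Main obstacle.} The only genuinely quantitative point is the second-moment estimate in the middle step: one must combine the first-moment bound $\mathbb{E}[|Z_m|]\le\tau b$ with the stretched-exponential tail to produce the sharp factor $\tau|\log\tau|^{1/\gamma_0}$, which dictates the truncation level $T\sim b|\log\tau|^{1/\gamma_0}$; reconciling this truncation with the admissible range in \eqref{ConditionOnr} so as to recover exactly the stated range for $r$ requires some bookkeeping of constants but introduces no new idea. Everything else — existence of optimal $\mathcal{W}_1$ couplings, independence of the residuals, the coordinatewise reduction — is routine.
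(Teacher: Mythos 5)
Your proposal is correct and matches the paper's proof step for step: construct independent Wasserstein couplings $(X_m,G_m)$, observe that the residuals $Z_m:=X_m-G_m$ are independent, centered, with $\|Z_m\|_{\exp^{\gamma_0}}\lesssim b$ and $\mathbb{E}[|Z_m|]\le\tau b$, deduce $\mathbb{E}[|Z_m|^2]\lesssim\tau\,|\log\tau|^{1/\gamma_0}\,b^2$ by truncating at scale $\sim b|\log\tau|^{1/\gamma_0}$, and then apply Lemma~\ref{iidSumTailBound} to each coordinate of $Z=\sum_m Z_m$ with $V\sim M\tau|\log\tau|^{1/\gamma_0}b^2$. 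The only (purely cosmetic) difference is in the second-moment step, where you integrate the layer-cake formula using Markov's inequality in the bulk, whereas the paper uses the direct split $\mathbb{E}[|Z_m|^2]\le A\,\mathbb{E}[|Z_m|]+\mathbb{E}[|Z_m|^2\chi_{|Z_m|\ge A}]$ with the same truncation level $A\sim b|\log\tau|^{1/\gamma_0}$.
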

\begin{proof}
By the stochastic independence of the $X_m$ and the definition of the $1$-Wasserstein distance $\mathcal{W}_1(X_m,\mathcal{N}_{\Lambda_m})$, there exists a probability space and independent triples of random variables $(\tilde X_m,Y_m,Z_m)$ with $\tilde X_m\stackrel{d}{=}X_m$ and
\begin{align*}
\tilde X_m = Y_m+Z_m
\end{align*}
where $Y_m$ is a Gaussian random variable with covariance matrix $\Lambda_m$ and where $Z_m$ satisfies
\begin{align*}
\mathbb{E}[|Z_m|] = \mathcal{W}_1(X_m,\mathcal{N}_{\Lambda_m}).
\end{align*}
By writing $Z_m=\tilde X_m-Y_m$, we deduce using Lemma~\ref{LemmaStretchedExponentialQuasinorm}
\begin{align*}
||Z_m||_{\exp^{\gamma_0}} \leq ||X_m||_{\exp^{\gamma_0}} + ||Y_m||_{\exp^2}
\leq Cb+Cb
\end{align*}
and therefore for $A$ with $z^2/b^2 \leq \exp(z^{\gamma_0}/2b^{\gamma_0})$ for all $z\geq A$ (using again Lemma~\ref{LemmaStretchedExponentialQuasinorm})
\begin{align*}
\mathbb{E}[|Z_m|^2] &\leq A \mathbb{E}[|Z_m|] + \mathbb{E}[|Z_m|^2 \chi_{|Z_m|\geq A}]
\\&
\leq A \mathbb{E}[|Z_m|] + b^2 \exp(-A^{\gamma_0}/2b^{\gamma_0}) \mathbb{E}[\exp(|Z_m|^{\gamma_0}/b^{\gamma_0})]
\\&
\leq A \cdot Cb \tau + 2 b^2 \exp(-A^{\gamma_0}/2b^{\gamma_0}).
\end{align*}
Choosing $A:=C b |\log \tau|^{1/\gamma_0}$ (which satisfies the previous assumption) we get
\begin{align*}
\mathbb{E}[|Z_m|^2]
\leq C \tau b^2 |\log \tau|^{1/\gamma_0}.
\end{align*}

As the sum of independent Gaussian random variables is again a Gaussian random variable, we get that
\begin{align*}
Y=\sum_{m=1}^M Y_m
\end{align*}
is a multivariate Gaussian with covariance matrix $\Lambda=\sum_{m=1}^M \Lambda_m$. On the other hand, applying Lemma~\ref{iidSumTailBound} (note that Lemma~\ref{iidSumTailBound} applies to scalar-valued random variables) to each component of
\begin{align*}
Z:=\sum_{m=1}^M Z_m
\end{align*}
with the choice $V:=\sqrt{N} M\tau b^2 |\log \tau|^{1/\gamma_0}$,
we obtain
\begin{align*}
\mathbb{P}[|Z|\geq r]
\leq
3N \exp\bigg(-\frac{r^2}{10 N \tau |\log \tau|^{1/\gamma_0} M b^2} \bigg)
\end{align*}
for any
\begin{align*}
r\leq \sqrt{N \tau |\log \tau|^{1/\gamma_0}} \sqrt{M} b \min\bigg\{\frac{\sqrt{M \tau |\log \tau|^{1/\gamma_0}}}{(2\log (2M))^{1/\gamma_0}},\big(\tau |\log \tau|^{1/\gamma_0} M\big)^{\gamma_0/(4+2\gamma_0)}
\bigg\}.
\end{align*}
\end{proof}

\section{Calculus for random variables with stretched exponential moments}

\label{AppendixStretchedExponential}
Throughout this paper, we have equipped the space of random variables $X$ with stretched exponential moments in the sense
\begin{align*}
\mathbb{E}\bigg[\exp\bigg(\frac{|X|^\gamma}{C}\bigg)\bigg]\leq 2
\end{align*}
for some $\gamma>0$ and some $C>0$ with the norm
\begin{align*}
||X||_{\exp^\gamma} := \sup_{p\geq 1} \frac{1}{p^{1/\gamma}} \mathbb{E}\big[|X|^p\big]^{1/p}.
\end{align*}
In the setting of exponential or higher moments $\gamma\geq 1$, this norm is equivalent to the Luxemburg norm associated with the convex function $\exp(x^\gamma)-1$. However, it has two advantages over the Luxemburg norm: First, it simplifies calculus when considering the integrability of products of random variables or the concentration properties of independent random variables. Secondly and more importantly, it is also a well-defined norm on the space of random variables $X$ with subexponential stretched exponential moments $\gamma\in (0,1)$.
\begin{lemma}
\label{LemmaStretchedExponentialQuasinorm}
Let $\gamma>0$.
Consider a random variable $X$ on some probability space. Define the quasinorm
\begin{align*}
||X||_{\exp^\gamma,\operatorname{quasi}}:=
\inf\bigg\{s>0:
\mathbb{E}\bigg[\exp\bigg(\frac{|X|^\gamma}{s^\gamma}\bigg)\bigg]\leq 2
\bigg\}.
\end{align*}
Then we have $||X||_{\exp^\gamma,\operatorname{quasi}}<\infty$ if and only if $||X||_{\exp^\gamma}<\infty$ and there exist constants $c(\gamma),C(\gamma)$ such that the estimate
\begin{align*}
c(\gamma) ||X||_{\exp^\gamma}
\leq ||X||_{\exp^\gamma,\operatorname{quasi}}
\leq C(\gamma) ||X||_{\exp^\gamma}
\end{align*}
is satisfied.
\end{lemma}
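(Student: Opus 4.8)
The plan is to establish the two one-sided inequalities separately by elementary moment / exponential-integrability estimates; the stated equivalence of finiteness then follows at once, since each inequality shows that finiteness of one quantity forces finiteness of the other.

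\textbf{Step 1: the bound $\|X\|_{\exp^\gamma}\le C(\gamma)\,\|X\|_{\exp^\gamma,\operatorname{quasi}}$.} Assume the right-hand side is finite and write $s:=\|X\|_{\exp^\gamma,\operatorname{quasi}}$, so that $\mathbb{E}[\exp(|X|^\gamma/s^\gamma)]\le 2$. For fixed $p\ge 1$ I would use the elementary inequality $t^{a}\le (a/e)^{a}e^{t}$, valid for $t\ge 0$ and $a>0$ (it follows from maximizing $t\mapsto t^{a}e^{-t}$), applied with $a=p/\gamma$ and $t=|X|^\gamma/s^\gamma$; multiplying through by $s^{p}$ gives $|X|^{p}\le s^{p}(p/(e\gamma))^{p/\gamma}\exp(|X|^\gamma/s^\gamma)$. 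Taking expectations and then $p$-th roots yields $\mathbb{E}[|X|^{p}]^{1/p}\le 2^{1/p}(e\gamma)^{-1/\gamma}s\,p^{1/\gamma}\le 2(e\gamma)^{-1/\gamma}s\,p^{1/\gamma}$; dividing by $p^{1/\gamma}$ and taking the supremum over $p\ge 1$ gives the claim with $C(\gamma)=2(e\gamma)^{-1/\gamma}$.

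\textbf{Step 2: the bound $\|X\|_{\exp^\gamma,\operatorname{quasi}}\le C(\gamma)\,\|X\|_{\exp^\gamma}$.} Assume $M:=\|X\|_{\exp^\gamma}<\infty$, so that $\mathbb{E}[|X|^{p}]\le M^{p}p^{p/\gamma}$ for all $p\ge 1$. I would expand the exponential as $\mathbb{E}[\exp(|X|^\gamma/s^\gamma)]=1+\sum_{k\ge 1}\tfrac{1}{k!}s^{-\gamma k}\mathbb{E}[|X|^{\gamma k}]$ and estimate term by term. For indices $k$ with $\gamma k\ge 1$ the moment bound gives $\mathbb{E}[|X|^{\gamma k}]\le M^{\gamma k}(\gamma k)^{k}$, and combining this with $k!\ge (k/e)^{k}$ bounds the $k$-th term by $(e\gamma M^\gamma/s^\gamma)^{k}$; for the finitely many indices $k<1/\gamma$ (which occur only when $\gamma<1$) the monotonicity of $q\mapsto\mathbb{E}[|X|^{q}]^{1/q}$ together with $\mathbb{E}[|X|]\le M$ gives $\mathbb{E}[|X|^{\gamma k}]\le M^{\gamma k}$, so the $k$-th term is at most $(M^\gamma/s^\gamma)^{k}/k!$. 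Choosing $s^\gamma=C_0(\gamma)M^\gamma$ with $C_0(\gamma)$ large enough (depending on $\gamma$ only through $\lceil 1/\gamma\rceil$) makes both the geometric tail and the finitely many exceptional terms sum to at most $1$, hence $\mathbb{E}[\exp(|X|^\gamma/s^\gamma)]\le 2$ and $\|X\|_{\exp^\gamma,\operatorname{quasi}}\le s=C_0(\gamma)^{1/\gamma}M$.

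The only point that requires any care is the separate treatment in Step 2 of the low-order terms $k<1/\gamma$ in the subexponential regime $\gamma<1$, where the naive geometric bound on the series does not cover every term; this is pure bookkeeping, and no genuine difficulty is anticipated.
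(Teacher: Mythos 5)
Your proof is correct. Note that the paper omits the proof of this lemma entirely, referring the reader to the companion article \cite{FischerVarianceReduction}, so there is no in-paper argument to compare against; what you give is the standard route for equating a moment-growth norm with an Orlicz-type quasinorm. Step 1 via the elementary bound $t^a\le(a/e)^a e^t$ and Step 2 via the termwise expansion of $\mathbb{E}[\exp(|X|^\gamma/s^\gamma)]$ combined with $k!\ge(k/e)^k$ and the moment bound $\mathbb{E}[|X|^{p}]\le M^p p^{p/\gamma}$ are exactly the expected ingredients, and your separate treatment via Jensen of the finitely many indices $k<1/\gamma$ in the subexponential case $\gamma<1$ is the one genuinely delicate point and you handle it correctly. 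One small remark on Step 1: when you set $s:=\|X\|_{\exp^\gamma,\operatorname{quasi}}$ and assert $\mathbb{E}[\exp(|X|^\gamma/s^\gamma)]\le 2$, you are implicitly using that the infimum defining the quasinorm is attained. This is true (the set $\{s>0:\mathbb{E}[\exp(|X|^\gamma/s^\gamma)]\le 2\}$ is closed, by monotone convergence as $s$ decreases to its infimum), but it deserves a word; alternatively one can run the argument with $s(1+\epsilon)$ and let $\epsilon\downarrow 0$, which avoids the point altogether.
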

We omit the (elementary) proof of this lemma and the next lemma; the proofs may be found in the companion article \cite{FischerVarianceReduction}.
\begin{lemma}[Calculus for random variables with stretched exponential moments]
\label{CalculusStretchedExponential}
Let $X$, $Y$ be random variables with stretched exponential moments in the sense $||X||_{\exp^\gamma}<\infty$ and $||Y||_{\exp^\beta}<\infty$ for some $\gamma,\beta>0$.
\begin{itemize}
\item[a)] The product $XY$ has stretched exponential moments with exponent $\alpha$ given by $\frac{1}{\alpha}=\frac{1}{\gamma}+\frac{1}{\beta}$ and satisfies the bound
\begin{align*}
||XY||_{\exp^\alpha}\leq C(\beta,\gamma) ||X||_{\exp^\gamma} ||Y||_{\exp^\beta}.
\end{align*}
\item[b)] There exists constants $c=c(\gamma)>0$, $C=C(\gamma)<\infty$, with the following property: For any $K\geq 0$, we have the estimate
\begin{align*}
\mathbb{P}\big[|X|\geq K||X||_{\exp^\gamma}\big] \leq C\exp(-c K^\gamma).
\end{align*}
\end{itemize}
\end{lemma}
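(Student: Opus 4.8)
The plan is to derive both parts directly from the definition $||X||_{\exp^\gamma}=\sup_{p\geq 1}p^{-1/\gamma}\mathbb{E}[|X|^p]^{1/p}$, using only H\"older's and Markov's inequalities; note that under the hypotheses all moments of $X$ and $Y$ of order $\geq 1$ are finite, so these manipulations are legitimate.

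For part a), the key step is a single application of H\"older's inequality to $\mathbb{E}[|XY|^p]=\mathbb{E}[|X|^p|Y|^p]$, $p\geq 1$, with the exponents $r:=\gamma/\alpha$ and $r':=\beta/\alpha$. These are conjugate, since $\frac1r+\frac1{r'}=\frac\alpha\gamma+\frac\alpha\beta=\alpha(\frac1\gamma+\frac1\beta)=1$, and both moment orders $pr=p\gamma/\alpha=p(1+\gamma/\beta)$ and $pr'=p\beta/\alpha=p(1+\beta/\gamma)$ are $\geq 1$ because $p\geq 1$; this is the one point that genuinely needs checking, so that the defining bound of the norm may be invoked on each factor. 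H\"older then gives $\mathbb{E}[|XY|^p]\leq\mathbb{E}[|X|^{p\gamma/\alpha}]^{\alpha/\gamma}\mathbb{E}[|Y|^{p\beta/\alpha}]^{\alpha/\beta}$, and since $\mathbb{E}[|X|^{p\gamma/\alpha}]^{\alpha/\gamma}=\big(\mathbb{E}[|X|^{p\gamma/\alpha}]^{1/(p\gamma/\alpha)}\big)^{p}\leq (p\gamma/\alpha)^{p/\gamma}||X||_{\exp^\gamma}^{p}$ and likewise for $Y$, taking the $p$-th root yields $\mathbb{E}[|XY|^p]^{1/p}\leq(p\gamma/\alpha)^{1/\gamma}(p\beta/\alpha)^{1/\beta}||X||_{\exp^\gamma}||Y||_{\exp^\beta}=p^{1/\alpha}\,C(\beta,\gamma)\,||X||_{\exp^\gamma}||Y||_{\exp^\beta}$ with $C(\beta,\gamma):=(\gamma/\alpha)^{1/\gamma}(\beta/\alpha)^{1/\beta}$, where $\alpha$ is fixed by $\frac1\alpha=\frac1\gamma+\frac1\beta$. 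Dividing by $p^{1/\alpha}$ and taking the supremum over $p\geq 1$ gives the asserted estimate.

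For part b), write $m:=||X||_{\exp^\gamma}$ and apply Markov's inequality at moment order $p\geq 1$: $\mathbb{P}[|X|\geq Km]\leq(Km)^{-p}\mathbb{E}[|X|^p]\leq(Km)^{-p}(p^{1/\gamma}m)^p=(p^{1/\gamma}/K)^p$. The remaining work is the optimization in $p$. The function $p\mapsto p\big(\tfrac1\gamma\log p-\log K\big)$ is minimized at $p^\ast=e^{-1}K^\gamma$, with minimal value $-\tfrac{1}{e\gamma}K^\gamma$; hence for $K\geq e^{1/\gamma}$ (so that $p^\ast\geq 1$ and the choice is admissible) one gets $\mathbb{P}[|X|\geq Km]\leq\exp\big(-\tfrac{1}{e\gamma}K^\gamma\big)$. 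For $K<e^{1/\gamma}$ the stated bound is vacuous once the prefactor is large enough: since then $K^\gamma<e$, one has $C\exp(-cK^\gamma)\geq Ce^{-1/\gamma}\geq 1$ provided $C\geq e^{1/\gamma}$, and enlarging $C$ does not affect the range $K\geq e^{1/\gamma}$. Thus $c:=\tfrac{1}{e\gamma}$ and $C:=e^{1/\gamma}$ work. (Alternatively, part b) follows in one line from Lemma~\ref{LemmaStretchedExponentialQuasinorm}: with $s:=||X||_{\exp^\gamma,\operatorname{quasi}}\leq C(\gamma)m$ one has $\mathbb{E}[\exp(|X|^\gamma/s^\gamma)]\leq 2$, and Markov applied to $\exp(|X|^\gamma/s^\gamma)$ gives $\mathbb{P}[|X|\geq Km]\leq 2\exp(-(Km)^\gamma/s^\gamma)\leq 2\exp(-K^\gamma/C(\gamma)^\gamma)$.)

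Neither part poses a real obstacle: the only delicate points are verifying that the H\"older exponents in a) keep the moment orders $\geq 1$, and, in b), the elementary two-regime split around $K=e^{1/\gamma}$ where the optimal moment order would otherwise drop below $1$. The constants that emerge are explicit and depend only on $\gamma$ (and additionally on $\beta$ in part a)), as the statement requires.
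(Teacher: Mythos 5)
Your proof is correct. The paper itself omits the argument entirely, deferring to the companion article \cite{FischerVarianceReduction}, so there is no in-paper proof to compare against; your derivation directly from the definition $\|X\|_{\exp^\gamma}=\sup_{p\ge 1}p^{-1/\gamma}\mathbb{E}[|X|^p]^{1/p}$ fills that gap cleanly. In part a) the choice of conjugate H\"older exponents $r=\gamma/\alpha$, $r'=\beta/\alpha$ is exactly what the definition of $\alpha$ asks for, and you correctly verify $pr,pr'\ge 1$ (indeed $\gamma/\alpha=1+\gamma/\beta$ and $\beta/\alpha=1+\beta/\gamma$), which is the one point where the argument could have silently broken. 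In part b) the Markov bound $(p^{1/\gamma}/K)^p$ with the optimizer $p^\ast=K^\gamma/e$ gives the minimum value $\exp(-K^\gamma/(e\gamma))$, the admissibility threshold $K\ge e^{1/\gamma}$ for $p^\ast\ge 1$ is correctly identified, and the prefactor $C=e^{1/\gamma}$ makes the bound vacuous below that threshold; the alternative one-liner via Lemma~\ref{LemmaStretchedExponentialQuasinorm} and Markov applied to $\exp(|X|^\gamma/s^\gamma)$ is also valid and arguably closer in spirit to what the companion article likely does.
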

For independent random variables with stretched exponential moments, the following simple concentration estimate holds.
Again, we omit the proof and refer the reader to \cite{FischerVarianceReduction}.
Note that the result is not optimal, see e.\,g.\  \cite{BoucheronLugosiMassart} for a stronger statement.
\begin{lemma}
\label{ConcentrationStretchedExponential}
Let $X_1,\ldots,X_M$ be independent random variables with vanishing expectation and uniformly bounded stretched exponential moments
\begin{align*}
||X_m||_{\exp^{\gamma_0}}\leq b
\end{align*}
for some $\gamma_0>0$ and some $b>0$. Then the sum
\begin{align*}
X:=\sum_{m=1}^M X_m
\end{align*}
has uniformly bounded stretched exponential moments
\begin{align*}
||X||_{\exp^{\tilde \gamma}} \leq C(\gamma_0) \sqrt{M} b
\end{align*}
for $\tilde \gamma := \gamma_0/(\gamma_0+1)$.
\end{lemma}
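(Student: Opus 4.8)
The plan is to argue directly at the level of moments, exploiting that by its very definition $\|Z\|_{\exp^\gamma}\le b$ is equivalent to $\mathbb{E}[|Z|^p]^{1/p}\le b\,p^{1/\gamma}$ for all $p\ge1$. Thus the hypothesis supplies $\mathbb{E}[|X_m|^p]^{1/p}\le b\,p^{1/\gamma_0}$ for every $p\ge1$ and every $m$, and in particular $\mathbb{E}[X_m^2]\le 4^{1/\gamma_0}b^2$. Since $1+\tfrac1{\gamma_0}=\tfrac1{\tilde\gamma}$, the goal reduces to proving $\mathbb{E}[|X|^p]^{1/p}\le C(\gamma_0)\sqrt M\,b\,p^{1/\tilde\gamma}$ for all $p\ge1$; taking the supremum over $p\ge1$ then yields the asserted norm bound.

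I would first dispose of the range $1\le p\le 2$: by monotonicity of the $L^p$ norms on a probability space and by independence with vanishing means, $\mathbb{E}[|X|^p]^{1/p}\le\mathbb{E}[|X|^2]^{1/2}=\big(\sum_{m=1}^M\mathbb{E}[X_m^2]\big)^{1/2}\le 2^{1/\gamma_0}\sqrt M\,b$, which is $\le C(\gamma_0)\sqrt M\,b\,p^{1/\tilde\gamma}$ because $p^{1/\tilde\gamma}\ge1$. For $p\ge2$ I would invoke Rosenthal's inequality for sums of independent mean-zero random variables in the form
\begin{align*}
\mathbb{E}\Big[\Big|\sum_{m=1}^M X_m\Big|^p\Big]^{1/p}
\le C\,p\,\Big[\Big(\sum_{m=1}^M\mathbb{E}[X_m^2]\Big)^{1/2}+\Big(\sum_{m=1}^M\mathbb{E}[|X_m|^p]\Big)^{1/p}\Big].
\end{align*}
Inserting the hypotheses, the first bracketed term is $\le 2^{1/\gamma_0}\sqrt M\,b$, while the second is $\big(\sum_m\mathbb{E}[|X_m|^p]\big)^{1/p}\le\big(M(b\,p^{1/\gamma_0})^p\big)^{1/p}=M^{1/p}b\,p^{1/\gamma_0}\le\sqrt M\,b\,p^{1/\gamma_0}$, using $M^{1/p}\le M^{1/2}$ for $p\ge2$. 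Hence $\mathbb{E}[|X|^p]^{1/p}\le C\,p\,\sqrt M\,b\,(2^{1/\gamma_0}+p^{1/\gamma_0})\le C(\gamma_0)\sqrt M\,b\,p^{1+1/\gamma_0}=C(\gamma_0)\sqrt M\,b\,p^{1/\tilde\gamma}$, as required. Combining the two ranges of $p$ finishes the proof.

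The only substantive input is Rosenthal's inequality with the stated dependence of the constant on $p$, and the linear-in-$p$ factor is precisely what produces the exponent shift from $\gamma_0$ to $\tilde\gamma=\gamma_0/(\gamma_0+1)$: the $L^2$-term carries no power of $p$ on its own, so after the factor $p$ it contributes $p^1$, whereas the $L^p$-term already carries $p^{1/\gamma_0}$, so after the factor $p$ it contributes $p^{1+1/\gamma_0}=p^{1/\tilde\gamma}$, which dominates. If one prefers not to cite Rosenthal, the crude constant $Cp$ (one does \emph{not} need the optimal $Cp/\log p$) can be obtained self-contained either by induction on even integers $p$, or by symmetrisation followed by the Marcinkiewicz--Zygmund inequality.

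As an alternative to the moment route altogether, one could run the truncation scheme of Lemma~\ref{iidSumTailBound}: split $X_m=X_m\chi_{|X_m|\le A}+X_m\chi_{|X_m|>A}$ at a level $A=A(r)$, apply Bennett's inequality (Lemma~\ref{Bennett}) to the bounded parts with $\sigma^2\le C(\gamma_0)Mb^2$, control the unbounded parts by a union bound together with Lemma~\ref{CalculusStretchedExponential}b, choose $A$ so that the resulting tail estimate for $|X|$ is $\le C\exp\big(-c\,(r/(\sqrt M b))^{\tilde\gamma}\big)$ for all $r\ge0$ (the regime $r\lesssim\sqrt M b$ being trivial), and finally convert this tail bound into the norm bound via the equivalence with the Luxemburg quasinorm from Lemma~\ref{LemmaStretchedExponentialQuasinorm}. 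I expect the main obstacle, whichever route is taken, to be bookkeeping the $p$-dependence (respectively the $r$-dependence of $A$) so that exactly the exponent $1/\tilde\gamma$ emerges; everything else is routine manipulation with the moment characterisation of $\|\cdot\|_{\exp^{\gamma}}$.
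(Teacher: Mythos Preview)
The paper does not actually give a proof of this lemma: it states ``Again, we omit the proof and refer the reader to \cite{FischerVarianceReduction},'' so there is nothing to compare against directly.

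Your argument via Rosenthal's inequality is correct and is one of the standard ways to obtain this estimate. The equivalence $\|Z\|_{\exp^\gamma}\le b \Leftrightarrow \mathbb{E}[|Z|^p]^{1/p}\le b\,p^{1/\gamma}$ for all $p\ge1$ is exactly the definition used in the paper, the treatment of $1\le p\le2$ by reducing to the variance is fine, and for $p\ge2$ the form of Rosenthal with constant $Cp$ (which is weaker than the optimal $Cp/\log p$) suffices and yields exactly the exponent $1+1/\gamma_0=1/\tilde\gamma$. Your bookkeeping of the two terms is accurate: the $L^p$-sum term dominates and produces the claimed exponent.

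The alternative truncation-plus-Bennett route you sketch is closer in spirit to the arguments the paper does write out (cf.\ Lemma~\ref{iidSumTailBound}, which uses precisely this scheme), and is likely what the companion article does. Either route is acceptable; the Rosenthal approach is shorter if one is willing to quote the inequality, while the truncation approach is more self-contained given the tools already assembled in the appendix.
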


\bibliographystyle{abbrv}
\bibliography{stochastic_homogenization}

\end{document}